\newtheorem{lemma}{Lemma}[section]
\newtheorem{theorem}[lemma]{Theorem}
\newtheorem{corollary}[lemma]{Corollary}
\newtheorem{proposition}[lemma]{Proposition}
\theoremstyle{definition}
\newtheorem{example}[lemma]{Example}
\newtheorem{definition}[lemma]{Definition}
\newtheorem{remark}[lemma]{Remark}
\numberwithin{equation}{section}
\renewcommand{\p@enumii}{}
\newcommand\wtsmash[1]{{\smash{\widetilde{#1}}\rlap{$\phantom{#1}$}}}
\newcommand{\DD}[1]{\mathbin{\frac{\rm d }{{\rm d }#1}}}
\newcommand{\ud}{\,{\mathrm d}}
\newcommand{\Div}{{\rm div\,}}
\newcommand{\Grad}{{\rm grad\,}}
\newcommand\R{{\mathbb R}}
\newcommand\C{{\mathbb C}}
\newcommand\K{{\mathbb K}}
\newcommand\rplus{{\R_{+}}}
\newcommand\cplus{{\C_{+}}}
\newcommand\zero{\set{0}}
\newcommand{\Hscr}{\mathcal H}
\newcommand{\Uscr}{\mathcal U}
\newcommand{\AB}{{A\& B}}
\newcommand{\CD}{{C\& D}}
\newcommand{\SysNode}{\bbm{\AB \cr \CD}}
\newcommand{\SmallSysNode}{\sbm{\AB \cr \CD}}
\newcommand{\dom}[1]{\mathrm{dom}\left(#1\right)}
\newcommand{\range}[1]{\mathrm{ran}\left(#1\right)}
\newcommand{\Ker}[1]{\ker\left(#1\right)}
\newcommand{\res}[1]{\rho\left(#1\right)}
\newcommand{\re}[0]{\mathrm{Re}\,}
\newcommand{\Ipdp}[2]{\left\langle #1 , #2 \right\rangle}
\newcommand{\set}[1]{\left\lbrace #1 \right\rbrace}
\newcommand{\bigmid}{\bigm\vert}
\newcommand{\biggmid}{\biggm\vert}
\newcommand{\bi}{\begin{itemize}}
\newcommand{\ei}{\end{itemize}}
\newcommand{\be}{\begin{enumerate}}
\newcommand{\ee}{\end{enumerate}}
\newcommand{\Dfrak}{\mathfrak D}
\newcommand{\sbm}[1]{\left[\begin{smallmatrix}#1\end{smallmatrix}\right]}
\newcommand{\bbm}[1]{\begin{bmatrix}#1\end{bmatrix}}
\def\etv{& \hskip-.3em\vrule\hskip-.3em &} 
\def\smalletv{&\vrule&} 
\def\smallcrh{\vrule height0pt depth2\ex@ width0pt
\cr\noalign{\hrule}
\vrule height6.5\ex@ depth0pt width0pt}
\newbox\smallstrutbox
\def\smallstrut{\relax\ifmmode\copy\smallstrutbox\else\unhcopy\smallstrutbox\fi}
\newenvironment{sysmatrix}{
\let\|=\etv
\hskip \matrixcolsep
\begin{matrix}}
{\end{matrix}
\hskip \matrixcolsep
}       
\newenvironment{smallsysmatrix}{\null\,\vcenter\bgroup
\let\|=\smalletv

\def\\{\smallstrut\math@cr}
\restore@math@cr\default@tag
\baselineskip\z@skip \lineskip\z@skip \lineskiplimit\lineskip
\ialign\bgroup\hfil$\m@th\scriptstyle##$\hfil&&\thickspace\hfil
$\m@th\scriptstyle##$\hfil\crcr
\crcr\noalign{\vskip -.3\ex@}%
}{\crcr\noalign{\vskip -.2\ex@}%
\crcr\egroup\egroup\,%
}
\begin{document}

\title{Feedback theory extended for proving generation of contraction semigroups}

\author{Mikael Kurula\footnote{{Corresponding author, {\tt mkurula@abo.fi}.}}\and Hans Zwart}

\maketitle

\begin{abstract} Recently, the following novel method for proving the
  existence of solutions for certain linear time-invariant PDEs was
  introduced: The operator associated to a given PDE is represented
  by a (larger) operator with an internal loop. If the
  larger operator (without the internal loop) generates a contraction
  semigroup, the internal loop is accretive, and some non-restrictive technical assumptions are
  fulfilled, then the original operator generates a contraction
  semigroup as well. Beginning with the undamped wave equation,
    this general idea can be applied to show that the heat
    equation and wave equations with damping are well-posed. 
In the present paper we show how this approach can benefit 
from feedback techniques and recent developments in well-posed systems theory, at the
same time generalising the previously known results. Among others, we
show how well-posedness of degenerate parabolic equations can be proved.
\end{abstract}

\noindent
{\bf MSC(2010):} Primary 93B52, 93C05; Secondary 93C20, 35F05

\noindent
{\bf Keywords:} Existence of solutions, output feedback, contraction semigroup, well-posed system

\section{Introduction}

It is now a very standard technique to use semigroup
  theory for showing existence and uniqueness of (linear) partial
  differential equations (PDEs). The general results available in
  semigroup theory enable us to conclude existence of solutions for
  many PDEs once this has been proved for one PDE. For instance, if
  the operator $A$ associated to a given PDE generates a
  $C_0$-semigroup, then we immediately have that for every bounded
  $Q$, also $A+Q$ generates a $C_0$-semigroup. Hence the PDE
  associated to $A+Q$ has a unique solution given an initial
  condition. Even hyperbolic and parabolic PDEs are linked in the
  semigroup setting, since $A^2$ generates an (analytic) semigroup
  whenever $A$ generates a $C_0$-group, see \cite[pp.\
  106--107]{EnNa00}. For contraction semigroups on Hilbert spaces this
  latter result was complemented in \cite{ParaHypConf}.

In \cite{ParaHypConf} it is shown that the existence of solutions of the heat equation,
\begin{equation}\label{eq:heatND}
\left\{
     \begin{aligned}
         \frac{\partial x}{\partial t}(\xi,t) &= \Div\big(\alpha(\xi)\,\Grad x(\xi,t)\big),
 	  \quad \xi\in\Omega,~t\geq0,\\
 	x(\xi,0) &= x_0(\xi),\quad \xi\in\Omega,\\   
 	x(\xi,t) &= 0,\quad \xi\in\partial\Omega,~t\geq0,
     \end{aligned}
   \right.
\end{equation}
can be directly linked to the existence of solutions of the undamped
wave equation
\begin{equation}\label{eq:waveintro}
  \left\{
    \begin{aligned}
   \frac{\partial}{\partial t} \bbm{x(\xi, t)\\\ e(\xi, t)} =& \bbm{0&\mathrm{div}\\\mathrm{grad}&0} \bbm{x(\xi,t)\\e(\xi,t)},
     \quad \xi\in\Omega, ~t\geq0,\\
      \bbm{x(\xi,0)\\e(\xi,0)}&=\bbm{x_0(\xi)\\e_0(\xi)},\quad \xi\in\Omega,\\
    	 x(\xi,t) &= 0,\quad \xi\in\partial\Omega,~t\geq0.
  \end{aligned}\right.
\end{equation}
Here $\Omega\subset {\mathbb R}^n$ is a bounded Lipschitz domain with boundary $\partial\Omega$, $\mathrm{div}$ is the divergence operator $\Div w=\partial w_1/\partial\xi_1+\ldots+\partial w_n/\partial\xi_n$, $\mathrm{grad}$ is the gradient operator $\Grad x=(\partial x/\partial\xi_1,\ldots,\partial x/\partial\xi_n)^\top$, and $\alpha(\xi)$ is the (strictly positive) thermal diffusivity at the point $\xi\in\Omega$. 

The key to this link (more details below) is the next theorem which is taken from \cite[Thm 2.6]{ParaHypConf}. In the theorem, we assume that two (in general unbounded) operators are given: $A_1:\sbm{X_1\\X_2}\supset\dom{A_1}\to X_1$ and $A_{21}:X_1\supset\dom{A_{21}}\to X_2$. Then we define an operator $A_{ext}$ as
\begin{equation}\label{eq:AextIntro2}
\begin{aligned}
	A_{ext} &:= \bbm{A_1\\\bbm{A_{21}&0}}, \\ 
	\dom{A_{ext}}  &:=
		\set{\bbm{x\\e}\in\dom{A_1}\bigmid x\in\dom{A_{21}}}.
\end{aligned}
\end{equation}
\begin{theorem}\label{thm:parahypconf}
Assume that $A_{ext}$ in \eqref{eq:AextIntro2} generates a contraction semigroup on the pair $\sbm{X_1\\X_2}$ of Hilbert spaces and that $S$ is a bounded operator on $X_2$ that satisfies $\re\Ipdp{Sx}{x}\geq\delta\|x\|^2$ for some $\delta>0$ and all $x\in X_2$. 

Then the operator $A_S$ defined using $A_{ext}$ and $S$ as
\begin{equation}\label{eq:ASintro}
\begin{aligned}
  A_Sx &:= A_1\bbm{x\\SA_{21}x},\\
  \dom{A_S} &:= \left\{x\in\dom{A_{21}}\bigmid \bbm{x\\SA_{21}x}\in\dom{A_1}\right\}
\end{aligned}
\end{equation}
generates a contraction semigroup on $X_1$.
\end{theorem}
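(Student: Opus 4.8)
The plan is to apply the Lumer--Phillips theorem, so I must show that $A_S$ is dissipative and that the range condition $\range{I-A_S}=X_1$ holds; density of $\dom{A_S}$ then follows automatically, since a dissipative operator satisfying the range condition on a reflexive (in particular, Hilbert) space is necessarily densely defined. Dissipativity is the easy half. Given $x\in\dom{A_S}$, set $e:=SA_{21}x$; then $\sbm{x\\e}\in\dom{A_{ext}}$ and $A_{ext}\sbm{x\\e}=\sbm{A_Sx\\A_{21}x}$. Since $A_{ext}$ generates a contraction semigroup it is dissipative, so $\re\ipd{A_Sx}{x}+\re\ipd{A_{21}x}{SA_{21}x}\le0$; using $\re\ipd{SA_{21}x}{A_{21}x}\ge\delta\norm{A_{21}x}^2$ this gives $\re\ipd{A_Sx}{x}\le-\delta\norm{A_{21}x}^2\le0$.

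The range condition is where the feedback idea enters, and I expect it to be the main obstacle. Note first that coercivity makes $S$ and $S^*$ boundedly invertible. Given $y\in X_1$, I introduce a free variable $w\in X_2$ and solve $(I-A_{ext})\sbm{x\\e}=\sbm{y\\w}$; this has a unique solution, with $\norm{(I-A_{ext})^{-1}}\le1$, because $A_{ext}$ is m-dissipative. Let $E\colon X_2\to X_2$ send $w$ to the $X_2$-component of $(I-A_{ext})^{-1}\sbm{0\\w}$ and let $e_0$ be the $X_2$-component of $(I-A_{ext})^{-1}\sbm{y\\0}$, so that $e=e_0+Ew$. Closing the loop means enforcing $e=SA_{21}x$; since the second row of the resolvent equation reads $A_{21}x=e-w$, this is equivalent to $w=(I-S^{-1})e$, which after substituting $e=e_0+Ew$ becomes the bounded equation $Tw=(I-S^{-1})e_0$ on $X_2$, where $T:=I-(I-S^{-1})E$. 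Conversely, any solution $w$ unwinds: it satisfies $w=(I-S^{-1})e$, hence $A_{21}x=e-w=S^{-1}e$, i.e.\ $e=SA_{21}x$, so $x\in\dom{A_S}$, and the first row gives $(I-A_S)x=y$. Thus everything reduces to showing that $T$ is boundedly invertible.

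The decisive observation is a scattering-type contractivity of the extended resolvent. Rewriting the resolvent equation (with $y=0$) as $A_{ext}\sbm{x\\e}=\sbm{x\\e-w}$ and invoking dissipativity of $A_{ext}$ yields $\norm{x}^2+\norm{Ew}^2\le\re\ipd{Ew}{w}$, so in particular $\re\ipd{Ew}{w}\ge\norm{Ew}^2$ for every $w\in X_2$. Because $A_{ext}$ generates a contraction semigroup on a Hilbert space, so does $A_{ext}^*$, and the relation $\bigl((I-A_{ext})^{-1}\bigr)^*=(I-A_{ext}^*)^{-1}$ shows that the adjoint $E^*$ is exactly the analogous operator for $A_{ext}^*$; the identical computation therefore gives $\re\ipd{E^*w}{w}\ge\norm{E^*w}^2$. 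Moreover coercivity passes to $S^{-1}$ and $(S^*)^{-1}$ with a common constant $\delta'>0$. Writing $Tw=(I-E)w+S^{-1}Ew$ and pairing with $Ew$, the estimate $\re\ipd{(I-E)w}{Ew}\ge0$ together with $\re\ipd{S^{-1}Ew}{Ew}\ge\delta'\norm{Ew}^2$ gives $\norm{Ew}\le\norm{Tw}/\delta'$, whence $\norm{w}\le(1+\norm{I-S^{-1}}/\delta')\norm{Tw}$; thus $T$ is bounded below, hence injective with closed range. Finally $\Ker{T^*}=\zero$: if $T^*z=0$ then $z=E^*v$ with $v=(I-(S^*)^{-1})z$, and comparing $\re\ipd{v}{z}=\re\ipd{E^*v}{v}\ge\norm{z}^2$ with $\re\ipd{v}{z}=\norm{z}^2-\re\ipd{(S^*)^{-1}z}{z}\le(1-\delta')\norm{z}^2$ forces $z=0$. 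Hence $T$ has dense range as well, so $T$ is boundedly invertible, the range condition holds, and Lumer--Phillips yields that $A_S$ generates a contraction semigroup.
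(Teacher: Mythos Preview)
Your proof is correct and follows a genuinely different route from the paper's. The paper derives Theorem~\ref{thm:parahypconf} as a corollary of its main feedback-theoretic result (Theorem~\ref{thm:KadmConseq}): it first applies the \emph{external Cayley system transform} to $A_{ext}$ to obtain a scattering-passive system node $\SmallSysNode$, takes the operator Cayley transform $K=(S-I)(S+I)^{-1}$ of $S$, and then invokes the machinery of admissible static output feedback for well-posed systems. The uniform accretivity of $S$ makes $\|K\|<1$, so $I-K\Dfrak_0^T$ is invertible on the signal space $L^2([0,T];X_2)$, $K$ is admissible, and the closed-loop main operator---which is identified with $A_S$---automatically generates a $C_0$-semigroup; dissipativity then upgrades this to a contraction semigroup.

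Your argument stays entirely at the resolvent level: you encode the loop-closing condition $e=SA_{21}x$ as a single bounded equation $Tw=(I-S^{-1})e_0$ on $X_2$ and verify invertibility of $T=I-(I-S^{-1})E$ by hand, using the passivity-type inequality $\re\ipd{Ew}{w}\ge\|Ew\|^2$ (and its dual for $E^*$) that falls directly out of the dissipativity of $A_{ext}$ and $A_{ext}^*$. This is more elementary and self-contained---no Cayley transforms, no system nodes, no input/output maps---and is closer in spirit to the ``perturbation argument'' the paper attributes to the original proof in \cite{ParaHypConf}. The price is that your argument is tied to the special block structure \eqref{eq:AextIntro2} (you use $A_2\sbm{x\\e}=A_{21}x$ to get $A_{21}x=e-w$), whereas the paper's feedback framework handles the general $A_{ext}$ of Definition~\ref{def:AS}, allows maximal accretive but unbounded $S$ under an admissibility hypothesis, and yields as a by-product that the closed-loop system is itself well-posed.
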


In order to show how this semigroup-theoretic result links the PDEs
(\ref{eq:heatND}) and (\ref{eq:waveintro}), we have to identify the
spaces and operators of Theorem \ref{thm:parahypconf}. 
As Hilbert spaces $X_1$ and $X_2$ we choose $L^2(\Omega)$ and
$L^2(\Omega)^n$, respectively. The operator $A_{ext}$ is given by
\begin{equation}\label{eq:AextIntro}
  A_{ext} = \bbm{ A_1 \\ \bbm{A_{21}& 0}} := \bbm{0&\mathrm{div}\\\mathrm{grad}&0},
    \quad \dom{A_{ext}} = \bbm{H^1_0(\Omega)\\H^{\mathrm{div}}(\Omega)},
\end{equation}
where $H^1(\Omega)$ is the standard Sobolev space of functions that
together with all their first-order partial derivatives lie in
$L^2(\Omega)$, $H^1_0(\Omega)$ is the 
subspace of functions in $H^1(\Omega)$ that vanish on the boundary $\partial\Omega$ of $\Omega$, and
$$
  H^{\mathrm{div}}(\Omega) := \set{w\in L^2(\Omega)^n\bigmid \Div w\in L^2(\Omega)}.
$$
It is clear that (\ref{eq:waveintro}) is associated to the operator
$A_{ext}$. Since $A_{ext}$ is skew-adjoint on $\sbm{X_1\\X_2}$, see
e.g.\ \cite{parahyp_report}, it generates a contraction
semigroup. Choosing $S$ to be the multiplication operator $(Sf)(\xi)=
\alpha(\xi) f(\xi)$, $f \in X_2$, $\xi\in\Omega$, it is straightforward to see that
$A_S$ in (\ref{eq:ASintro}) is the operator associated to
(\ref{eq:heatND}). Hence if the thermal diffusivity $\alpha$
satisfies the (physically natural) condition $0< m I \leq \alpha(\xi)
\leq M I$, $\xi \in \Omega$ with $m$ and $M$ independent of $\xi$,
then we can use Theorem \ref{thm:parahypconf} to link the two PDEs.  

Theorem \ref{thm:parahypconf} was proved as \cite[Thm
2.6]{ParaHypConf} using a perturbation argument, and the result and
its proof are also included in \cite{ParaHyp}. In the present article
we give a new proof method which also allows us to generalize this
theorem.  In order to formulate our result, we have to introduce some notation
and terminology; the precise definitions are given later in
the paper. 

As in Theorem \ref{thm:parahypconf}, $A_{ext}$ is assumed to generate a contraction
semigroup on $\sbm{X_1\\X_2}$. However, we do not assume that the lower
right corner is zero. This influences the definition of $A_S$ which now becomes $A_Sx:=z$
where $\sbm{z\\f}=A_{ext}\sbm{x\\Sf}$ for some $f$; see
Definition \ref{def:AS}. The \emph{external Cayley system transform} of
$A_{ext}$ is the mapping from $\left[\begin{smallmatrix} x \\
  u \end{smallmatrix}\right]$ to $\left[\begin{smallmatrix} z \\
  y \end{smallmatrix}\right]$, where $\sbm{z\\f}:=A_{ext}\sbm{x\\e}$ and
$u:=\frac{e-f}{\sqrt2}$, $y:=\frac{e+f}{\sqrt2}$. A \emph{system node} is the natural generalization to
infinite-dimensional systems of the matrix $\sbm{A&B\\C&D}$ in the
continuous-time finite-dimensional system $\sbm{\dot
  x(t)\\y(t)}=\sbm{A&B\\C&D}\sbm{x(t)\\u(t)}$; see Definition
\ref{def:sysnode}. A system node is \emph{scattering passive} if and only if the
following energy inequality holds:
$$
  2\re\Ipdp{z}{x}_X\leq \|u\|_U^2-\|y\|_Y^2,\quad \text{with}~\bbm{z\\y}=\bbm{\AB\\\CD}\bbm{x\\u}.
$$
\begin{theorem}\label{thm:KadmConseq}
Let $A_{ext}$ generate a contraction semigroup on the pair
$\sbm{X_1\\X_2}$ of Hilbert spaces and let $-S$ generate a contraction
semigroup on $X_2$. Then the following claims are true:
\begin{enumerate}
\item
  The external Cayley system transform $\SmallSysNode$ of $A_{ext}$ is
  a scattering-passive system node.
\item
  If the Cayley transform $K=(S-I)(S+I)^{-1}$ of $S$ is an admissible static output feedback
  operator for $\SmallSysNode$, then the relation $A_S$ defined via
$$
  A_Sx:=z,\quad \bbm{z\\f}=A_{ext}\bbm{x\\Sf}~ \text{for some}~ f\in X_2,
$$
is the (single-valued) generator of a contraction semigroup on $X_1$.
\item 
   If $S$ is bounded and $\re\Ipdp{Sx}{x}\geq\delta\|x\|^2$ for
  some $\delta>0$ and all $x\in X_2$, then $K$ is admissible.
\end{enumerate}
\end{theorem}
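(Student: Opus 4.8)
The plan is to read all three claims through the dictionary, supplied by the external Cayley system transform, between generators of contraction semigroups (equivalently, by Lumer--Phillips, maximal dissipative operators) and scattering-passive system nodes, together with the static-output-feedback calculus for system nodes. The unifying observation is that the constitutive law $e=Sf$ defining $A_S$ becomes, in the scattering variables $u=(e-f)/\sqrt2$ and $y=(e+f)/\sqrt2$, exactly the feedback law $u=Ky$ with $K=(S-I)(S+I)^{-1}$; the inverse exists because $-S$ generates a contraction semigroup, whence $1\in\res{-S}$ and $S+I$ is boundedly invertible.

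For claim (1) I would first isolate the energy identity. Expanding the scattering norms of $\sbm{z\\f}=A_{ext}\sbm{x\\e}$ gives $\norm{u}^2-\norm{y}^2=-2\re\Ipdp{e}{f}$, so that
\[ \norm{u}_U^2-\norm{y}_Y^2-2\re\Ipdp{z}{x}_X = -2\re\Bigipd{A_{ext}\bbm{x\\e}}{\bbm{x\\e}}\ge 0, \]
the final inequality being precisely the dissipativity of $A_{ext}$ furnished by Lumer--Phillips. This is the scattering-passivity inequality. It then remains to check that $\SmallSysNode$ obeys the structural axioms of a system node; here I would invoke the fact that the external Cayley transform carries a maximal dissipative operator to a system node, the closedness of $A_{ext}$ and the invertibility of $S+I$-type combinations supplying the domain, density and graph-continuity requirements.

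For claim (2) I would close the admissible feedback $u=Ky$ around $\SmallSysNode$. The dissipativity of $-S$ yields $\re\Ipdp{Sw}{w}\ge0$, which by the elementary Cayley estimate gives $\norm{K}\le1$; closing a contractive admissible feedback on a scattering-passive node produces another system node that is again scattering passive, and the main operator of a scattering-passive system node generates a contraction semigroup. It then remains to identify this main operator with $A_S$: putting the external input to zero, $u=Ky$ is equivalent to $e=Sf$, and $\sbm{z\\f}=A_{ext}\sbm{x\\Sf}$ then forces $z=A_Sx$, so the closed-loop main operator is exactly the relation $A_S$. Admissibility of $K$ is what guarantees that $f$, and hence $z$, is uniquely determined by $x$, i.e.\ that $A_S$ is single-valued and closed. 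I expect this claim to be the technical heart: justifying that the feedback calculus applies and preserves scattering passivity under the contractive $K$, and, above all, matching the abstractly produced closed-loop main operator with the concretely defined relation $A_S$.

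For claim (3) I would sharpen the contraction estimate to a strict one. Writing $v=(S+I)w$, the identity $\norm{v}^2-\norm{Kv}^2=4\re\Ipdp{Sw}{w}$ together with $\re\Ipdp{Sw}{w}\ge\delta\norm{w}^2$ and $\norm{v}\le(\norm{S}+1)\norm{w}$ (where boundedness of $S$ enters) gives $\norm{Kv}^2\le\bigl(1-4\delta/(\norm{S}+1)^2\bigr)\norm{v}^2$, hence $\norm{K}<1$. A strictly contractive operator is an admissible static output feedback for a scattering-passive node, because the transfer-function values are contractions and therefore $I-\widehat{D}(\lambda)K$ is boundedly invertible, uniformly on a right half-plane, which is the admissibility criterion. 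Thus claims (1) and (3) reduce, respectively, to the one-line energy identity above and a coercivity estimate feeding into established criteria, while the genuine work is concentrated in claim (2).
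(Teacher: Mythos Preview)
Your overall architecture matches the paper's: read $A_S$ as the main operator of the closed-loop system obtained from the external Cayley transform $\SmallSysNode$ under the static feedback $u=Ky$, and deduce admissibility of $K$ from $\|K\|<1$ when $S$ is bounded and uniformly accretive. The identification $A_S=A^f$ via the dictionary $u=Ky \Leftrightarrow e=Sf$ is exactly what the paper carries out.

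There is, however, a genuine gap in your argument for claim~(2). The assertion that ``closing a contractive admissible feedback on a scattering-passive node produces another system node that is again scattering passive'' is false. Take the memoryless scattering-conservative node $z=0$, $y=u$ on $X=U=Y=\C$, and $K=\tfrac12$. The closed loop gives $y=2v$, so $\|v\|^2-\|y\|^2=-3\|v\|^2<0$ for $v\neq0$: not scattering passive. In general, contractive feedback around a passive system need not preserve passivity with respect to the \emph{new} input $v$. The paper sidesteps this entirely: it never claims the closed-loop node is passive. Instead it argues (i)~$A^f$ is the main operator of a system node, hence generates \emph{some} $C_0$-semigroup, so $\cplus\cap\res{A^f}\neq\emptyset$; (ii)~$A_S$ is dissipative by the direct computation
\[
\re\Ipdp{A_Sx}{x}=\re\Ipdp{A_{ext}\sbm{x\\e}}{\sbm{x\\e}}-\re\Ipdp{f}{Sf}\le0;
\]
and (iii)~Lumer--Phillips then upgrades ``$C_0$-semigroup plus dissipative'' to ``contraction semigroup''. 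Your passivity-at-$v=0$ calculation actually \emph{is} this dissipativity step, so the fix is to drop the false intermediate claim and argue as the paper does.

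For claim~(3) you invoke the frequency-domain criterion (invertibility of $I-\widehat D(\lambda)K$ on a half-plane); the paper instead works in the time domain, using that the $T$-input/output map $\Dfrak_0^T$ of a passive node is a contraction, so $\|K\Dfrak_0^T\|\le\|K\|<1$ and $I-K\Dfrak_0^T$ is invertible, which is a well-posed admissibility criterion. Both routes are valid, but the time-domain one is more self-contained here since $\Dfrak_0^T$ is already set up in the paper and avoids having to check that uniform invertibility of $I-\widehat D(\lambda)K$ really yields system-node admissibility.
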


The converse of assertion two in Theorem \ref{thm:KadmConseq} is false; the operator $A_S$ may generate a contraction semigroup even though $K$ is not admissible; see Example \ref{ex:mainconvfalse}.

\begin{remark}\label{rem:insights}
Intuitively, assertion 2 of Theorem \ref{thm:KadmConseq} says that if the closed loop system $\SmallSysNode$ with static output feedback $K$ is a meaningful control system, i.e., a system node, then the main operator of this system generates a contraction semigroup. For more details, see Definition \ref{def:feedb} and Proposition \ref{prop:Sec2Concl} below.

Claim 1 of the previous theorem follows from \cite[Theorem 5.2]{StWe12b}. If $S$ in item 2 or 3 is the identity, then $K=0$ and we have no feedback. Moreover, in this case $A_S$ equals the main operator $A$ in item 1; see Theorem \ref{thm:stwe}. It is often convenient to make the canonical choice $S=I$ (which corresponds to $\alpha\equiv I$ in the heat equation in \eqref{eq:heatND}), but in many cases this is not preferable, or even possible. For instance, the wave equation can be transformed into the viscous Schr\"odinger equation \cite{ErZu09} by choosing $S=iI+\varepsilon$. Although the solutions of the heat and Schr\"odinger equations have completely different properties, the existence of solutions can in both cases be proved by applying Theorem \ref{thm:KadmConseq} to the wave equation (\ref{eq:waveintro}). The examples in Sections \ref{sec:plates} and \ref{sec:degenerate} have $S\neq I$. In this paper we focus on the case described in item 3; hence in all examples the operators $S$ are bounded. 

In the case of item 3, the closed loop system $\SmallSysNode$ with static output feedback $K$ is even well-posed in the sense of Definition \ref{def:isopass}. This will be pointed out later in the discussion.
\end{remark}

We do not expect that
  Theorem \ref{thm:KadmConseq} can yield existence of solutions for a PDE for which no direct
  solution method exists. Rather, our point is that feedback theory can
  quickly solve the problem of existence of solutions, once the problem is solved
  for a simpler PDE; see Section \ref{sec:plates}. Furthermore, it follows from our method
  that not only homogeneous PDEs are well-posed, but also the
  well-posedness of some inhomogeneous PDEs is obtained;
 see Example \ref{ex:wavesysnode}. In a companion paper
 \cite{parahyp_divgrad} we have shown how to easily characterise the
 boundary conditions which give rise to a contraction semigroup for
 many hyperbolic PDEs, especially those similar to the wave
 equation. Controllability and observability of the heat equation have
 previously been successfully studied using the corresponding
 properties of the associated wave equation in \cite{FaRu71}; see
 \cite{Mil06,ErZu11,ParaHyp} for more recent developments in this
 area.

The full abstract setting of the paper is described in detail in
Section \ref{sec:background}, together with a minimal background on
continuous-time infinite-dimensional systems theory. In Section 3, we
transform the maximal dissipative operator $A_{ext}$ into a
scattering-passive system node $\SmallSysNode$, using a recent result
on the external Cayley system transformation by Staffans and Weiss; see \cite[Thm 4.6]{StWe12a}. Then we proceed to represent $A_S$ in terms of $\SmallSysNode$. The main contribution of the paper is Section \ref{sec:scattfeedback}, where we prove Theorem \ref{thm:KadmConseq} using feedback techniques. We apply the results of Theorem \ref{thm:KadmConseq} in Section \ref{sec:plates}, where two examples of damped wave equations are provided, one with viscous damping and one with structural damping.
We end the paper with an application of Theorem \ref{thm:KadmConseq} to degenerate parabolic PDEs, in Section \ref{sec:degenerate}.

Theorem \ref{thm:parahypconf} was generalized to the Banach-space setting by Schwenninger in \cite{ParaBanach}. The work \cite{TuWe03,WeTu03,TuWeBook} of Tucsnak and Weiss on ``conservative systems from thin air'', and that of Staffans and Weiss \cite{StWe12a,StWe12b} on Maxwell's equations, are also very closely related to the present paper. However, it is not straightforward to translate the results from one setting to the other, and neither approach seems to be a special case of the other one. In the present paper we make extensive use of well-posed systems theory \cite{StafBook}, and useful connections can also be made to linear port-Hamiltonian systems \cite{JaZwBook,JaviThesis, KZSB10}. Finally, it should be mentioned that Desoer and Vidyasagar used similar methods with finite-dimensional, but non-linear, systems in \cite[Sect.\ VI.5]{DeViBook}.

\section{The abstract setting}\label{sec:background}

The operator $A$ on a Hilbert space $X$ is \emph{dissipative} if $\re\Ipdp{A x}{x}\leq 0$ for all $x\in\dom{A}$, and we say that $A$ is \emph{maximal dissipative} if $A$ has no proper extension which is still a dissipative operator on $X$. The operator $S$ is \emph{(maximal) accretive} if $-S$ is (maximal) dissipative. The following definition generalizes \eqref{eq:ASintro}; see Figure \ref{fig:AS} for an illustration:

\begin{definition}\label{def:AS}
Let $X_1$ and $X_2$ be Hilbert spaces, let $A_{ext}=\sbm{A_1\\A_2}:\sbm{X_1\\X_2}\supset\dom{A_{ext}}\to\sbm{X_1\\X_2}$ be a closed and maximal dissipative linear operator, and let $S$ be a closed and maximal accretive linear operator on $X_2$. 

The in general unbounded mapping $A_S$ from $\dom{A_S}\subset X_1$ into $X_1$ defined by
\begin{equation}\label{eq:ASDef}
\begin{aligned}
  \dom{A_S} &:= \bigg\{x\in X_1\bigmid \exists f\in\dom S,e\in X_2:\\ &\qquad\qquad
    ~\bbm{x\\e}\in\dom{A_{ext}},~ f=A_2\bbm{x\\e},~ e=Sf\bigg\},\\
  A_S x &:= z,\quad \bbm{z\\f}=A_{ext}\bbm{x\\e},~e=Sf,
\end{aligned}
\end{equation}
is called the mapping \emph{$A_{ext}$ with internal loop through $S$}.
\end{definition}

\begin{figure}[!h]
\begin{center}

\ifx\du\undefined
  \newlength{\du}
\fi
\setlength{\du}{5\unitlength}
\begin{tikzpicture}
\pgftransformxscale{1.000000}
\pgftransformyscale{-1.000000}
\definecolor{dialinecolor}{rgb}{0.000000, 0.000000, 0.000000}
\pgfsetstrokecolor{dialinecolor}
\definecolor{dialinecolor}{rgb}{1.000000, 1.000000, 1.000000}
\pgfsetfillcolor{dialinecolor}
\pgfsetlinewidth{0.100000\du}
\pgfsetdash{{1.000000\du}{1.000000\du}}{0\du}
\pgfsetdash{{1.000000\du}{1.000000\du}}{0\du}
\pgfsetroundjoin
{\pgfsetcornersarced{\pgfpoint{0.700000\du}{0.700000\du}}\definecolor{dialinecolor}{rgb}{0.000000, 0.000000, 0.000000}
\pgfsetstrokecolor{dialinecolor}
\draw (-5.000000\du,-1.000000\du)--(-5.000000\du,20.000000\du)--(27.000000\du,20.000000\du)--(27.000000\du,-1.000000\du)--cycle;
}\pgfsetlinewidth{0.100000\du}
\pgfsetdash{}{0pt}
\pgfsetdash{}{0pt}
\pgfsetroundjoin
{\pgfsetcornersarced{\pgfpoint{0.300000\du}{0.300000\du}}\definecolor{dialinecolor}{rgb}{0.000000, 0.000000, 0.000000}
\pgfsetstrokecolor{dialinecolor}
\draw (6.000000\du,4.000000\du)--(6.000000\du,10.000000\du)--(16.000000\du,10.000000\du)--(16.000000\du,4.000000\du)--cycle;
}
\definecolor{dialinecolor}{rgb}{0.000000, 0.000000, 0.000000}
\pgfsetstrokecolor{dialinecolor}
\node[anchor=west] at (22.000000\du,2.000000\du){$A_S$};
\definecolor{dialinecolor}{rgb}{0.000000, 0.000000, 0.000000}
\pgfsetstrokecolor{dialinecolor}
\node[anchor=west] at (8.500000\du,7.000000\du){$A_{ext}$};
\pgfsetlinewidth{0.100000\du}
\pgfsetdash{}{0pt}
\pgfsetdash{}{0pt}
\pgfsetroundjoin
{\pgfsetcornersarced{\pgfpoint{0.300000\du}{0.300000\du}}\definecolor{dialinecolor}{rgb}{0.000000, 0.000000, 0.000000}
\pgfsetstrokecolor{dialinecolor}
\draw (8.000000\du,13.000000\du)--(8.000000\du,17.000000\du)--(14.000000\du,17.000000\du)--(14.000000\du,13.000000\du)--cycle;
}
\definecolor{dialinecolor}{rgb}{0.000000, 0.000000, 0.000000}
\pgfsetstrokecolor{dialinecolor}
\node[anchor=west] at (9.450000\du,15.000000\du){$S$};
\pgfsetlinewidth{0.100000\du}
\pgfsetdash{}{0pt}
\pgfsetdash{}{0pt}
\pgfsetmiterjoin
\pgfsetbuttcap
{
\definecolor{dialinecolor}{rgb}{0.000000, 0.000000, 0.000000}
\pgfsetfillcolor{dialinecolor}
\pgfsetarrowsend{stealth}
{\pgfsetcornersarced{\pgfpoint{0.000000\du}{0.000000\du}}\definecolor{dialinecolor}{rgb}{0.000000, 0.000000, 0.000000}
\pgfsetstrokecolor{dialinecolor}
\draw (16.000000\du,8.000000\du)--(22.000000\du,8.000000\du)--(22.000000\du,15.000000\du)--(14.000000\du,15.000000\du);
}}
\pgfsetlinewidth{0.100000\du}
\pgfsetdash{}{0pt}
\pgfsetdash{}{0pt}
\pgfsetmiterjoin
\pgfsetbuttcap
{
\definecolor{dialinecolor}{rgb}{0.000000, 0.000000, 0.000000}
\pgfsetfillcolor{dialinecolor}
\pgfsetarrowsend{stealth}
{\pgfsetcornersarced{\pgfpoint{0.000000\du}{0.000000\du}}\definecolor{dialinecolor}{rgb}{0.000000, 0.000000, 0.000000}
\pgfsetstrokecolor{dialinecolor}
\draw (8.000000\du,15.000000\du)--(0.000000\du,15.000000\du)--(0.000000\du,8.000000\du)--(6.000000\du,8.000000\du);
}}
\definecolor{dialinecolor}{rgb}{0.000000, 0.000000, 0.000000}
\pgfsetstrokecolor{dialinecolor}
\node[anchor=west] at (8.000000\du,7.000000\du){};
\pgfsetlinewidth{0.100000\du}
\pgfsetdash{}{0pt}
\pgfsetdash{}{0pt}
\pgfsetbuttcap
{
\definecolor{dialinecolor}{rgb}{0.000000, 0.000000, 0.000000}
\pgfsetfillcolor{dialinecolor}
\pgfsetarrowsend{stealth}
\definecolor{dialinecolor}{rgb}{0.000000, 0.000000, 0.000000}
\pgfsetstrokecolor{dialinecolor}
\draw (-10.000000\du,6.000000\du)--(6.000000\du,6.000000\du);
}
\pgfsetlinewidth{0.100000\du}
\pgfsetdash{}{0pt}
\pgfsetdash{}{0pt}
\pgfsetbuttcap
{
\definecolor{dialinecolor}{rgb}{0.000000, 0.000000, 0.000000}
\pgfsetfillcolor{dialinecolor}
\pgfsetarrowsend{stealth}
\definecolor{dialinecolor}{rgb}{0.000000, 0.000000, 0.000000}
\pgfsetstrokecolor{dialinecolor}
\draw (16.000000\du,6.000000\du)--(34.000000\du,6.000000\du);
}
\definecolor{dialinecolor}{rgb}{0.000000, 0.000000, 0.000000}
\pgfsetstrokecolor{dialinecolor}
\node[anchor=west] at (8.000000\du,7.000000\du){};
\definecolor{dialinecolor}{rgb}{0.000000, 0.000000, 0.000000}
\pgfsetstrokecolor{dialinecolor}
\node[anchor=west] at (9.000000\du,7.000000\du){};
\definecolor{dialinecolor}{rgb}{0.000000, 0.000000, 0.000000}
\pgfsetstrokecolor{dialinecolor}
\node[anchor=west] at (28.000000\du,4.000000\du){$z=A_Sx$};
\definecolor{dialinecolor}{rgb}{0.000000, 0.000000, 0.000000}
\pgfsetstrokecolor{dialinecolor}
\node[anchor=west] at (15.000000\du,17.000000\du){$f$};
\definecolor{dialinecolor}{rgb}{0.000000, 0.000000, 0.000000}
\pgfsetstrokecolor{dialinecolor}
\node[anchor=west] at (-10.000000\du,4.000000\du){$x$};
\definecolor{dialinecolor}{rgb}{0.000000, 0.000000, 0.000000}
\pgfsetstrokecolor{dialinecolor}
\node[anchor=west] at (4.000000\du,17.000000\du){$e$};
\definecolor{dialinecolor}{rgb}{0.000000, 0.000000, 0.000000}
\pgfsetstrokecolor{dialinecolor}
\node[anchor=west] at (27.000000\du,4.000000\du){};
\definecolor{dialinecolor}{rgb}{0.000000, 0.000000, 0.000000}
\pgfsetstrokecolor{dialinecolor}
\node[anchor=west] at (18.150000\du,16.850000\du){};
\end{tikzpicture}

\caption{Representing $A_S$ using $A_{ext}$ and $S$.}
\label{fig:AS}
\end{center}
\end{figure}
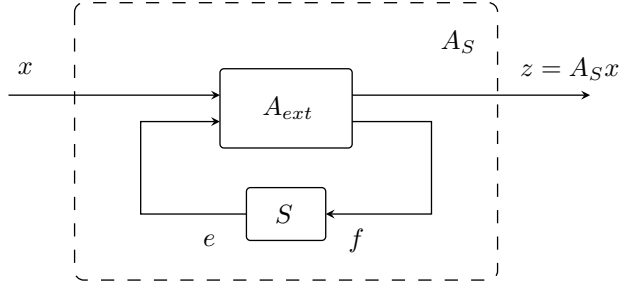

If $A_{ext}$ is of the form $\sbm{A_1\\\sbm{A_{21}&0}}$ then
\eqref{eq:ASDef} reduces to \eqref{eq:ASintro}. Moreover, it is
straightforward to verify that $A_S$ is linear, but $A_S$ can in
general be multi-valued, even when both $A_{ext}$ and $S$ are
single-valued. For example, take $X_1=X_2=\C$,
$A_{ext}=\sbm{0&i\\i&-i}$, and $S=i$. Then $\dom{A_S}=\zero$ and the
multi-valued part of $A_S$ is $\C$. Fortunately, in the combinations
of $A_{ext}$ and $S$ that we consider in the present paper $A_S$ is
always single valued. 

\begin{remark}
Figure \ref{fig:AS} is strongly reminiscent of feedback, but we want to emphasize that we are at this point \emph{not working with standard feedback}. In the ODE \eqref{eq:waveintro} associated to $A_{ext}$, both variables $x$ and $e$ are state variables of a system that has no inputs or outputs. On the other hand, if we want to interpret Figure \ref{fig:AS} as feedback, then $e$ would have the interpretation of input signal, $x$ would be the state variable, $z=\dot x$ the (time) derivative of the state, and $f$ would be the output signal. Sometimes, but certainly not always, it is possible to obtain useful results by making such a reinterpretation of the variables. For instance, if we in \eqref{eq:waveintro} replace $\dot e$ by an arbitrary variable $f$, and thus drop the assumption that $f=\dot e$, then we no longer have a meaningful system in the sense that the resulting mapping from $\sbm{x\\e}$ to $\sbm{z\\f}$ is not a system node. See Definition \ref{def:sysnode} and Example \ref{ex:wavesysnode} below for more details.
\end{remark}

The operator $A_S$ is always dissipative if $A_{ext}$ is dissipative and $S$ is accretive. Indeed, due to \eqref{eq:ASDef}, we can for all $x\in\dom{A_S}$ find $z\in X_1$ $f,e\in X_2$ such that $\sbm{z\\f}=A_{ext}\sbm{x\\e}$ and $e=Sf$. Then $z=A_Sx$ and it follows that
\begin{equation}\label{eq:ASdiss}
\begin{aligned}
  \re\Ipdp{A_Sx}{x} &= \re\Ipdp{z}{x} = \re \Ipdp{\bbm{z\\f}}{\bbm{x\\e}} - \re \Ipdp{f}{e} \\
  &= \re \Ipdp{A_{ext}\bbm{x\\e}}{\bbm{x\\e}} -\re \Ipdp{f}{Sf}\leq 0.
\end{aligned}
\end{equation}
According to the following famous theorem, $A_S$ generates a contraction semigroup
if and only if $A_S$ is closed and \emph{maximal} dissipative:

\begin{theorem}[Lumer-Phillips]\label{thm:lumphi}
For a linear operator $A$ on a Hilbert space $X$, the following conditions are equivalent:
\begin{enumerate}
\item $A$ generates a contraction semigroup on $X$.
\item $A$ is closed and maximal dissipative, i.e., dissipative with no dissipative proper extension.
\item $A$ is densely defined, closed, and dissipative, and $A^*$ is also dissipative.
\item $A$ is dissipative and there exists at least one $\alpha\in\cplus=\set{\lambda\in\C\mid \re \lambda>0}$ such that $\range{\alpha I-A}=X$.
\item $A$ is dissipative and $\alpha I-A$ has a bounded inverse on $X$ for every $\alpha\in\cplus$. 
\end{enumerate}
\end{theorem}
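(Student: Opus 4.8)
The plan is to prove the five conditions equivalent through a web of implications whose genuinely analytic core is the passage from the resolvent/range conditions to generation; the statements about maximality and the adjoint will then be reached by softer algebraic and Hilbert-space arguments. The one computation I would isolate first, as a lemma, is the \emph{dissipativity estimate}: if $A$ is dissipative then for every $\alpha\in\cplus$ and $x\in\dom A$,
\[
  \re\Ipdp{(\alpha I-A)x}{x}\ge(\re\alpha)\norm{x}^2,
\]
whence by Cauchy--Schwarz $\norm{(\alpha I-A)x}\ge(\re\alpha)\norm{x}$. This single inequality carries most of the bookkeeping: it makes $\alpha I-A$ injective and bounded below on its domain, and, when $A$ is closed, forces $\range{\alpha I-A}$ to be closed.

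The soft implications come next. For (1)$\Rightarrow$(5) I would use the Laplace representation $(\alpha I-A)^{-1}=\int_0^\infty e^{-\alpha t}T(t)\,dt$, which converges for $\re\alpha>0$ and has norm at most $1/\re\alpha$ because $\norm{T(t)}\le1$; dissipativity of $A$ itself falls out of differentiating $t\mapsto\norm{T(t)x}^2$ at $t=0$. The step (5)$\Rightarrow$(4) is immediate. For the adjoint condition I would invoke that on a Hilbert space the adjoint semigroup $T(t)^*$ is again a contraction semigroup with generator $A^*$, so (1) yields that $A$ is densely defined, closed, dissipative and that $A^*$ is dissipative, i.e.\ (3). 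Conversely (3)$\Rightarrow$(4): since $\range{I-A}$ is closed it suffices to see it is dense, and $\range{I-A}^\perp=\Ker{I-A^*}$; any $z$ in this kernel satisfies $A^*z=z$, so $\re\Ipdp{A^*z}{z}=\norm{z}^2\le0$ by dissipativity of $A^*$, forcing $z=0$.

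To fold in condition (2) I would play it against the range condition (4). For (4)$\Rightarrow$(2), with the given $\alpha\in\cplus$ the estimate makes $\alpha I-A$ bounded below with range $X$, hence $\alpha\in\res A$ and $A$ is closed; and if $\tilde A\supseteq A$ is dissipative, then for $x\in\dom{\tilde A}$ one solves $(\alpha I-A)y=(\alpha I-\tilde A)x$ with $y\in\dom A$, so that $x-y\in\Ker{\alpha I-\tilde A}$ and the estimate applied to $\tilde A$ forces $x=y$; thus $A$ is maximal. For the converse (2)$\Rightarrow$(4) it is enough to realise the range condition at $\alpha=1$: if $\range{I-A}\ne X$ then, the range being closed, there is $z\ne0$ with $z\perp\range{I-A}$, equivalently $\Ipdp{Ax}{z}=\Ipdp{x}{z}$ for all $x\in\dom A$; taking $x=z$ shows $z\notin\dom A$. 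The map $\tilde A(x+\lambda z):=Ax-\lambda z$ on $\dom A\dotplus\C z$ is then a \emph{proper} dissipative extension — the cross terms cancel exactly because of the orthogonality relation, leaving $\re\Ipdp{Ax}{x}-\abs{\lambda}^2\norm{z}^2\le0$ — which contradicts maximality.

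The genuinely analytic step, and the one I expect to be the main obstacle, is (4)$\Rightarrow$(1): manufacturing the semigroup out of resolvent information, which is the Hille--Yosida content. I would first record the Hilbert-space fact that the range condition already forces a dense domain: if $z\perp\dom A$, then solving $(I-A)x=z$ and pairing with $x$ gives $\norm{x}^2\le\re\Ipdp{(I-A)x}{x}=\re\Ipdp{z}{x}=0$, so $z=0$. With density secured I would promote (4) to (5) by a connectedness argument — the set of $\alpha\in\cplus$ with $\alpha\in\res A$ is open, while the bound $\norm{(\alpha I-A)^{-1}}\le1/\re\alpha$ keeps it relatively closed in the connected set $\cplus$ — and then run the Yosida approximation: the bounded operators $A_n:=nA(nI-A)^{-1}=n^2(nI-A)^{-1}-nI$ generate contraction semigroups $e^{tA_n}$, and one shows these are mutually close and converge strongly as $n\to\infty$ to the desired contraction semigroup with generator $A$. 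The delicate points are the uniform-in-$n$ estimates and the strong convergence on the dense set $\dom A$; everything else in the theorem is comparatively formal.
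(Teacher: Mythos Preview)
The paper does not actually prove this theorem: it quotes it as the Lumer--Phillips Theorem and defers entirely to \cite[Thms 3.4.8 and 3.4.9]{StafBook}, adding only the remark that $\alpha I-A$ is injective for $\alpha\in\cplus$ whenever $A$ is dissipative. Your outline, by contrast, supplies a complete self-contained argument, and it is correct.

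The dissipativity estimate you isolate as a lemma is precisely the injectivity observation the paper makes. Your web of implications --- $(1)\Rightarrow(5)\Rightarrow(4)\Rightarrow(1)$ via the Laplace integral and the Yosida approximation, with $(3)$ attached through the adjoint semigroup and the $\range{I-A}^\perp=\Ker{I-A^*}$ identity, and $(2)$ attached through the explicit dissipative extension $\tilde A(x+\lambda z)=Ax-\lambda z$ on $\dom A\dotplus\C z$ --- is the standard route and is what one finds in Staffans or any comparable semigroup text. The cross-term cancellation in the $(2)\Rightarrow(4)$ extension argument is correct: with $z\perp\range{I-A}$ one has $\Ipdp{Ax}{z}=\Ipdp{x}{z}$, and $\overline\lambda\Ipdp{x}{z}-\lambda\overline{\Ipdp{x}{z}}$ is purely imaginary.

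One minor slip: in the density step under $(4)\Rightarrow(1)$ you write ``solving $(I-A)x=z$'', but condition (4) only hands you some $\alpha\in\cplus$, not necessarily $\alpha=1$. The argument is unaffected --- solve $(\alpha I-A)x=z$ and use $(\re\alpha)\norm{x}^2\le\re\Ipdp{(\alpha I-A)x}{x}=\re\Ipdp{z}{x}=0$ --- but the text should reflect this.
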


The standard definition of a contraction semigroup and additional background can be found in most books on semigroup theory. Here we assume that the reader is familiar with this theory, and we refer to Chapter 3 of \cite{StafBook} for more details. For a proof of Theorem \ref{thm:lumphi}, see in particular \cite[Thms 3.4.8 and 3.4.9]{StafBook}, noting that $\alpha I-A$ is always injective when $\alpha\in\cplus$ and $A$ is dissipative. The importance of the assumption that $A$ is closed in item two of the Lumer-Phillips Theorem was investigated in \cite[\S I.1.1]{PhiMaxdiss}.

Let $X$ be a Hilbert space and $A$ a linear operator defined on some subset of $X$. Defining the \emph{resolvent set} of $A$ to be the set $\res{A}$ of all $\lambda\in\C$ for which $\lambda I-A$ is both injective and surjective, we can state assertion 4 of Theorem \ref{thm:lumphi} equivalently as ``$A$ is dissipative and $\cplus\cap\res{A}\neq\emptyset$''. Similarly, assertion 5 is equivalent to ``$A$ is dissipative and $\cplus\subset\res{A}$'', due to the Closed Graph Theorem.

Next we introduce the concept of a \emph{system node}. It is helpful to think about a system node $\SmallSysNode$ as a generalization to infinite dimensions of the matrix $\sbm{A&B\\C&D}$ in the standard finite-dimensional linear system with input signal $u(\cdot)$, state trajectory $x(\cdot)$, and output signal $y(\cdot)$:
\begin{equation}\label{eq:finitedim}
  \bbm{\dot x(t)\\y(t)} = \bbm{A&B\\C&D} \bbm{x(t)\\u(t)},\quad t\geq0,\quad x(0)=x_0.
\end{equation}
The associated semigroup is the mapping $t\mapsto e^{At}$, $t\geq0$, which for zero input $u(t)=0$, $t\geq0$, sends the initial state $x_0$ into the state $x(t)$ at time $t\geq0$.

The following definition of a system node is slightly different from the standard definition \cite[Def.\ 4.7.2]{StafBook} that uses rigged Hilbert spaces, but the definitions are seen to be equivalent by combining \cite[Lem.\ 4.7.7]{StafBook} with the fact that every generator of a $C_0$-semigroup has a non-empty resolvent set; see \cite[Thm 3.2.9]{StafBook}.

\begin{definition}\label{def:sysnode}
By a \emph{system node} with \emph{input space} $U$, \emph{state space} $X$, and \emph{output space} $Y$, all Hilbert spaces, we mean an in general unbounded linear operator
$$\bbm{\AB\\\CD}:\bbm{ X\\ U}\supset\dom {\bbm{\AB\\\CD}}\to \bbm{ X\\ Y}$$
with the following properties:
\begin{enumerate}
\item The operator $\SmallSysNode$ is closed.
\item The operator $\AB$ is closed, where $\AB$ is the projection of $\SmallSysNode$ onto $\sbm{X\\\zero}$ along $\sbm{\zero\\Y}$.
\item the \emph{main operator} $A \colon \dom A \to  X$, which is defined by 
\begin{equation}\label{eq:Adef}
	Ax = \AB\bbm{x\\ 0},\quad \dom A = \set{x \in  X \bigmid \bbm{x \\ 0} \in \dom{\SmallSysNode}},
\end{equation}
is the generator of a $C_0$-semigroup on $ X$.
\item The domain of $\SmallSysNode$ satisfies the condition
$$
  \forall u\in U ~ \exists x\in X:\quad \bbm{x\\u} \in \dom{\SmallSysNode}.
$$
\end{enumerate}

By a \emph{classical trajectory} of the system node $\SmallSysNode$ we mean a triple $(u,x,y)$ where $u\in C(\rplus; U)$, $x\in C^1(\rplus; X)$, $y\in C(\rplus; Y)$, $\sbm{x(t)\\u(t)}\in\dom{\SmallSysNode}$ for all $t\geq0$, and
\begin{equation}\label{eq:classtraj}
  \bbm{\dot x(t)\\y(t)}=\bbm{\AB\\\CD} \bbm{x(t)\\u(t)},\quad t\geq0,
\end{equation}
using the derivative from the right at $0$.
\end{definition}

When we in the sequel use the notation $\AB$, we
mean that the operators $A$ and $B$ can in general no longer be separated from
each other (without extending the co-domain and the domain).

Let $\pi_{[0,T]}$ denote the linear operator which first restricts a function to the interval $[0,T]$ and then extends the restricted function by zero on $\R\setminus[0,T]$, and introduce the Sobolev space
$$
  H^1_0(\rplus;U):=\set{u\in L^2(\rplus;U)\bigmid \frac{\mathrm{d}u}{\mathrm{d}\xi}\in L^2(\rplus;U), ~u(0)=0}.
$$
Let $\SmallSysNode$ be a system node. Then there for every $u\in H^1_0(\rplus;U)$ exist $x\in C^1(\rplus;X)$ and $y\in C(\rplus;Y)$, such that $(u,x,y)$ is a classical trajectory of $\SmallSysNode$ with $x(0)=0$; see \cite[Lemma 4.7.8]{StafBook}. Thus, for every $u_0\in \pi_{[0,T]}H^1_0(\rplus;U)$ there exists a classical trajectory $(u,x,y)$ of $\SmallSysNode$ with $x(0)=0$ and $\pi_{[0,T]}u=u_0$. It is well-known that $\pi_{[0,T]} H^1_0(\rplus;U)$ is dense in $L^2([0,T];U)$, and therefore for all $T>0$,
\begin{equation}\label{eq:classinputdense}
  \Uscr_0^T := \set{\pi_{[0,T]}u\bigmid (u,x,y)~\text{classical trajectory of}~\SmallSysNode~\wedge~x(0)=0} 
\end{equation}
is a dense subspace of $L^2([0,T];U)$.

\begin{definition}\label{def:isopass}
A system node $\SmallSysNode$ is \emph{($L^2$-)well-posed} if there for every $T\geq0$ exists a corresponding constant $M_T\geq0$ such that all classical trajectories $(u,x,y)$ of $\SmallSysNode$ satisfy
\begin{equation}\label{eq:scattpass}
  \|x(T)\|_ X^2 + \int_0^T\|y(t)\|_ Y^2\ud t \leq M_T\left(\|x(0)\|_ X^2 + \int_0^T\|u(t)\|_ U^2\ud t\right).
\end{equation}
The system node is \emph{(scattering) passive} if \eqref{eq:scattpass} holds with $M_T=1$ for all $T\geq0$.
\end{definition}

A system node is well-posed (passive) if and only if there exist one $T>0$, such that the inequality in \eqref{eq:scattpass} holds with some $M_T\geq0$ (with $M_T=1$). Often $M_T$ grows with growing $T$ in the non-passive well-posed case. By \cite[Thm 11.1.5]{StafBook}, a system node $\SmallSysNode$ is passive if and only if it for all $\sbm{x\\u}\in\dom{\SmallSysNode}$ holds that
\begin{equation}\label{eq:scattpassdiff}
  2\re\Ipdp{z}{x}_X\leq \|u\|_U^2-\|y\|_Y^2,\quad \text{with}~\bbm{z\\y}=\bbm{\AB\\\CD}\bbm{x\\u}.
\end{equation}

Let $(u,x,y)$ be a classical trajectory with $x(0)=0$ of a well-posed system node $\SmallSysNode$ and fix $T>0$ arbitrarily. The mapping $\Dfrak_0^T$ from $\pi_{[0,T]}u$ into $\pi_{[0,T]}y$ is a linear operator defined on $\dom{\Dfrak_0^T}=\Uscr_0^T$ with values in $L^2([0,T];Y)$. The domain $\Uscr_0^T$ of $\Dfrak_0^T$ is dense in $L^2([0,T];U)$, and as an operator from $L^2([0,T];U)$ into $L^2([0,T];Y)$, the operator $\Dfrak_0^T$ is bounded by $M_T$ in \eqref{eq:scattpass}.

\begin{definition}\label{def:DzeroT}
Let $\SmallSysNode$ be a well-posed system node and $T>0$ be arbitrary. We call the unique extension of $\Dfrak_0^T$ into a bounded operator $\Dfrak_0^T:L^2([0,T];U)\to L^2([0,T];Y)$ the \emph{$T$-input/output map} of $\SmallSysNode$, and we also denote this extension by $\Dfrak_0^T$.
\end{definition}

For a passive system node, $\Dfrak_0^T$ is a contraction, again by \eqref{eq:scattpass}.

\begin{remark}\label{rem:StaffansD}
Combining Definition 2.2.7 and Theorem 4.6.11 in \cite{StafBook} with our derivation of $\Dfrak_0^T$, we see that our operator $\Dfrak_0^T$ coincides with the operator represented by the same notation in \cite{StafBook}. Indeed, $\Dfrak_0^T$ maps an input signal $u\in L^2([0,T];U)$ into the corresponding output signal $y\in L^2([0,T];Y)$ of a mild trajectory $(u,x,y)$ of $\SmallSysNode$ with $x(0)=0$; see also \cite[Sect.\ 2.1]{StafBook}. 

Compared to our derivation of $\Dfrak_0^T$, Staffans \cite{StafBook} proceeds in the opposite direction. More precisely, he considers an extension $\Dfrak$ of the operator $\Dfrak_0^T$ to the space of functions in $L^2_{loc}(\R;U)$, with support bounded from the left, to be part of the definition of a well-posed system.  Using the operator $\Dfrak$, he defines $\Dfrak_0^T$ by $\Dfrak_0^T:=\pi_{[0,T]} \Dfrak\pi_{[0,T]}$ in Definition 2.2.6, and only later he defines the system node and classical trajectories.
\end{remark}

We end the section with a result that is useful when working on examples. The simple proof, which uses causality and the identity $\Dfrak_0^\tau:=\pi_{[0,\tau]} \Dfrak\pi_{[0,\tau]}$, $\tau>0$, is omitted.

\begin{proposition}\label{prop:DzeroTdecreases1}
For a well-posed system $\SmallSysNode$ with input space $U$ and output space $Y$, the norm of $\Dfrak_0^T$, $T>0$, as an operator from $L^2([0,T];U)$ to $L^2([0,T];Y)$, is a non-decreasing function of $T$. 
\end{proposition}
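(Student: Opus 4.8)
The plan is to view every truncated input/output map $\Dfrak_0^T$ as a compression of the single causal input/output map $\Dfrak$ of Remark~\ref{rem:StaffansD}, which satisfies $\Dfrak_0^\tau=\pi_{[0,\tau]}\Dfrak\pi_{[0,\tau]}$ for all $\tau>0$. Fixing $0<T_1\le T_2$, the goal is to show $\norm{\Dfrak_0^{T_1}}\le\norm{\Dfrak_0^{T_2}}$, where each norm is the operator norm from $L^2([0,T_j];U)$ to $L^2([0,T_j];Y)$. It is convenient to regard all the $\Dfrak_0^{T_j}$ as operators $L^2(\R;U)\to L^2(\R;Y)$ that vanish off the respective time windows, so that these operator norms are unchanged and all the truncations $\pi_{[0,T]}$ become norm-one projections.

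First I would record the elementary relations between the truncations. Since $[0,T_1]\subseteq[0,T_2]$, the (restrict-and-zero-extend) operators satisfy $\pi_{[0,T_1]}\pi_{[0,T_2]}=\pi_{[0,T_1]}=\pi_{[0,T_2]}\pi_{[0,T_1]}$, and each $\pi_{[0,T]}$ is a contraction on the pertinent $L^2$-space. Using these together with the defining formula for $\Dfrak_0^{T_2}$, I would establish the compression identity
$$
  \Dfrak_0^{T_1}=\pi_{[0,T_1]}\,\Dfrak_0^{T_2}\,\pi_{[0,T_1]}
$$
by collapsing the nested truncations: $\pi_{[0,T_1]}\Dfrak_0^{T_2}\pi_{[0,T_1]}=\pi_{[0,T_1]}\pi_{[0,T_2]}\Dfrak\pi_{[0,T_2]}\pi_{[0,T_1]}=\pi_{[0,T_1]}\Dfrak\pi_{[0,T_1]}=\Dfrak_0^{T_1}$. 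Conceptually this identity is precisely causality: an input that vanishes after time $T_1$ produces, on the window $[0,T_1]$, the same output whether it is fed through the system on $[0,T_1]$ or on the longer window $[0,T_2]$, because behaviour after $T_1$ cannot affect the output before $T_1$.

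Finally I would take norms in the compression identity. Since $\pi_{[0,T_1]}$ is a contraction on both the input and output sides,
$$
  \norm{\Dfrak_0^{T_1}}=\bignorm{\pi_{[0,T_1]}\,\Dfrak_0^{T_2}\,\pi_{[0,T_1]}}\le\norm{\pi_{[0,T_1]}}\,\norm{\Dfrak_0^{T_2}}\,\norm{\pi_{[0,T_1]}}\le\norm{\Dfrak_0^{T_2}},
$$
which is the claimed monotonicity. The proof is short, and the only point needing care---the mild ``main obstacle''---is to justify the compression identity cleanly, i.e.\ to make the bookkeeping of the nested truncations (equivalently, the causal reduction from the window $[0,T_2]$ to $[0,T_1]$) precise; once that is in place, the norm estimate is immediate from the contractivity of the truncation operators.
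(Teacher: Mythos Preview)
Your proof is correct and follows exactly the approach indicated in the paper, which omits the details but states that the argument uses causality together with the identity $\Dfrak_0^\tau=\pi_{[0,\tau]}\Dfrak\pi_{[0,\tau]}$. Your compression identity $\Dfrak_0^{T_1}=\pi_{[0,T_1]}\Dfrak_0^{T_2}\pi_{[0,T_1]}$ is precisely the causality statement the paper has in mind, and the norm estimate follows immediately.
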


\section{Representing $A_S$ using a passive system node}\label{sec:cayley}

We provide sufficient conditions for $A_S$ to generate a contraction semigroup by using the following theorem, which is a reformulation of Theorem 5.2 in \cite{StWe12b}. We give a new elementary and self-contained proof, where we show directly that the conditions of Definition \ref{def:sysnode} and the inequality \eqref{eq:scattpassdiff} are satisfied.
\begin{theorem}\label{thm:stwe}
If $A_{ext}=\sbm{A_1\\A_2}$ is a closed and maximal dissipative operator on the pair $\sbm{X_1\\X_2}$ of Hilbert spaces, then the \emph{external Cayley system transform}
\begin{equation}\label{eq:sysnodecayl}
\begin{aligned}
  \bbm{\AB\\\CD} &: \bbm{x\\(e-f)/\sqrt2} \mapsto \bbm{z\\(e+f)/\sqrt2},\quad \bbm{z\\f}=A_{ext}\bbm{x\\e},\\
  \dom{\SmallSysNode} &= \set{\bbm{x\\(e-f)/\sqrt2} \bigmid \bbm{x\\e}\in\dom{A_{ext}},~ f=A_2\bbm{x\\e}},
\end{aligned}
\end{equation}
of $A_{ext}$ is a passive (in particular well posed) system node, with state space $X_1$, and input and output space $X_2$.

The main operator $A$ equals $A_S$ of Definition \ref{def:AS} for $S=I$.
\end{theorem}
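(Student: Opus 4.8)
The plan is to verify directly the four defining properties of a system node in Definition~\ref{def:sysnode} together with the passivity inequality~\eqref{eq:scattpassdiff}, and then to read off the identification of the main operator. Throughout I would exploit the bounded, boundedly invertible change of variables relating the internal pair $\sbm{e\\f}$ to the external pair $\sbm{u\\y}$, namely $u=(e-f)/\sqrt2$ and $y=(e+f)/\sqrt2$, with inverse $e=(u+y)/\sqrt2$, $f=(y-u)/\sqrt2$. The first observation is that passivity is nothing but a restatement of dissipativity: since $\|u\|^2-\|y\|^2=-2\re\Ipdp{e}{f}$ while dissipativity of $A_{ext}$ reads $\re\Ipdp{z}{x}+\re\Ipdp{f}{e}\le0$ for $\sbm{z\\f}=A_{ext}\sbm{x\\e}$, subtraction gives $2\re\Ipdp{z}{x}\le\|u\|^2-\|y\|^2$ for every $\sbm{x\\u}\in\dom{\SmallSysNode}$, which is~\eqref{eq:scattpassdiff}. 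I would also record at the outset that, by Lumer--Phillips (Theorem~\ref{thm:lumphi}), the closed maximal dissipative $A_{ext}$ has $\cplus\subset\res{A_{ext}}$ with $(I-A_{ext})^{-1}$ a contraction on $\sbm{X_1\\X_2}$; it is this boundedness, and not merely surjectivity of $I-A_{ext}$, that drives the closedness argument below.

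I would next check that $\SmallSysNode$ is a single-valued closed operator. For single-valuedness, if $\sbm{x\\e_1}$ and $\sbm{x\\e_2}$ in $\dom{A_{ext}}$ yield the same $u$, then $\sbm{0\\e_1-e_2}\in\dom{A_{ext}}$ satisfies $e_1-e_2=f_1-f_2$, and applying dissipativity to it gives $\|e_1-e_2\|^2\le0$, so $e_1=e_2$ and the outputs $z,y$ are determined. For Property~1 (closedness of $\SmallSysNode$), a sequence $\sbm{x_n\\u_n}\to\sbm{x\\u}$ with $\SmallSysNode\sbm{x_n\\u_n}=\sbm{z_n\\y_n}\to\sbm{z\\y}$ yields, through the change of variables, $e_n\to(u+y)/\sqrt2=:e$ and $f_n\to(y-u)/\sqrt2$; closedness of $A_{ext}$ then puts $\sbm{x\\e}\in\dom{A_{ext}}$ with $A_{ext}\sbm{x\\e}=\sbm{z\\(y-u)/\sqrt2}$, which is exactly $\sbm{x\\u}\in\dom{\SmallSysNode}$ and $\SmallSysNode\sbm{x\\u}=\sbm{z\\y}$. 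For Property~2 (closedness of $\AB$) one controls only $x_n\to x$, $u_n\to u$ and $z_n\to z$, so the previous trick is unavailable; instead I would write $(I-A_{ext})\sbm{x_n\\e_n}=\sbm{x_n-z_n\\\sqrt2\,u_n}$, whose right-hand side converges, and apply the bounded operator $(I-A_{ext})^{-1}$ to deduce that $\sbm{x_n\\e_n}$ converges. Its limit has first component $x$, lies in $\dom{A_{ext}}$, and satisfies $A_{ext}\sbm{x\\e}=\sbm{z\\e-\sqrt2\,u}$, giving $\sbm{x\\u}\in\dom{\SmallSysNode}$ and $\AB\sbm{x\\u}=z$.

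Property~4 follows from surjectivity of $I-A_{ext}$: given $u\in X_2$, solving $(I-A_{ext})\sbm{x\\e}=\sbm{0\\\sqrt2\,u}$ produces $\sbm{x\\e}\in\dom{A_{ext}}$ with $e-f=\sqrt2\,u$, i.e.\ $\sbm{x\\u}\in\dom{\SmallSysNode}$. For Property~3 I would first identify the main operator: $\sbm{x\\0}\in\dom{\SmallSysNode}$ forces $u=0$, hence $e=f=A_2\sbm{x\\e}$, and then $Ax=z=A_1\sbm{x\\e}$; comparing with Definition~\ref{def:AS} for $S=I$ (where $\dom S=X_2$ and the constraint is $e=f$) shows $A=A_S$, which also settles the final assertion of the theorem. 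Since $A_S$ is dissipative by~\eqref{eq:ASdiss}, it remains to exhibit a resolvent point in $\cplus$: solving $(I-A_{ext})\sbm{x\\e}=\sbm{g\\0}$ for arbitrary $g\in X_1$ produces $x\in\dom A$ with $(I-A)x=g$, so $\range{I-A}=X_1$ and Lumer--Phillips (item~4 of Theorem~\ref{thm:lumphi}) gives that $A$ generates a contraction semigroup. With all four properties in hand, $\SmallSysNode$ is a system node, and the passivity inequality from the first paragraph makes it passive by \cite[Thm~11.1.5]{StafBook}, hence well-posed with $M_T=1$.

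The step I expect to require the most care is Property~2, the closedness of $\AB$: because $\AB$ retains only the first output component $z$, convergence of $y_n$---equivalently of $e_n$ and $f_n$ separately---is not given, and the argument used for $\SmallSysNode$ itself breaks down. The resolution is to feed in the quantitative consequence of maximal dissipativity, namely the boundedness of $(I-A_{ext})^{-1}$, rather than mere surjectivity of $I-A_{ext}$. A secondary point to state cleanly is that applying item~4 of Theorem~\ref{thm:lumphi} to $A_S$ is legitimate because on a Hilbert space a dissipative operator with full range for $I-A_S$ is automatically closed and densely defined.
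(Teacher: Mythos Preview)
Your proof is correct and follows the same overall architecture as the paper's: establish the passivity inequality from dissipativity, then verify the four system-node axioms directly, using $1\in\res{A_{ext}}$ for Properties~3 and~4 and closedness of $A_{ext}$ for Property~1.

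The one genuine difference is your treatment of Property~2 (closedness of $\AB$). You invoke the \emph{boundedness} of $(I-A_{ext})^{-1}$: from $(I-A_{ext})\sbm{x_n\\e_n}=\sbm{x_n-z_n\\\sqrt2\,u_n}$ and convergence of the right-hand side you conclude that $\sbm{x_n\\e_n}$ converges, and then finish with closedness of $A_{ext}$. The paper instead stays inside the system-node picture: it applies the already-established passivity inequality~\eqref{eq:scattpassdiff} to differences to get
\[
  \|y_n-y_m\|^2 \le \|u_n-u_m\|^2 - 2\re\Ipdp{z_n-z_m}{x_n-x_m},
\]
so $y_n$ is Cauchy, and then the closedness of $\SmallSysNode$ (Property~1) finishes the job. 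Your route is a bit shorter and uses only the resolvent estimate; the paper's route has the conceptual advantage of showing that for \emph{any} operator satisfying~\eqref{eq:scattpassdiff}, closedness of $\SmallSysNode$ already forces closedness of $\AB$, without further appeal to the structure of $A_{ext}$. A minor additional difference is that you include an explicit single-valuedness check, which the paper leaves implicit.
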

\begin{proof}
The following useful equivalence is straightforward to verify:
\begin{equation}\label{eq:caylandinv}
  u=\frac{e-f}{\sqrt2}\quad\text{and}\quad y=\frac{e+f}{\sqrt2}\quad\Longleftrightarrow\quad
  f=\frac{y-u}{\sqrt2}\quad\text{and}\quad e=\frac{y+u}{\sqrt2}.
\end{equation}

In order to prove \eqref{eq:scattpassdiff}, we let $\sbm{x\\u}\in\dom{\SmallSysNode}$ be arbitrary, and we set $\sbm{z\\y}:=\SmallSysNode\sbm{x\\u}$, $e:=(y+u)/\sqrt 2$, and $f:=(y-u)/\sqrt 2$. Then $\sbm{x\\e}\in\dom{A_{ext}}$ and $\sbm{z\\f}=A_{ext}\sbm{x\\e}$ by \eqref{eq:sysnodecayl} and \eqref{eq:caylandinv}, and the dissipativity of $A_{ext}$ yields
\begin{equation}\label{eq:energyineqpf}
\begin{aligned}
  0 &\geq 2\re\Ipdp{\bbm{z\\f}}{\bbm{x\\e}} 
  = 2\re\Ipdp{z}{x}+2\re\Ipdp{\frac{y-u}{\sqrt2}}{\frac{y+u}{\sqrt2}} \\
  &\geq 2\re\Ipdp{z}{x} + \|y\|^2 - \|u\|^2.
\end{aligned}
\end{equation}
We have proved \eqref{eq:scattpassdiff}, and setting $u=0$, we obtain for all $x\in\dom A$ that $z=Ax$ and $2\re \Ipdp{Ax}{x}\leq -\|y\|^2\leq 0$, where $A$ is the main operator of $\SmallSysNode$; see \eqref{eq:Adef}. Hence, $A$ is dissipative.

As $A_{ext}$ is \emph{maximal} dissipative, $1\in\res{A_{ext}}$ by the Lumer-Phillips Theorem \ref{thm:lumphi}, which implies that the operator $I-A_{ext}$ has range $\sbm{X_1\\X_2}$. Therefore, for arbitrary $\wtsmash x\in X_1$ and $\wtsmash u\in X_2$ there exists an $\sbm{x\\e}\in\dom{A_{ext}}$ such that
$$
  \bbm{\wtsmash x\\\sqrt2\, \wtsmash u} = \left(\bbm{I&0\\0&I}-A_{ext}\right)\bbm{x\\e} = \bbm{x-z\\e-f}
      \quad \text{with}\quad \bbm{z\\f}=A_{ext}\bbm{x\\e}.
$$
Comparing this to \eqref{eq:sysnodecayl}, we see that condition 4 of Definition \ref{def:sysnode} is met. Moreover, setting $\wtsmash u=0$, we see that $I-A$ is surjective, and since we already know that $A$ is dissipative, we can conclude that $A$ is maximal dissipative, hence the generator of a contraction semigroup. Thus condition 3 of Definition \ref{def:sysnode} is also met.

Next we prove that $\SmallSysNode$ inherits closedness from $A_{ext}$. Indeed, let $\sbm{x_n\\u_n}\in\dom{\SmallSysNode}$, $\sbm{z_n\\y_n}=\SmallSysNode\sbm{x_n\\u_n}$, $x_n\to x$ and $z_n\to z$ in $X_1$, and $u_n\to u$ and $y_n\to y$ in $X_2$. Then 
$$
  e_n := \frac{y_n+u_n}{\sqrt 2}\to\frac{y+u}{\sqrt2}=:e\quad\text{and}\quad
  f_n := \frac{y_n-u_n}{\sqrt 2}\to \frac{y-u}{\sqrt2}=:f
$$ 
in $X_2$, and moreover $\sbm{x_n\\e_n}\in\dom{A_{ext}}$ with $\sbm{z_n\\f_n}=A_{ext}\sbm{x_n\\e_n}$ due to \eqref{eq:sysnodecayl} and \eqref{eq:caylandinv}. By the closedness of $A_{ext}$, $\sbm{x\\e}\in\dom{A_{ext}}$ and $\sbm{z\\f}=A_{ext}\sbm{x\\e}$, and since $u=(e-f)/\sqrt 2$ and $y=(e+f)/\sqrt 2$ by \eqref{eq:caylandinv}, we obtain from \eqref{eq:sysnodecayl} that $\sbm{x\\u}\in\dom{\SmallSysNode}$ and $\sbm{z\\y}=\SmallSysNode\sbm{x\\u}$. We have proved that $\SmallSysNode$ is closed, and so condition 1 of Definition \ref{def:sysnode} is met.

Finally we need to show that condition 2 of Definition \ref{def:sysnode} is satisfied. Therefore we assume that $\sbm{x_n\\u_n}\in\dom{\SmallSysNode}$, $\sbm{z_n\\y_n}=\SmallSysNode\sbm{x_n\\u_n}$, $x_n\to x$ and $z_n\to z$ in $X_1$, and $u_n\to u$ in $X_2$. Then $x_n$, $z_n$, and $u_n$ are all Cauchy sequences such that $\sbm{z_n-z_m\\y_n-y_m}=\SmallSysNode\sbm{x_n-x_m\\u_n-u_m}$, and combining \eqref{eq:energyineqpf} with the Cauchy-Schwarz inequality, we obtain that
$$
\begin{aligned}
  \|y_n-y_m\|^2 &\leq \|u_n-u_m\|^2 - 2\re\Ipdp{z_n-z_m}{x_n-x_m} \\
  &\leq \|u_n-u_m\|^2 + 2\|z_n-z_m\|\|x_n-x_m\|.
\end{aligned}
$$
This implies that $y_n$ is also a Cauchy sequence in $X_2$. Hence $y_n$ also converges to some $y\in X_2$ and by the closedness of $\SmallSysNode$, we have that $\sbm{x\\u}\in\dom{\AB}$ and $\AB\sbm{x\\u}=z$. We conclude that $\AB$ is closed.

By equation (\ref{eq:Adef}), $x$ is mapped to $z=Ax$ whenever $u=0$. However, $u=0$ corresponds to $e=f$, see (\ref{eq:caylandinv}). Hence $e=If$, and so (\ref{eq:ASDef}) gives $z=A_Ix$.
\end{proof}

In \cite{StafMaxdiss} it was shown that there exist maximal scattering dissipative operators which are not closed, and so the closedness assumption in Theorem \ref{thm:stwe} is essential.

The following alternative representation of the operator $\SmallSysNode$ in \eqref{eq:sysnodecayl} is useful in computations; see also \cite{StWe12a}:

\begin{proposition}\label{prop:caylalt}
Let $A_{ext}=\sbm{A_1\\A_2}$ be a dissipative operator on the pair $\sbm{X_1\\X_2}$ of Hilbert spaces and define $\SmallSysNode$ by \eqref{eq:sysnodecayl}. Then the operator $\sbm{\sqrt2\,I&0\\0&I}-\sbm{0\\A_2}$ maps $\dom{A_{ext}}$ one to one onto $\dom{\SmallSysNode}$ and 
\begin{equation}\label{eq:syscaylopchar}
\begin{aligned}
  \bbm{\AB\\\CD} &= \bbm{\sqrt 2\,A_1\\A_2+\bbm{0&I}} \left(\bbm{\sqrt2\,I&0\\0&I}-\bbm{0\\A_2}\right)^{-1}\quad\text{with}\\
  \dom{\SmallSysNode} &= \left(\bbm{\sqrt2\,I&0\\0&I}-\bbm{0\\A_2}\right)\dom{A_{ext}}.
\end{aligned}
\end{equation}

In particular, if there exist linear operators $A_{12}$ and $A_{21}$,
such that $A_1\sbm{x\\e}=A_{12}\,e$ and
$A_2\sbm{x\\e}=A_{21}x$ for all
$\sbm{x\\e}\in\dom{A_{ext}}$,\footnote{By writing that $A_2\sbm{x\\e}=A_{21}x$ for all
$\sbm{x\\e}\in\dom{A_{ext}}$, we mean that the given operator $A_2$ has the property that $A_2\sbm{x\\e_1}=A_2\sbm{x\\e_2}$ whenever $\sbm{x\\e_1},\sbm{x\\e_2}\in\dom{A_{ext}}$. Then we set $\dom{A_{21}}:=\set{x\bigmid\sbm{x\\e}\in\dom{A_{ext}}}$ and  $A_{21}x:=A_2\sbm{x\\e}$, where $\sbm{x\\e}\in\dom{A_{ext}}$. The same is meant for $A_1$.} 
 then
\begin{equation}\label{eq:syscaylopcharsimp}
\begin{aligned}
  \bbm{\AB\\\CD} &= \bbm{A_{S=I}\,\&\,(\sqrt2\,A_{12})\\(\sqrt2\,A_{21})\,\&\,I},\\
  \dom{\SmallSysNode} &= \bbm{\sqrt2\,I& 0\\-A_{21}&I} \dom{A_{ext}},
\end{aligned}
\end{equation}
where $A_{S=I}:=A_{12}A_{21}$, cf.\ \eqref{eq:ASintro}.
\end{proposition}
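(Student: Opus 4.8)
The plan is to reduce all three assertions to direct computations with the single operator $\Phi:=\sbm{\sqrt2\,I&0\\0&I}-\sbm{0\\A_2}$ acting on $\dom{A_{ext}}$, the only non-algebraic ingredient being dissipativity, which I use exclusively to show that $\Phi$ is injective. First I would record the action of $\Phi$: for $\sbm{x\\e}\in\dom{A_{ext}}$, setting $f:=A_2\sbm{x\\e}$, one gets $\Phi\sbm{x\\e}=\sbm{\sqrt2\,x\\e-f}=\sqrt2\,\sbm{x\\(e-f)/\sqrt2}$. By \eqref{eq:sysnodecayl} the vector $\sbm{x\\(e-f)/\sqrt2}$ is a typical generator of $\dom{\SmallSysNode}$, so $\Phi\sbm{x\\e}$ is a scalar multiple of such a generator. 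Since both $\dom{A_{ext}}$ and $\dom{\SmallSysNode}$ are linear subspaces and rescaling $\sbm{x\\e}$ by $\lambda$ rescales $f$ by $\lambda$, this scale-invariance yields both inclusions at once: $\Phi\,\dom{A_{ext}}\subseteq\dom{\SmallSysNode}$ because the image lies in the subspace $\dom{\SmallSysNode}$, and $\dom{\SmallSysNode}\subseteq\Phi\,\dom{A_{ext}}$ because $\sbm{x\\(e-f)/\sqrt2}=\Phi\bigl(\tfrac1{\sqrt2}\sbm{x\\e}\bigr)$. This establishes the domain identity in \eqref{eq:syscaylopchar}.

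The crux is injectivity of $\Phi$, and here I would invoke dissipativity. If $\Phi\sbm{x\\e}=0$, then $x=0$ and $e=f=A_2\sbm{0\\e}$; writing $\sbm{z\\f}=A_{ext}\sbm{0\\e}$, dissipativity gives $0\ge\re\Ipdp{\sbm{z\\f}}{\sbm{0\\e}}=\re\Ipdp{f}{e}=\re\Ipdp{e}{e}=\|e\|^2$, whence $e=0$. Thus $\Phi$ is a bijection from $\dom{A_{ext}}$ onto $\dom{\SmallSysNode}$; in particular, for each $\sbm{a\\b}\in\dom{\SmallSysNode}$ the representing pair $\sbm{x\\e}=\Phi^{-1}\sbm{a\\b}$ is unique.

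To verify the operator formula I would compose and compare with the definition, tracking the $\sqrt2$ factors. For $\sbm{a\\b}\in\dom{\SmallSysNode}$ put $\sbm{x\\e}:=\Phi^{-1}\sbm{a\\b}$, so $a=\sqrt2\,x$ and $b=e-f$ with $\sbm{z\\f}=A_{ext}\sbm{x\\e}$; applying $\sbm{\sqrt2\,A_1\\A_2+\sbm{0&I}}$ then gives $\sbm{\sqrt2\,z\\f+e}$. On the other hand, evaluating $\SmallSysNode\sbm{a\\b}$ straight from \eqref{eq:sysnodecayl} uses the representation $\sbm{a\\\hat e}\in\dom{A_{ext}}$ with $b=(\hat e-\hat f)/\sqrt2$; by homogeneity of $A_{ext}$ this forces $\hat e=\sqrt2\,e$, $\hat f=\sqrt2\,f$, so the output $\sbm{\hat z\\(\hat e+\hat f)/\sqrt2}$ equals $\sbm{\sqrt2\,z\\e+f}$, matching the composition. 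This proves \eqref{eq:syscaylopchar}. For \eqref{eq:syscaylopcharsimp} I would substitute $A_1\sbm{x\\e}=A_{12}e$ and $A_2\sbm{x\\e}=A_{21}x$: then $\Phi=\sbm{\sqrt2\,I&0\\-A_{21}&I}$ on $\dom{A_{ext}}$ (giving the stated domain), and eliminating $x=a/\sqrt2$, $e=b+A_{21}a/\sqrt2$ in the two components of $\SmallSysNode\sbm{a\\b}=\sbm{\sqrt2\,A_{12}e\\A_{21}x+e}$ yields $A_{12}A_{21}a+\sqrt2\,A_{12}b=A_{S=I}a+\sqrt2\,A_{12}b$ and $\sqrt2\,A_{21}a+b$, which is exactly the claimed action.

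I expect the main obstacle to be purely organizational: keeping the $\sqrt2$ scalings consistent and exploiting the scale-invariance of the domains so that the apparent factor-$\sqrt2$ mismatch between $\Phi\sbm{x\\e}$ and the generators of $\dom{\SmallSysNode}$ never causes trouble. The single genuinely essential (non-algebraic) step is the use of dissipativity to kill the kernel of $\Phi$.
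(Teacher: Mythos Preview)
Your proposal is correct and follows essentially the same approach as the paper: compute the action of $\Phi$, establish injectivity via dissipativity, read off the domain from the definition \eqref{eq:sysnodecayl}, and then verify the operator identity by direct substitution. The only minor difference is that the paper's injectivity argument routes through the scattering-passivity inequality \eqref{eq:energyineqpf} (established in the proof of Theorem~\ref{thm:stwe}) to conclude $y=0$ and hence $e=0$, whereas you apply dissipativity of $A_{ext}$ directly to $\sbm{0\\e}$; your version is self-contained and arguably cleaner for this proposition, which assumes only dissipativity.
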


The notation in the second part of the proposition is analogous to that in Theorem \ref{thm:parahypconf}.

\begin{proof}
Fix $\sbm{x\\e}\in\dom{A_{ext}}$ arbitrarily and set $\sbm{z\\f}:=A_{ext}\sbm{x\\e}$, $u:=(e-f)/\sqrt2$, and $y:=(e+f)/\sqrt2$. It then follows that $e-A_2\sbm{x\\e}=\sqrt2 u$ and by \eqref{eq:caylandinv} also $y=\sqrt2 A_2 \sbm{x\\e}+u$. Hence
\begin{equation}\label{eq:caylaltrepcalc}
  \left(\bbm{\sqrt2\,I&0\\0&I}-\bbm{0\\A_2}\right)\bbm{x\\e}=\sqrt2\bbm{x\\u}\quad\text{and}\quad \bbm{z\\y}=\bbm{A_1\\\sqrt2\,A_2}\bbm{x\\e}+\bbm{0\\u}.
\end{equation}
We next prove that the operator $\sbm{\sqrt2\,I&0\\0&I}-\sbm{0\\A_2}$ is injective and therefore assume that $\left(\sbm{\sqrt2\,I&0\\0&I}-\sbm{0\\A_2}\right)\sbm{x\\e}=0$. Then $x=0$ and $f=A_2\sbm{0\\e}=e$, which implies that $u=\frac{e-f}{\sqrt 2}=0$, and it follows from \eqref{eq:energyineqpf} that $y=0$, which in turn implies that $e=\frac{y+u}{\sqrt2}=0$. This shows that $\sbm{\sqrt2\,I&0\\0&I}-\sbm{0\\A_2}$ is injective. The domain of this operator is clearly $\dom{A_{ext}}$, and by \eqref{eq:sysnodecayl} its range is $\dom{\SmallSysNode}$. 

Moreover, \eqref{eq:caylaltrepcalc} yields that
$$
\begin{aligned}
  \bbm{\AB\\\CD}\bbm{x\\u} &= \bbm{z\\y} = \bbm{A_1\\\sqrt2\,A_2}\left(\bbm{\sqrt2\,I&0\\0&I}-\bbm{0\\A_2}\right)^{-1}\sqrt2\bbm{x\\u}+\bbm{0\\u} \\
  &= \left(\bbm{\sqrt2\,A_1\\2A_2}+\bbm{0&0\\0&I}\left(\bbm{\sqrt2\,I&0\\0&I}-\bbm{0\\A_2}\right)\right) \\
    &\qquad\qquad\times \left(\bbm{\sqrt2\,I&0\\0&I}-\bbm{0\\A_2}\right)^{-1}\bbm{x\\u}\\
  &= \bbm{\sqrt2\,A_1\\A_2+\bbm{0&I}}\left(\bbm{\sqrt2\,I&0\\0&I}-\bbm{0\\A_2}\right)^{-1}\bbm{x\\u}
\end{aligned}
$$
for all $\sbm{x\\u}\in\dom{\SmallSysNode}$, and the first assertion is
proved. From here \eqref{eq:syscaylopcharsimp} follows easily.
\end{proof}

If one assumes more structure of $A_{ext}$ in the preceding proposition, essentially that $A_{ext}$ is a system node, then one can alternatively obtain the result by flow inversion, using \cite[Thm 6.3.9]{StafBook}. 

We now continue the example in the introduction, where $A_{ext}$ in \eqref{eq:AextIntro} is a skew-adjoint operator that is not a system node.

\begin{example}\label{ex:wavesysnode}
The operator $A_{ext}=\sbm{0&\mathrm{div}\\\mathrm{grad}&0}$ with $\dom{A_{ext}}:= \sbm{H^1_0(\Omega)\\H^{\mathrm{div}}(\Omega)}$ is \emph{not} a system node with input space $L^2(\Omega)^n$, because 
$$
  \set{u\in L^2(\Omega)^n\bigmid \exists x\in L^2(\Omega):~
    \bbm{x\\u}\in\dom{A_{ext}}}=H^{\mathrm{div}}(\Omega),
$$
which is a proper subspace of $L^2(\Omega)^n$, and so condition 4 of Definition \ref{def:sysnode} is violated. Moreover, the ``main operator'' of $A_{ext}$ is zero:
$$
  x \mapsto \bbm{0&\mathrm{div}} \bbm{x\\0} = 0, \quad \bbm{x\\0}\in\dom{A_{ext}},~\text{i.e.,}~ x\in H_0^1(\Omega),
$$
and the ``control operator'' $\mathrm{div}$ is unbounded from $L^2(\Omega)^n$ into $L^2(\Omega)$, and so $A_{ext}$ also fails the standard test that the main operator should be the most unbounded operator of the system node.

Although $A_{ext}$ is not a system node, it is maximal dissipative and closed (even self-adjoint; see \cite[Cor.\ 3.4]{parahyp_divgrad}), and hence the extended Cayley system transform $\SmallSysNode$ of $A_{ext}$ is a system node; see Theorem \ref{thm:stwe}. The state space of $\SmallSysNode$ is $X=L^2(\Omega)$, the input and output spaces are $U=Y=L^2(\Omega)^n$, and according to Proposition \ref{prop:caylalt}, the system node itself is given by:
\begin{equation}\label{eq:waveextcayl}
  \bbm{\AB\\\CD} = \bbm{\Delta \,\&\, \sqrt 2\,\mathrm{div} \\ 
    \sqrt 2\,\mathrm{grad} \,\&\, I}\Bigg|_{\dom{\SmallSysNode}},
\end{equation}
where
\begin{equation}\label{eq:waveextcaylDom}
\begin{aligned}
  \dom{\SmallSysNode} &= \set{\bbm{\sqrt 2\,x\\ e-\Grad x}
    \in\bbm{L^2(\Omega)\\L^2(\Omega)^n}\biggmid 
    \bbm{x\\e}\in \bbm{H^1_0(\Omega)\\H^{\mathrm{div}}(\Omega)}}.
\end{aligned}
\end{equation}
Here the main operator $A$ equals the \emph{Laplacian} $\Delta x:=\Div(\Grad x)$ defined on
$$
  \dom{\Delta} = \set{x\bigmid \bbm{x\\0}\in\dom{\SmallSysNode}}=\set{x\in H_0^1(\Omega)\bigmid \Grad x\in H^{\mathrm{div}}(\Omega)}.
$$
We can confirm that $A$ of $\SmallSysNode$ is the most unbounded operator of $\SmallSysNode$.

The PDE associated to the operator $\SmallSysNode$ in \eqref{eq:waveextcayl}--\eqref{eq:waveextcaylDom} is
\begin{equation}\label{eq:wavetoheat}
  \left\{\begin{aligned}
    \frac{\partial x}{\partial t}(\xi,t) &= \Delta x(\xi,t) + \sqrt2\, \Div u(\xi,t) \\
    y(\xi,t) &= \sqrt2\, \Grad x(\xi,t) + u(\xi,t),\quad \text{a.e.}~\xi\in\Omega,~t\geq0,\\
    x(\xi,0) &= x_0(\xi),\quad \text{a.e.}~\xi\in\Omega,\\   
    x(\xi,t) &= 0,\quad \text{a.e.}~\xi\in\partial\Omega,~t\geq0.
       \end{aligned}\right.
\end{equation}
Thus, the external Cayley system transformation of the wave equation is the heat equation with constant thermal conductivity $\alpha(\cdot)=I$ and control and observation along all of the spatial domain.
\end{example}

In the definition \eqref{eq:ASDef} of $A_S$, we expressed $A_S$ in terms of $A_{ext}$, and we now proceed to express $A_S$ in terms of the transform $\SmallSysNode$. 
Combining \eqref{eq:ASDef} and \eqref{eq:sysnodecayl}, we see that $x\in\dom{A_S}$ and $z=A_Sx$ if and only if
\begin{equation}\label{eq:sysnodeAS}
\begin{aligned}
  \exists f\in\dom S& ,e\in X_2:\quad \bbm{x\\e}\in\dom{A_{ext}},~ \bbm{z\\f}=A_{ext}\bbm{x\\e},~e = Sf\\
  \Longleftrightarrow\qquad& \exists f\in \dom S,e\in X_2:\quad \bbm{x\\(e-f)/\sqrt2}\in\dom{\SmallSysNode},\\
   &\quad \text{and}\quad\bbm{z\\(e+f)/\sqrt2}=\bbm{\AB\\\CD}\bbm{x\\(e-f)/\sqrt2},~ e=Sf\\
  \Longleftrightarrow\qquad& \exists u,y\in X_2:\quad y-u\in\dom S,~\bbm{x\\u}\in\dom{\SmallSysNode},\\
   &\quad \text{and}\quad\bbm{z\\y}=\bbm{\AB\\\CD}\bbm{x\\u},~ \frac{y+u}{\sqrt2}=S\frac{y-u}{\sqrt2}.
\end{aligned}
\end{equation}

Since $\SmallSysNode$ is a well-posed system node, contrary to $A_{ext}$, it now makes sense to write the equation $y+u=S(y-u)$ in the form $u=Ky$ and interpret $K$ as an output feedback operator for $\SmallSysNode$. We next show that $y-u\in\dom S$ and $y+u=S(y-u)$ if and only if $u=Ky$, where
\begin{equation}\label{eq:Kdef}
  K:=(S-I)(S+I)^{-1}.
\end{equation}
We call this $K$ the \emph{operator Cayley transform} of the maximal accretive operator $S$.

It is important to pay attention to the condition $\delta\geq0$ versus
the condition $\delta>0$ in \eqref{eq:accretivity} below. If
$\delta=0$ then $S$ is only accretive, whereas $\delta>0$ implies that
$S$ is \emph{uniformly} accretive. Neither of these conditions alone
implies any kind of maximality; see the second assertion in the following lemma.

\begin{lemma}\label{lem:Scayl1}
The following claims are true:
\begin{enumerate}
\item Let $S$ be a closed and maximal accretive operator on $X_2$. Then $S+I$ has a bounded inverse and the operator $K$ in \eqref{eq:Kdef} is an everywhere-defined contraction on $X_2$, i.e., $\|K\|\leq 1$. 

The contraction $K$ has the additional property that $I-K$ is injective with range dense in $X_2$, and $S$ can be recovered from $K$ using the formula
\begin{equation}\label{eq:SfromK}
  S = (I+K)(I-K)^{-1} \quad\text{with}\quad \dom{S} = \range{I-K}.
\end{equation}
\item If $S$ is a closed and accretive and everywhere-defined operator on $X_2$, then $S$ is bounded and maximal accretive.
\item If $S$ is closed, defined on all of $X_2$, and uniformly accretive, i.e., there exists a $\delta>0$ such that
\begin{equation}\label{eq:accretivity}
  \re \Ipdp{Sf}{f}\geq \delta \|f\|^2,\quad f\in X_2,
\end{equation}
then $K$ in \eqref{eq:Kdef} is a strict contraction: 
$$
  \|K\|\leq \varepsilon <1\quad \text{where}\quad \varepsilon:=\sqrt{ 1 - \frac{4\delta}{\|S+I\|^2}}.
$$ 
\end{enumerate}
\end{lemma}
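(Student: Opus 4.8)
The plan is to base everything on the single algebraic identity
\[
  \norm{(S+I)f}^2 - \norm{(S-I)f}^2 = 4\re\Ipdp{Sf}{f},\qquad f\in\dom S,
\]
obtained by expanding both norms. For claim~1 I would first establish that $S+I$ is boundedly invertible: since $S$ is closed and maximal accretive, $-S$ is closed and maximal dissipative, so the Lumer--Phillips Theorem~\ref{thm:lumphi} gives $\cplus\subset\res{-S}$; evaluating at $1\in\cplus$ shows that $I-(-S)=S+I$ has a bounded inverse, and the same theorem (item~3) forces $-S$, hence $S$, to be densely defined. Consequently $K=(S-I)(S+I)^{-1}$ is defined on all of $X_2$. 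Writing a generic $g\in X_2$ as $g=(S+I)f$ with $f:=(S+I)^{-1}g\in\dom S$, the identity above becomes $\norm{g}^2-\norm{Kg}^2=4\re\Ipdp{Sf}{f}\geq 0$ by accretivity, which yields $\norm K\leq 1$.

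For the remaining parts of claim~1 I would use the elementary operator identities $I-K=2(S+I)^{-1}$ and $I+K=2S(S+I)^{-1}$, both obtained by substituting $I=(S+I)(S+I)^{-1}$ and collecting terms. The first identity shows directly that $I-K$ is injective with $\range{I-K}=\dom S$, which is dense by the previous paragraph; the second, combined with $(I-K)^{-1}=\tfrac12(S+I)$ on $\dom S$, gives $(I+K)(I-K)^{-1}=2S(S+I)^{-1}\cdot\tfrac12(S+I)=S$ on $\range{I-K}=\dom S$, which is exactly \eqref{eq:SfromK}.

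Claim~2 is the quickest: a closed operator defined on all of the Hilbert space $X_2$ is bounded by the Closed Graph Theorem, and since $\dom S=X_2$ admits no proper extension, an everywhere-defined accretive $S$ is automatically maximal accretive.

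For claim~3 I would recycle the computation of claim~1 with the sharper lower bound coming from \eqref{eq:accretivity}: uniform accretivity gives $\norm{g}^2-\norm{Kg}^2=4\re\Ipdp{Sf}{f}\geq 4\delta\norm f^2$. Since $g=(S+I)f$ forces $\norm f\geq \norm g/\norm{S+I}$ (and $\norm{S+I}<\infty$ because $S$ is bounded by claim~2), combining gives $\norm{Kg}^2\leq\bigl(1-4\delta/\norm{S+I}^2\bigr)\norm g^2$; non-negativity of the left-hand side shows the parenthesized factor is non-negative, so $\varepsilon$ is a well-defined real number, and $\delta>0$ makes it strictly less than $1$. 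The step I expect to be the main obstacle is the domain bookkeeping in the recovery formula $S=(I+K)(I-K)^{-1}$: because $S$ is unbounded in general, one must verify that $(I-K)^{-1}=\tfrac12(S+I)$ is meaningful only on $\range{I-K}=\dom S$ and that the composition reproduces $S$ exactly on that domain, neither larger nor smaller. Everything else reduces to the one norm identity together with the invertibility of $S+I$, which is the linchpin supplied by Lumer--Phillips.
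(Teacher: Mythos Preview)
Your proposal is correct and follows essentially the same route as the paper: both hinge on the identity $\norm{(S+I)f}^2-\norm{(S-I)f}^2=4\re\Ipdp{Sf}{f}$ (the paper writes it as an inner-product expansion of $\|Ky\|^2-\|y\|^2$), the operator identities $I\pm K=2S^{\,0,1}(S+I)^{-1}$, and the Lumer--Phillips theorem for invertibility of $S+I$ and density of $\dom S$. The only cosmetic difference is that you state the norm identity upfront as a unifying principle, whereas the paper derives it inline; your additional remark that non-negativity of $\|Kg\|^2$ ensures $\varepsilon$ is real is a small clarification the paper leaves implicit.
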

\begin{proof}
Assertion 2 holds because $S$ is accretive and bounded (by the closed graph theorem), and clearly $S$ has no proper extension to an operator on $X_2$.

Now assume that $S$ is an arbitrary closed and maximal accretive operator on $X_2$. Then $-S$ is closed and maximal dissipative, and hence $-1\in\res{S}$ by the Lumer-Phillips Theorem \ref{thm:lumphi}, and so $S+I$ is boundedly invertible. Moreover, $K$ is a contraction because the accretivity of $S$ implies that for all $y\in\range{S+I}=X_2$:
\begin{equation}\label{eq:Tcontr}
\begin{aligned}
  \|Ky\|^2 -\|y\|^2 &= \Ipdp{(S-I)(S+I)^{-1}y}{(S-I)(S+I)^{-1}y} \\
  &\qquad - \Ipdp{(S+I)(S+I)^{-1}y}{(S+I)(S+I)^{-1}y} \\
  &= -4 \re \Ipdp{S(S+I)^{-1}y}{(S+I)^{-1}y} \leq 0.
\end{aligned}
\end{equation}
It follows directly from $K=(S-I)(S+I)^{-1}$ that $I+K=2S(S+I)^{-1}$ and $I-K=2(S+I)^{-1}$, so that $I-K$ is injective with $\range{I-K}=\dom S$ and $(I+K)(I-K)^{-1}=S$. According to Theorem \ref{thm:lumphi}, $\range{I-K}=\dom S$ is dense in $X_2$, and this finishes the proof of assertion one.

Now assume that $S$ is bounded with $\dom S=X_2$ and $\re \Ipdp{Sf}{f}\geq\delta\|f\|^2$ for some $\delta>0$ and all $f\in X_2$. Then it holds for all $f\in X_2$ that
$$
  \re \Ipdp{Sf}{f}\geq\delta\|f\|^2 \geq \frac{\delta}{\|S+I\|^2}\|S+I\|^2\|f\|^2 \geq \frac{\delta}{\|S+I\|^2}\|(S+I)f\|^2,
$$
and choosing $f:=(S+I)^{-1}y$ for an arbitrary $y\in X_2$, we obtain that
$$
  \frac{\delta}{\|S+I\|^2}\|y\|^2 \leq \re \Ipdp{S(S+I)^{-1}y}{(S+I)^{-1}y}\quad \forall y\in X_2.
$$
Thus we can sharpen \eqref{eq:Tcontr} into
$$
\begin{aligned}
  \frac{\|Ky\|^2}{\|y\|^2} &=  \frac{\|y\|^2-4 \re \Ipdp{S(S+I)^{-1}y}{(S+I)^{-1}y}}{\|y\|^2}
  \leq 1 - \frac{4\delta}{\|S+I\|^2},
\end{aligned}
$$
and therefore $\|K\|\leq \sqrt{ 1 - 4\delta/\|S+I\|^2}<1$, as claimed in assertion 3.
\end{proof}

The following lemma gives a converse to the preceding result:

\begin{lemma}\label{lem:Scayl2}
Assume that $K$ is an everywhere-defined contraction with $I-K$ injective. Then $S$ defined by \eqref{eq:SfromK} is a maximal accretive, in general unbounded but densely defined and closed, operator on $X_2$. 

The operator $S+I$ has a bounded inverse defined on all of $X_2$ and $K$ can be recovered from $S$ using \eqref{eq:Kdef}. Moreover, \eqref{eq:accretivity} holds with
\begin{equation}\label{eq:accdelta}
  \delta := \frac{1-\|K\|^2}{\|I-K\|^2}.
\end{equation}
In particular, if $\|K\|<1$ then $I-K$ has a bounded inverse and $\delta>0$ in \eqref{eq:accdelta}. In this case $S$ is also bounded: $\|S\| \leq (1+\|K\|)/(1-\|K\|)$.
\end{lemma}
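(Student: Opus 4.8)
The plan is to parametrise the domain $\dom S=\range{I-K}$ and reduce everything to elementary identities. Since $I-K$ is injective, every $f\in\dom S$ can be written uniquely as $f=(I-K)g$ with $g\in X_2$, and then \eqref{eq:SfromK} gives $Sf=(I+K)g$. Consequently the two combinations entering \eqref{eq:Kdef} simplify to $(S+I)f=2g$ and $(S-I)f=2Kg$. From the first of these I read off that $S+I$ maps $\dom S$ bijectively onto $X_2$ (as $g$ runs through $X_2$), with everywhere-defined bounded inverse $(S+I)^{-1}=\tfrac12(I-K)$; hence $S+I$ is closed, being the inverse of a bounded everywhere-defined operator, and so is $S=(S+I)-I$. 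Substituting $(S+I)^{-1}y=\tfrac12(I-K)y$ into the second identity yields $(S-I)(S+I)^{-1}y=Ky$ for all $y\in X_2$, which is precisely the recovery formula \eqref{eq:Kdef}.

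For accretivity I would expand $\re\Ipdp{Sf}{f}=\re\Ipdp{(I+K)g}{(I-K)g}$; the cross terms $-\Ipdp{g}{Kg}+\Ipdp{Kg}{g}$ are purely imaginary and vanish under $\re$, leaving
\begin{equation*}
  \re\Ipdp{Sf}{f}=\|g\|^2-\|Kg\|^2\geq(1-\|K\|^2)\,\|g\|^2\geq0 .
\end{equation*}
Because $f=(I-K)g$ gives $\|f\|\leq\|I-K\|\,\|g\|$, this lower bound becomes $\re\Ipdp{Sf}{f}\geq\frac{1-\|K\|^2}{\|I-K\|^2}\|f\|^2$, which is \eqref{eq:accretivity} with $\delta$ as in \eqref{eq:accdelta}.

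It remains to show that $S$ is densely defined and maximal accretive. Maximal accretivity follows directly: if $S'\supset S$ is accretive then $I+S'$ is injective, while $\range{I+S}=X_2$ already, and comparing $(I+S')f'$ with its preimage under $I+S$ forces $\dom{S'}=\dom S$; equivalently, $-S$ is dissipative with $\range{I-(-S)}=X_2$, so the Lumer--Phillips Theorem \ref{thm:lumphi} applies to $-S$. The step I expect to require the most care is the density of $\dom S=\range{I-K}$, since when $\|K\|=1$ this is not automatic. Here I would use the fixed-point property of Hilbert-space contractions, namely $Kh=h\Longleftrightarrow K^*h=h$ (proved by expanding $\|h-K^*h\|^2$): it gives $\Ker{I-K^*}=\Ker{I-K}=\zero$ by the injectivity hypothesis, so that $\overline{\range{I-K}}=\Ker{I-K^*}^\perp=X_2$.

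Finally, if $\|K\|<1$ the Neumann series furnishes a bounded everywhere-defined $(I-K)^{-1}$ with $\|(I-K)^{-1}\|\leq(1-\|K\|)^{-1}$, whence $\dom S=\range{I-K}=X_2$ and $\delta>0$. In this case $S=(I+K)(I-K)^{-1}$ is a product of bounded operators, and $\|S\|\leq\|I+K\|\,\|(I-K)^{-1}\|\leq(1+\|K\|)/(1-\|K\|)$, completing the proof.
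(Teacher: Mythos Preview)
Your proof is correct and follows the same overall strategy as the paper: both establish $(S+I)^{-1}=\tfrac12(I-K)$, derive the accretivity estimate from the identity $\re\Ipdp{Sf}{f}=\|g\|^2-\|Kg\|^2$ (the paper routes this through \eqref{eq:Tcontr}, you compute it directly), invoke Lumer--Phillips for maximal accretivity, and use the Neumann series when $\|K\|<1$. The one genuine difference is how density and closedness of $S$ are obtained. The paper invokes Theorem~\ref{thm:lumphi} once, extracting maximal accretivity, density, and closedness simultaneously from the single fact that $-S$ is dissipative with $\range{I+S}=X_2$. You instead argue each property on its own: closedness because $(S+I)^{-1}$ is bounded and everywhere defined, and density via the Hilbert-space contraction fixed-point identity $\Ker{I-K}=\Ker{I-K^*}$, which gives $\overline{\range{I-K}}=\Ker{I-K^*}^\perp=X_2$. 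Your density argument is a nice self-contained alternative that avoids any appeal to semigroup theory; the paper's route is shorter once Lumer--Phillips is already on the table.
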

\begin{proof}
Assume that $K$ is an arbitrary contraction such that $I-K$ is injective. It follows from \eqref{eq:SfromK} that $S+I=2(I-K)^{-1}$, and $S-I=2K(I-K)^{-1}$. Hence $\range{S+I}=\dom{I-K}=X_2$ and \eqref{eq:Kdef} holds. From \eqref{eq:Kdef} it follows that \eqref{eq:Tcontr} holds, and from \eqref{eq:Tcontr} it in turn follows that for all $f\in\dom S$:
$$
\begin{aligned}
  \re\Ipdp{Sf}{f} &= \frac{\|(S+I)f\|^2 -\|K(S+I)f\|^2}{4} \\
  &\geq \frac{\|(S+I)f\|^2-\|K\|^2 \|(S+I)f\|^2}{4} \\
  &\geq \frac{1-\|K\|^2}{4}\, \|(S+I)f\|^2 \geq \frac{1-\|K\|^2}{4}\, \|2(I-K)^{-1}f\|^2 \\
  &\geq \frac{1-\|K\|^2}{\|I-K\|^2}\, \|I-K\|^2\,\|(I-K)^{-1}f\|^2 \geq \frac{1-\|K\|^2}{\|I-K\|^2}\, \|f\|^2 \geq 0.
\end{aligned}
$$
Thus \eqref{eq:accretivity} holds with $\delta$ in \eqref{eq:accdelta}, and we have showed that $S$ is accretive with the property $\range{S+I}=X_2$. By the Lumer-Phillips Theorem \ref{thm:lumphi}, $S$ is maximal accretive, densely defined, and closed. 

Finally assume that $\|K\|<1$. Then $I-K$ is boundedly invertible, or more precisely, $\|(I-K)^{-1}\|\leq 1/(1-\|K\|)$, as can easily be seen using Neumann series. Thus
$$
  \|S\|=\|(I+K)(I-K)^{-1}\|\leq \|I+K\|\|(I-K)^{-1}\|\leq \frac{1+\|K\|}{1-\|K\|}.
$$
\end{proof}

The following simple observation turns out to be useful:

\begin{corollary}\label{cor:cayl}
Let the operators $S$ and $K$ be related by \eqref{eq:Kdef}--\eqref{eq:SfromK}. Then $u=Ky$ if and only if $y-u\in\dom S$ and $y+u=S(y-u)$.
\end{corollary}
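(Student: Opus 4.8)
The plan is to establish the two implications of the equivalence by direct substitution, using only the correspondence formulas between $S$ and $K$ that were already recorded in the proofs of Lemmas \ref{lem:Scayl1} and \ref{lem:Scayl2}. The facts I would invoke are that $S+I$ has an everywhere-defined bounded inverse, that $I-K$ is injective with $\dom S=\range{I-K}$, and the algebraic identities $I-K=2(S+I)^{-1}$ and $I+K=2S(S+I)^{-1}$, equivalently $S+I=2(I-K)^{-1}$ and $S-I=2K(I-K)^{-1}$. No new idea beyond these is needed; the corollary is essentially a bookkeeping consequence of the definitions \eqref{eq:Kdef}--\eqref{eq:SfromK}.

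For the forward direction I would assume $u=Ky$ and first note that $y-u=(I-K)y$, which therefore lies in $\range{I-K}=\dom S$, securing the membership condition. I would then apply $S=(I+K)(I-K)^{-1}$ from \eqref{eq:SfromK} to $y-u=(I-K)y$; the composite $(I-K)^{-1}(I-K)$ cancels to the identity by injectivity of $I-K$, leaving $S(y-u)=(I+K)y=y+Ky=y+u$, which is exactly the claimed relation.

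For the reverse direction I would set $v:=y-u\in\dom S$ and use the hypothesis $y+u=Sv$. Adding and subtracting the trivial identity $y-u=v$ gives $2y=(S+I)v$ and $2u=(S-I)v$. Substituting $y=\tfrac12(S+I)v$ into $Ky=(S-I)(S+I)^{-1}y$ and cancelling $(S+I)^{-1}(S+I)=I$ on $\dom S$ then produces $Ky=\tfrac12(S-I)v=u$, as required.

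I do not expect any genuine obstacle: both implications collapse to one-line computations once the $S$--$K$ dictionary is in hand. The only points deserving an explicit word of justification are the two cancellations $(I-K)^{-1}(I-K)y=y$ and $(S+I)^{-1}(S+I)v=v$ for $v\in\dom S$, which are legitimate precisely because $I-K$ is injective and $S+I$ is boundedly invertible, both of which are furnished by Lemma \ref{lem:Scayl1}.
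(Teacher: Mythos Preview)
Your proposal is correct and follows essentially the same approach as the paper's proof: both directions are handled by the identities $I-K=2(S+I)^{-1}$, $I+K=2S(S+I)^{-1}$ and their inverses, with the reverse implication reduced to $2y=(S+I)(y-u)$, $2u=(S-I)(y-u)$ exactly as in the paper. The only cosmetic difference is that the paper treats the reverse implication first and is terser about the cancellations you spell out.
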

\begin{proof}
Assume that $y-u\in\dom S$ and $y+u=S(y-u)$. Then $(S+I)(y-u)=2y$ and $(S-I)(y-u)=2u$, which implies that $2u=(S-I)(S+I)^{-1}2y=2Ky$. Conversely, if $u=Ky$, then it follows from \eqref{eq:SfromK} that $y-u=(I-K)y\in\dom S$ and $y+u=S(y-u)$.
\end{proof}

The main findings of this section are now collected in the following proposition:

\begin{proposition}\label{prop:Sec2Concl}
Let $A_{ext}$ be a closed and maximal dissipative operator on the pair $\sbm{X_1\\X_2}$ of Hilbert spaces, and let $S$ be a closed and maximal accretive operator on $X_2$. Define $\SmallSysNode$ by \eqref{eq:sysnodecayl} and $K$ by \eqref{eq:Kdef}. Then the following claims are true:
\begin{enumerate}
\item The operator $\SmallSysNode$ is a passive system node with state space $X_1$ and input/output space $X_2$, and $K$ is a contraction on $X_2$. The operator $K$ is a strict contraction if and only if $S$ is bounded and uniformly accretive.

\item The operator $A_S$ defined in \eqref{eq:ASDef} has the alternative representation
\begin{equation}\label{eq:ASaltRep}
\begin{aligned}
  \dom{A_S} &= \bigg\{x\in X_1\bigmid \exists u\in X_2:~\bbm{x\\u}\in\dom{\SmallSysNode},\\
  &\qquad\qquad\qquad\quad y=K[\CD]\bbm{x\\u},~\text{and}~ u=Ky\bigg\},\\
  A_S x &= z,\quad \text{where}~\bbm{z\\y}=\bbm{\AB\\\CD}\bbm{x\\u}~\text{and}~u=Ky.
\end{aligned}
\end{equation}
\end{enumerate}
\end{proposition}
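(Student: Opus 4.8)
The plan is to assemble the proposition from results already established in this section: essentially all the analytic substance is contained in Theorem \ref{thm:stwe}, Lemmas \ref{lem:Scayl1}--\ref{lem:Scayl2}, and Corollary \ref{cor:cayl}, and what remains is mostly careful bookkeeping of the existential quantifiers. I would therefore present the proof as two short arguments, one per assertion, that cite the appropriate earlier statements.

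For assertion 1, I would reason as follows. Since $A_{ext}$ is closed and maximal dissipative by hypothesis, Theorem \ref{thm:stwe} applies verbatim and shows that the external Cayley system transform $\SmallSysNode$ defined in \eqref{eq:sysnodecayl} is a passive (in particular well-posed) system node with state space $X_1$ and input/output space $X_2$. Since $S$ is closed and maximal accretive, assertion 1 of Lemma \ref{lem:Scayl1} gives that the operator $K$ in \eqref{eq:Kdef} is an everywhere-defined contraction, and moreover that $S$ and $K$ are mutual Cayley transforms via \eqref{eq:Kdef}--\eqref{eq:SfromK} with $I-K$ injective. For the stated equivalence, the implication ``$S$ bounded and uniformly accretive $\Rightarrow$ $K$ a strict contraction'' is exactly assertion 3 of Lemma \ref{lem:Scayl1}. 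Conversely, because $K$ is already known to be a contraction with $I-K$ injective, I may feed it into Lemma \ref{lem:Scayl2}; its last paragraph then yields, under $\|K\|<1$, that $S$ is bounded with $\|S\|\le(1+\|K\|)/(1-\|K\|)$ and that \eqref{eq:accretivity} holds with the strictly positive $\delta$ of \eqref{eq:accdelta}, i.e.\ $S$ is bounded and uniformly accretive. This settles item 1.

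For assertion 2 I would start from the chain of equivalences in \eqref{eq:sysnodeAS}, which was derived just before the proposition and already rewrites ``$x\in\dom{A_S}$ with $z=A_Sx$'' purely in terms of $\SmallSysNode$: there exist $u,y\in X_2$ with $\sbm{x\\u}\in\dom{\SmallSysNode}$, $\sbm{z\\y}=\SmallSysNode\sbm{x\\u}$, $y-u\in\dom S$, and $\frac{y+u}{\sqrt2}=S\frac{y-u}{\sqrt2}$. Using linearity of $S$, the last condition is equivalent to $y+u=S(y-u)$ together with $y-u\in\dom S$, and Corollary \ref{cor:cayl} converts this into the single feedback relation $u=Ky$. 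Since the output satisfies $y=\CD\sbm{x\\u}$, the quantifier over $y$ is determined by $\sbm{x\\u}$ and can be eliminated, so the membership condition collapses to exactly the one displayed in \eqref{eq:ASaltRep}, and the value $z=A_Sx$ is read off from the first component of $\sbm{z\\y}=\SmallSysNode\sbm{x\\u}$ in accordance with \eqref{eq:sysnodecayl}.

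The only point requiring genuine care---and where I would be most careful---is the passage of the existential quantifiers under the change of variables $u=(e-f)/\sqrt2$, $y=(e+f)/\sqrt2$. Here I would invoke the elementary equivalence \eqref{eq:caylandinv} to observe that $(e,f)\mapsto(u,y)$ is a bijection, so that the quantifier ``$\exists f\in\dom S,\,e\in X_2$'' appearing in \eqref{eq:ASDef} transfers cleanly to ``$\exists u,y\in X_2$'' and, after applying Corollary \ref{cor:cayl} and discarding the now-redundant $y$, to the existential over $u$ alone in \eqref{eq:ASaltRep}. No new estimate is needed at this stage; the real work was already expended in proving Theorem \ref{thm:stwe} and the two Cayley-transform lemmas.
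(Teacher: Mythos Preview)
Your proof is correct and follows essentially the same route as the paper: item 1 is assembled from Theorem \ref{thm:stwe} together with Lemma \ref{lem:Scayl1} (parts 1 and 3) and Lemma \ref{lem:Scayl2}, and item 2 is obtained by combining the equivalence \eqref{eq:sysnodeAS} with Corollary \ref{cor:cayl}. You have simply written out in more detail what the paper compresses into two sentences.
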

\begin{proof}
Item 1 follows from Theorem \ref{thm:stwe} together with assertions 1 and 3 of Lemma \ref{lem:Scayl1} and Lemma \ref{lem:Scayl2}. The second item holds because the last line of \eqref{eq:sysnodeAS} and \eqref{eq:ASaltRep} are equivalent by Corollary \ref{cor:cayl}.
\end{proof}

In the next section we give some sufficient conditions for $A_S$ to be maximal dissipative by considering $K$ as a static output feedback operator for $\SmallSysNode$; see \eqref{eq:ASaltRep}.

\section{Proof of Theorem \ref{thm:KadmConseq} using feedback theory}\label{sec:scattfeedback}

We first recall some background on feedback in infinite-dimensional systems. We start with a system node $\SmallSysNode$ and a bounded static output feedback operator $K$. We then create a feedback loop from the output $y$ of $\SmallSysNode$ to the input of $K$, and the output of $K$ is fed back into the input $u$ of $\SmallSysNode$. To the input $u$ of $\SmallSysNode$ we also add another external input $v$, and if the resulting mapping $\sbm{A^f\&B^f\\C^f\&D^f}$ from $\sbm{x\\v}$ to $\sbm{z\\y}=\SmallSysNode\sbm{x\\u}$ is again a system node, then we say that $K$ is an admissible static feedback operator for $\SmallSysNode$. The superscript $f$ stands for ``feedback''; see Figure \ref{fig:feedback} for an illustration of $\sbm{A^f\&B^f\\C^f\&D^f}$. Definition \ref{def:feedb} gives the precise definition of the concept which is referred to as \emph{system-node admissibility} in \cite[Def.\ 7.4.2]{StafBook}.

\begin{figure}[!h]
\begin{center}

\ifx\du\undefined
  \newlength{\du}
\fi
\setlength{\du}{5\unitlength}
\begin{tikzpicture}
\pgftransformxscale{1.000000}
\pgftransformyscale{-1.000000}
\definecolor{dialinecolor}{rgb}{0.000000, 0.000000, 0.000000}
\pgfsetstrokecolor{dialinecolor}
\definecolor{dialinecolor}{rgb}{1.000000, 1.000000, 1.000000}
\pgfsetfillcolor{dialinecolor}
\pgfsetlinewidth{0.100000\du}
\pgfsetdash{{1.000000\du}{1.000000\du}}{0\du}
\pgfsetdash{{1.000000\du}{1.000000\du}}{0\du}
\pgfsetroundjoin
{\pgfsetcornersarced{\pgfpoint{0.700000\du}{0.700000\du}}\definecolor{dialinecolor}{rgb}{0.000000, 0.000000, 0.000000}
\pgfsetstrokecolor{dialinecolor}
\draw (-5.000000\du,-4.000000\du)--(-5.000000\du,20.000000\du)--(27.000000\du,20.000000\du)--(27.000000\du,-4.000000\du)--cycle;
}\pgfsetlinewidth{0.100000\du}
\pgfsetdash{}{0pt}
\pgfsetdash{}{0pt}
\pgfsetroundjoin
{\pgfsetcornersarced{\pgfpoint{0.300000\du}{0.300000\du}}\definecolor{dialinecolor}{rgb}{0.000000, 0.000000, 0.000000}
\pgfsetstrokecolor{dialinecolor}
\draw (6.000000\du,4.000000\du)--(6.000000\du,10.000000\du)--(16.000000\du,10.000000\du)--(16.000000\du,4.000000\du)--cycle;
}
\definecolor{dialinecolor}{rgb}{0.000000, 0.000000, 0.000000}
\pgfsetstrokecolor{dialinecolor}
\node[anchor=west] at (15.500000\du,0.000000\du){$\sbm{A^f\&B^f\\C^f\&D^f}$};
\definecolor{dialinecolor}{rgb}{0.000000, 0.000000, 0.000000}
\pgfsetstrokecolor{dialinecolor}
\node[anchor=west] at (7.000000\du,7.000000\du){$\SmallSysNode$};
\pgfsetlinewidth{0.100000\du}
\pgfsetdash{}{0pt}
\pgfsetdash{}{0pt}
\pgfsetroundjoin
{\pgfsetcornersarced{\pgfpoint{0.300000\du}{0.300000\du}}\definecolor{dialinecolor}{rgb}{0.000000, 0.000000, 0.000000}
\pgfsetstrokecolor{dialinecolor}
\draw (8.000000\du,13.000000\du)--(8.000000\du,17.000000\du)--(14.000000\du,17.000000\du)--(14.000000\du,13.000000\du)--cycle;
}
\definecolor{dialinecolor}{rgb}{0.000000, 0.000000, 0.000000}
\pgfsetstrokecolor{dialinecolor}
\node[anchor=west] at (9.250000\du,15.000000\du){$K$};
\pgfsetlinewidth{0.100000\du}
\pgfsetdash{}{0pt}
\pgfsetdash{}{0pt}
\pgfsetmiterjoin
\pgfsetbuttcap
{
\definecolor{dialinecolor}{rgb}{0.000000, 0.000000, 0.000000}
\pgfsetfillcolor{dialinecolor}
\pgfsetarrowsend{stealth}
{\pgfsetcornersarced{\pgfpoint{0.000000\du}{0.000000\du}}\definecolor{dialinecolor}{rgb}{0.000000, 0.000000, 0.000000}
\pgfsetstrokecolor{dialinecolor}
\draw (16.000000\du,9.000000\du)--(22.000000\du,9.000000\du)--(22.000000\du,15.000000\du)--(14.000000\du,15.000000\du);
}}
\pgfsetlinewidth{0.100000\du}
\pgfsetdash{}{0pt}
\pgfsetdash{}{0pt}
\pgfsetmiterjoin
\pgfsetbuttcap
{
\definecolor{dialinecolor}{rgb}{0.000000, 0.000000, 0.000000}
\pgfsetfillcolor{dialinecolor}
\pgfsetarrowsend{stealth}
{\pgfsetcornersarced{\pgfpoint{0.000000\du}{0.000000\du}}\definecolor{dialinecolor}{rgb}{0.000000, 0.000000, 0.000000}
\pgfsetstrokecolor{dialinecolor}
\draw (8.000000\du,15.000000\du)--(0.000000\du,15.000000\du)--(0.000000\du,9.500000\du);
}}
\definecolor{dialinecolor}{rgb}{0.000000, 0.000000, 0.000000}
\pgfsetstrokecolor{dialinecolor}
\node[anchor=west] at (8.000000\du,7.000000\du){};
\pgfsetlinewidth{0.100000\du}
\pgfsetdash{}{0pt}
\pgfsetdash{}{0pt}
\pgfsetbuttcap
{
\definecolor{dialinecolor}{rgb}{0.000000, 0.000000, 0.000000}
\pgfsetfillcolor{dialinecolor}
\pgfsetarrowsend{stealth}
\definecolor{dialinecolor}{rgb}{0.000000, 0.000000, 0.000000}
\pgfsetstrokecolor{dialinecolor}
\draw (-10.000000\du,5.000000\du)--(6.000000\du,5.000000\du);
}
\pgfsetlinewidth{0.100000\du}
\pgfsetdash{}{0pt}
\pgfsetdash{}{0pt}
\pgfsetbuttcap
{
\definecolor{dialinecolor}{rgb}{0.000000, 0.000000, 0.000000}
\pgfsetfillcolor{dialinecolor}
\pgfsetarrowsend{stealth}
\definecolor{dialinecolor}{rgb}{0.000000, 0.000000, 0.000000}
\pgfsetstrokecolor{dialinecolor}
\draw (16.000000\du,5.000000\du)--(34.000000\du,5.000000\du);
}
\definecolor{dialinecolor}{rgb}{0.000000, 0.000000, 0.000000}
\pgfsetstrokecolor{dialinecolor}
\node[anchor=west] at (8.000000\du,7.000000\du){};
\definecolor{dialinecolor}{rgb}{0.000000, 0.000000, 0.000000}
\pgfsetstrokecolor{dialinecolor}
\node[anchor=west] at (9.000000\du,7.000000\du){};
\definecolor{dialinecolor}{rgb}{0.000000, 0.000000, 0.000000}
\pgfsetstrokecolor{dialinecolor}
\node[anchor=west] at (28.000000\du,3.000000\du){$z$};
\definecolor{dialinecolor}{rgb}{0.000000, 0.000000, 0.000000}
\pgfsetstrokecolor{dialinecolor}
\node[anchor=west] at (17.000000\du,17.000000\du){$y$};
\definecolor{dialinecolor}{rgb}{0.000000, 0.000000, 0.000000}
\pgfsetstrokecolor{dialinecolor}
\node[anchor=west] at (-10.000000\du,3.000000\du){$x$};
\definecolor{dialinecolor}{rgb}{0.000000, 0.000000, 0.000000}
\pgfsetstrokecolor{dialinecolor}
\node[anchor=west] at (2.000000\du,17.000000\du){$Ky$};
\definecolor{dialinecolor}{rgb}{0.000000, 0.000000, 0.000000}
\pgfsetstrokecolor{dialinecolor}
\node[anchor=west] at (27.000000\du,4.000000\du){};
\definecolor{dialinecolor}{rgb}{0.000000, 0.000000, 0.000000}
\pgfsetstrokecolor{dialinecolor}
\node[anchor=west] at (18.150000\du,16.850000\du){};
\definecolor{dialinecolor}{rgb}{0.000000, 0.000000, 0.000000}
\pgfsetstrokecolor{dialinecolor}
\node[anchor=west] at (-9.187500\du,3.925000\du){};
\pgfsetlinewidth{0.100000\du}
\pgfsetdash{}{0pt}
\pgfsetdash{}{0pt}
\pgfsetbuttcap
{
\definecolor{dialinecolor}{rgb}{0.000000, 0.000000, 0.000000}
\pgfsetfillcolor{dialinecolor}
\pgfsetarrowsend{stealth}
\definecolor{dialinecolor}{rgb}{0.000000, 0.000000, 0.000000}
\pgfsetstrokecolor{dialinecolor}
\draw (22.000000\du,9.000000\du)--(34.000000\du,9.000000\du);
}
\definecolor{dialinecolor}{rgb}{0.000000, 0.000000, 0.000000}
\pgfsetstrokecolor{dialinecolor}
\node[anchor=west] at (28.000000\du,11.000000\du){$y$};
\pgfsetlinewidth{0.100000\du}
\pgfsetdash{}{0pt}
\pgfsetdash{}{0pt}
\pgfsetbuttcap
{
\definecolor{dialinecolor}{rgb}{0.000000, 0.000000, 0.000000}
\pgfsetfillcolor{dialinecolor}
\pgfsetarrowsend{stealth}
\definecolor{dialinecolor}{rgb}{0.000000, 0.000000, 0.000000}
\pgfsetstrokecolor{dialinecolor}
\draw (-10.000000\du,9.000000\du)--(-0.500000\du,9.000000\du);
}
\definecolor{dialinecolor}{rgb}{0.000000, 0.000000, 0.000000}
\pgfsetstrokecolor{dialinecolor}
\node[anchor=west] at (-10.000000\du,11.000000\du){$v$};
\pgfsetlinewidth{0.100000\du}
\pgfsetdash{}{0pt}
\pgfsetdash{}{0pt}
\pgfsetbuttcap
{
\definecolor{dialinecolor}{rgb}{0.000000, 0.000000, 0.000000}
\pgfsetfillcolor{dialinecolor}
\pgfsetarrowsend{stealth}
\definecolor{dialinecolor}{rgb}{0.000000, 0.000000, 0.000000}
\pgfsetstrokecolor{dialinecolor}
\draw (0.500000\du,9.000000\du)--(6.000000\du,9.000000\du);
}
\definecolor{dialinecolor}{rgb}{0.000000, 0.000000, 0.000000}
\pgfsetstrokecolor{dialinecolor}
\node[anchor=west] at (3.000000\du,11.000000\du){$u$};
\definecolor{dialinecolor}{rgb}{1.000000, 1.000000, 1.000000}
\pgfsetfillcolor{dialinecolor}
\pgfpathellipse{\pgfpoint{0.000000\du}{9.000000\du}}{\pgfpoint{0.500000\du}{0\du}}{\pgfpoint{0\du}{0.500000\du}}
\pgfusepath{fill}
\pgfsetlinewidth{0.100000\du}
\pgfsetdash{}{0pt}
\pgfsetdash{}{0pt}
\definecolor{dialinecolor}{rgb}{0.000000, 0.000000, 0.000000}
\pgfsetstrokecolor{dialinecolor}
\pgfpathellipse{\pgfpoint{0.000000\du}{9.000000\du}}{\pgfpoint{0.500000\du}{0\du}}{\pgfpoint{0\du}{0.500000\du}}
\pgfusepath{stroke}
\definecolor{dialinecolor}{rgb}{0.000000, 0.000000, 0.000000}
\pgfsetstrokecolor{dialinecolor}
\node[anchor=west] at (-4.000000\du,8.000000\du){$+$};
\definecolor{dialinecolor}{rgb}{0.000000, 0.000000, 0.000000}
\pgfsetstrokecolor{dialinecolor}
\node[anchor=west] at (0.000000\du,11.000000\du){$+$};
\end{tikzpicture}

\caption{A standard feedback connection illustrating the closed-loop system node in Definition \ref{def:feedb}.}
\label{fig:feedback}
\end{center}
\end{figure}
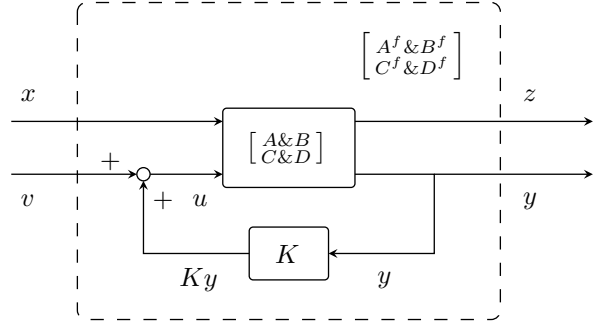

\begin{definition}\label{def:feedb}
Let $\SmallSysNode$ be a system node with input space $U$ and output space $Y$. The bounded linear operator $K$ from $Y$ into $U$ is an \emph{admissible static output feedback operator} for $\SmallSysNode$ if there exists another system node $\sbm{A^f\&B^f\\C^f\&D^f}$ with the same input, state, and output spaces as $\SmallSysNode$, such that the following conditions all hold:
\begin{enumerate}
\item The operator 
\begin{equation}\label{eq:Mdef}
	M:= \bbm{I&0\\0&I}-\bbm{\bbm{0&0}\\K[\CD]} 
\end{equation}
maps $\dom{\SmallSysNode}$ continuously into $\dom{\sbm{A^f\&B^f\\C^f\&D^f}}$.
\item $M$ is invertible and the inverse satisfies 
$$M^{-1} = \bbm{I&0\\0&I}+\bbm{\bbm{0&0}\\K[C^f\&D^f]}.$$
\item The two system nodes are related by 
\begin{equation}\label{eq:closedloop}
	\bbm{A^f\&B^f\\C^f\&D^f}=\SysNode M^{-1}.
\end{equation}
\end{enumerate}
\end{definition}

We refer to $\sbm{A^f\&B^f\\C^f\&D^f}$ in the above result as the \emph{closed-loop} system node corresponding to the coupling of $\SmallSysNode$ and $K$. Note that the operator $M^{-1}$ in Definition \ref{def:feedb} corresponds to the mapping from $\sbm{x\\v}$ to $\sbm{x\\u}$ in Figure \ref{fig:feedback}. 
The $T$-input/output map of Definition \ref{def:DzeroT} plays a key role in determining if a given operator $K$ is an admissible static input/output feedback operator:

\begin{lemma}\label{lem:adm}
Fix $T>0$ arbitrarily and let $\SmallSysNode$ be a \emph{passive} system node with input space $U$, output space $Y$, and $T$-input/output map $\Dfrak_0^T$. Let $K$ be a bounded operator from $Y$ into $U$. Then the following claims are true:
\begin{enumerate}
\item The operator $K$ is an admissible static output feedback operator for $\SmallSysNode$ if $I-K\Dfrak_0^T$ has a bounded inverse in $L^2([0,T];U)$, where $K$ is applied point-wise to a function in $L^2([0,T];Y)$.
\item If $\|K\Dfrak_0^T\|<1$ as an operator on $L^2([0,T];U)$, then $K$ is admissible.
\end{enumerate}
\end{lemma}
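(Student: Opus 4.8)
The plan is to deduce assertion 2 from assertion 1 and to prove assertion 1 by constructing the closed-loop system node of Definition \ref{def:feedb} explicitly, using the $T$-input/output map to turn the static feedback equation into an equation on $L^2([0,T];U)$. The reduction is immediate: if $\|K\Dfrak_0^T\|<1$ as an operator on $L^2([0,T];U)$, then the Neumann series $\sum_{n\ge0}(K\Dfrak_0^T)^n$ converges in operator norm to a bounded inverse of $I-K\Dfrak_0^T$, so the hypothesis of assertion 1 is met and assertion 2 follows.

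For assertion 1 I would first isolate the role of $\Dfrak_0^T$. The operator $M^{-1}$ of Definition \ref{def:feedb} is the map $\sbm{x\\v}\mapsto\sbm{x\\u}$ determined by $u=v+Ky$ with $y=\CD\sbm{x\\u}$; see Figure \ref{fig:feedback}. For a classical trajectory $(u,x,y)$ of $\SmallSysNode$ with $x(0)=0$ the $T$-input/output map gives $\pi_{[0,T]}y=\Dfrak_0^T\pi_{[0,T]}u$, so the feedback relation $u=v+Ky$ becomes $(I-K\Dfrak_0^T)\pi_{[0,T]}u=\pi_{[0,T]}v$ on $[0,T]$. By hypothesis this is uniquely solvable, $\pi_{[0,T]}u=(I-K\Dfrak_0^T)^{-1}\pi_{[0,T]}v$, which pins down the closed-loop input and output on $[0,T]$ in terms of the external input $v$.

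Next I would extend this from $[0,T]$ to all of $\rplus$ using causality and time-invariance of the full input/output map $\Dfrak$, recalling $\Dfrak_0^\tau=\pi_{[0,\tau]}\Dfrak\pi_{[0,\tau]}$. Causality makes $\Dfrak_0^s$ for $s\le T$ the compression of $\Dfrak_0^T$, so $I-K\Dfrak_0^s$ again has a bounded inverse; time-invariance then lets me piece together successive intervals of length $T$, carrying the terminal state of each interval forward as the initial state of the next, so that the feedback equation is solvable on every $[0,nT]$ and hence on $\rplus$. This yields a well-posed closed-loop system whose trajectories are exactly the solutions of $u=v+Ky$, and from it one reads off the closed-loop system node $\sbm{A^f\&B^f\\C^f\&D^f}$, after which the three conditions of Definition \ref{def:feedb}---continuity of $M$ from $\dom{\SmallSysNode}$ into the new domain, invertibility of $M$ with the stated inverse, and $\sbm{A^f\&B^f\\C^f\&D^f}=\SysNode M^{-1}$---can be verified directly. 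Alternatively, once $I-K\Dfrak_0^T$ is known to be invertible, this verification can be delegated to the general theory of system-node admissibility for well-posed systems in \cite[Ch.\ 7]{StafBook}.

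I expect the main obstacle to be the passage from invertibility at a single $T$ to a genuine closed-loop system node on $\rplus$: the interval-by-interval bootstrapping must be carried out so that the pieced-together state trajectory is continuous and the closed-loop main operator generates a $C_0$-semigroup, and one must confirm that the algebraic identities of Definition \ref{def:feedb}, especially the formula for $M^{-1}$ and $\sbm{A^f\&B^f\\C^f\&D^f}=\SysNode M^{-1}$, hold on the correct domains. This is exactly the point where the well-posed systems machinery carries the weight of the argument.
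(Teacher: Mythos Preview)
Your plan is sound, and in fact you anticipate both routes. The paper takes the second one you mention: it delegates the entire argument to Staffans' book. The only work done in the paper is the one-line observation that because $K$ acts pointwise, it commutes with $\pi_{[0,T]}$, so $\pi_{[0,T]}K\Dfrak\pi_{[0,T]}=K\Dfrak_0^T$; this identifies the hypothesis of the lemma with the hypothesis of \cite[Thm~7.1.8(ii)]{StafBook}, which gives well-posed admissibility, and then \cite[Thm~7.4.1]{StafBook} upgrades this to system-node admissibility in the sense of Definition~\ref{def:feedb}. Assertion~2 is quoted as \cite[Cor.~7.1.9(i)]{StafBook}, which is indeed just the Neumann-series argument you give.

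Your first route, the explicit interval-by-interval construction of the closed-loop well-posed system followed by extraction of its node, is precisely the content of those cited theorems, so you are not doing anything different in substance, only choosing to unfold the references. One small caution: the step ``$\Dfrak_0^s$ for $s\le T$ is the compression of $\Dfrak_0^T$, so $I-K\Dfrak_0^s$ again has a bounded inverse'' is not automatic from compression alone; you need that the inverse $(I-K\Dfrak_0^T)^{-1}$ is itself causal, which is true here but requires the argument in \cite[Thm~7.1.8]{StafBook}. For the bootstrap to longer intervals you do not actually need the $s\le T$ case, so this is a detour you could omit.
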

\begin{proof}
Since $K$ is applied point-wise, we have that
$$
  \pi_{[0,T]}K\Dfrak\pi_{[0,T]}=K\pi_{[0,T]}\Dfrak\pi_{[0,T]}=K\Dfrak_0^T.
$$
By Remark \ref{rem:StaffansD} combined with \cite[Thm 7.1.8(ii)]{StafBook}, $K$ is admissible even in the well-posed sense described in \cite[Def.\ 7.1.1]{StafBook} if $I-K\Dfrak_0^T$ has a bounded inverse in $L^2([0,T];U)$. By \cite[Thm 7.4.1]{StafBook}, $K$ is then admissible also in the sense of Definition \ref{def:feedb}, and this proves item one. Item two is \cite[Cor.\ 7.1.9(i)]{StafBook}.
\end{proof}

The preceding proof together with Lemma \ref{lem:Scayl1}.3 proves the last claim in Remark \ref{rem:insights}.

We now focus on the sufficient condition 2 in Lemma \ref{lem:adm}. First recall that $\|\Dfrak_0^T\|\leq1$ for a passive system node by the construction of $\Dfrak_0^T$ and that $\|K\|\leq 1$ if $S$ is maximal accretive and closed. Hence, if $A_{ext}$ is maximal dissipative and $S$ is maximal accretive, both being closed, then $\|K\Dfrak_0^T\|\leq\min\set{\|K\|,\|\Dfrak_0^T\|}$, which is strictly less than one if $\|K\|<1$ or $\|\Dfrak_0^T\|<1$. We can now prove the main result of the paper, Theorem \ref{thm:KadmConseq}.

\emph{Proof of Theorem \ref{thm:KadmConseq}.} We assume that $A_{ext}$ is maximal dissipative and closed on $\sbm{X_1\\X_2}$, that $S$ is maximal accretive and closed on $X_2$, and that $K$ is an admissible static feedback operator for $\SmallSysNode$ defined in \eqref{eq:sysnodecayl}. By Theorem \ref{thm:stwe}, $\SmallSysNode$ is a scattering passive system node, and the operator $\sbm{A^f\&B^f\\C^f\&D^f}$ in Definition \ref{def:feedb} is also a system node due to the assumption on $K$. We next compute the main operator $A^f$ of the latter, showing that $A^f=A_S$. 

By \eqref{eq:Adef} and Definition \ref{def:feedb}, $x\in\dom{A^f}$ and $A^fx=z$ if and only if 
$$
\begin{aligned}
  \bbm{x\\0}\in\dom{\bbm{A^f\&B^f\\C^f\&D^f}} &= \left(\bbm{I&0\\0&I}-\bbm{0\\K[\CD]}\right)\dom{\SmallSysNode}\\
  \text{and}\quad z &= \AB \left(\bbm{I&0\\0&I}-\bbm{0\\K[\CD]}\right)^{-1}\bbm{x\\0},
\end{aligned}
$$
which holds if and only if there exist $\sbm{\wtsmash x\\u}\in\dom{\SmallSysNode}$, such that 
\begin{equation}\label{eq:feedbcalc}
  \bbm{x\\0}=\left(\bbm{I&0\\0&I}-\bbm{0\\K[\CD]}\right)\bbm{\wtsmash x\\u}
  \quad\text{and}\quad z=\AB\bbm{\tilde x\\u}.
\end{equation}
The equations \eqref{eq:feedbcalc} clearly hold if and only if 
$$
  \wtsmash x=x \quad\text{and}\quad \bbm{z\\u}=\bbm{\AB\\K[\CD]}\bbm{x\\u},
$$
and summarizing, we find that $x\in\dom{A^f}$ and $A^fx=z$ if and only if
\begin{equation}\label{eq:sysnodeAK}
  \exists u\in X_2:\quad \bbm{x\\u}\in\dom{\SmallSysNode}, ~u = K[\CD]\bbm{x\\u},~z= \AB\bbm{x\\u}.
\end{equation}
By \eqref{eq:ASaltRep}, \eqref{eq:sysnodeAK} is equivalent to $x\in\dom{A_S}$ and $z=A_Sx$. Hence $A^f=A_S$.

Now we prove that $A_S$ generates a contraction semigroup on $X_1$. According to Definitions \ref{def:sysnode} and \ref{def:feedb}, the operator $A^f=A_S$ generates a $C_0$-semigroup. By the Hille-Yosida Theorem \cite[Thm 2.1.12]{CuZwBook}, there exists some $\omega\in\cplus\cap\res {A^f}$, and since $A_S$ is dissipative by \eqref{eq:ASdiss}, we have that $A_S$ generates a contraction semigroup by the Lumer-Phillips theorem \ref{thm:lumphi}.

It now only remains to point out that $K$ is admissible if $S$ is bounded and uniformly accretive, and this follows from Proposition \ref{prop:Sec2Concl}.1, Lemma \ref{lem:adm}, and $\|K\Dfrak_0^T\|\leq \|K\|<1$. $\square$

The following simple example shows that admissibility of $K$ is not necessary for $A_S$ to generate a contraction semigroup:

\begin{example}\label{ex:mainconvfalse}
Take $X_1=X_2=\C$, $A_{ext}=\sbm{0&0\\0&i}$, and $S=i$. Then $\SmallSysNode=\sbm{0&0\\0&-i}$ and $K=i$, so that $M=\sbm{1&0\\0&0}$ which is not injective. Hence $K$ is not admissible, but by \eqref{eq:ASintro} we have $A_S=0$ which nevertheless generates the constant semigroup on $\C$.
\end{example}

In the introduction we proved that the heat equation \eqref{eq:heatND} is associated to a contraction semigroup using the knowledge that the wave equation \eqref{eq:waveintro} is associated to a contraction semigroup. In the case where the thermal diffusivity $\alpha(\cdot)$ is constantly $I$, we obtain $S=I$ which gives $K=0$. In the notation of Definition \ref{def:feedb}, we thus have that $M^{-1}=\sbm{I&0\\0&I}$ and hence $\sbm{A^f\&B^f\\C^f\&D^f}=\SmallSysNode$. Comparing \eqref{eq:heatND} to \eqref{eq:wavetoheat}, we can confirm that in this example indeed $A_S=A^f=A=\Delta$. 

In the next section we study two more examples that fall under Theorem \ref{thm:KadmConseq}. Now we present a list of sufficient conditions on $\SmallSysNode$ for $\|\Dfrak_0^T\|<1$ to hold.

\begin{proposition}\label{prop:scattpass}
Assume that $A_{ext}$ is maximal dissipative and closed. Define $\SmallSysNode$ by \eqref{eq:sysnodecayl}. If at least one of the following conditions is satisfied for some $T>0$, then $\|\Dfrak_0^T\|<1$:
\begin{enumerate}
\item There exist $T>0$ and $N_T<1$, such that it for all classical trajectories with initial state $x(0)=0$, input signal $u(\cdot)$, and output signal $y(\cdot)$ holds that
\begin{equation}\label{eq:unifscatt}
  \int_0^T\|y(t)\|_ Y^2\ud t \leq N_T \int_0^T\|u(t)\|_ U^2\ud t.
\end{equation}
\item For some $T>0$, some $\varepsilon>0$, and all classical trajectories with input signal $u(\cdot)$ and state trajectory $x(\cdot)$ satisfying $x(0)=0$, it holds that
\begin{equation}\label{eq:unifstate}
  \|x(T)\|_X^2 \geq \varepsilon \int_0^T\|u(t)\|_ U^2\ud t.
\end{equation}
\item The system node $\SmallSysNode$ has a delay $\tau>0$ from input to output, i.e., all classical trajectories $(u,x,y)$ with initial state $x(0)=0$  satisfy $\pi_{[0,\tau)}y=0$.
\end{enumerate}
In fact, assumptions 2 and 3 both imply that assumption 1 is satisfied, with $N_T=1-\varepsilon$, and $T:=\tau$, $N_\tau=0$, respectively.
\end{proposition}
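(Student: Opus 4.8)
The plan is to prove the three implications in the order suggested by the final sentence of the statement: first I reduce the target estimate $\norm{\Dfrak_0^T}<1$ to assumption 1, and then I verify that assumptions 2 and 3 are special cases of assumption 1. The pivotal observation is that the operator norm of $\Dfrak_0^T$ can be read off directly from classical trajectories. Recall from the discussion preceding Definition \ref{def:DzeroT} that $\Dfrak_0^T$ is by construction the continuous extension to all of $L^2([0,T];U)$ of the densely defined map $\pi_{[0,T]}u\mapsto\pi_{[0,T]}y$ taken over classical trajectories with $x(0)=0$, whose restricted inputs form the dense subspace $\Uscr_0^T$ of \eqref{eq:classinputdense}. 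Since the norm of a densely defined bounded operator equals that of its continuous extension, and since $\norm{\pi_{[0,T]}y}_{L^2}^2=\int_0^T\norm{y(t)}_Y^2\ud t$ (and likewise for $u$), causality gives
$$
  \norm{\Dfrak_0^T}^2 = \sup\set{\frac{\int_0^T\norm{y(t)}_Y^2\ud t}{\int_0^T\norm{u(t)}_U^2\ud t}\biggmid (u,x,y)~\text{classical},~x(0)=0,~\pi_{[0,T]}u\neq0}.
$$
With this identity in hand, assumption 1 states precisely that every quotient appearing in the supremum is bounded by $N_T$, so $\norm{\Dfrak_0^T}^2\leq N_T<1$ and the conclusion is immediate.

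It then remains to deduce assumption 1 from each of assumptions 2 and 3. For assumption 2, I would invoke the passivity of $\SmallSysNode$, which holds by Theorem \ref{thm:stwe}: applying \eqref{eq:scattpass} with $M_T=1$ and $x(0)=0$ gives $\norm{x(T)}_X^2+\int_0^T\norm{y}_Y^2\ud t\leq\int_0^T\norm{u}_U^2\ud t$. Substituting the lower bound \eqref{eq:unifstate} for $\norm{x(T)}_X^2$ then yields $\int_0^T\norm{y}_Y^2\ud t\leq(1-\varepsilon)\int_0^T\norm{u}_U^2\ud t$, which is exactly \eqref{eq:unifscatt} with $N_T=1-\varepsilon$; here $\varepsilon\leq1$ automatically, again by passivity, so that $N_T\in[0,1)$. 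For assumption 3, the delay condition $\pi_{[0,\tau)}y=0$ forces $y(t)=0$ for a.e.\ $t\in[0,\tau)$, whence $\int_0^\tau\norm{y}_Y^2\ud t=0$ and \eqref{eq:unifscatt} holds with the choices $T:=\tau$ and $N_\tau=0$.

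I do not anticipate a serious obstacle, since each implication is short once the norm identity for $\Dfrak_0^T$ is in place. The one point that genuinely requires care is that identity itself: one must confirm that taking the supremum over classical trajectories with $x(0)=0$, rather than over all of $L^2([0,T];U)$, does compute the full operator norm. This rests on the density of $\Uscr_0^T$ in $L^2([0,T];U)$ together with causality, $\pi_{[0,T]}y=\Dfrak_0^T\pi_{[0,T]}u$, which guarantees that restricting a trajectory to $[0,T]$ leaves the input/output pairing on that interval unchanged and that distinct trajectories sharing the same $\pi_{[0,T]}u$ share the same $\pi_{[0,T]}y$.
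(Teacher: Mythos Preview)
Your proposal is correct and follows essentially the same approach as the paper's proof: first use the density of $\Uscr_0^T$ in $L^2([0,T];U)$ to conclude $\norm{\Dfrak_0^T}\leq\sqrt{N_T}<1$ from assumption~1, then derive assumption~1 from assumption~2 via the passivity inequality \eqref{eq:scattpass} with $M_T=1$ and $x(0)=0$, and from assumption~3 by noting that the delay forces $\int_0^\tau\norm{y}^2\ud t=0$. Your treatment is in fact more explicit than the paper's about the density/causality point underlying the norm identity for $\Dfrak_0^T$.
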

\begin{proof}
Combining \eqref{eq:unifscatt} with the denseness of $\Uscr_0^T$ in $L^2([0,T];U)$, see \eqref{eq:classinputdense}, we obtain that $\|\Dfrak_0^T\|\leq N_T<1$. If \eqref{eq:unifstate} holds, then \eqref{eq:unifscatt} holds with $N_T:=1-\varepsilon$, according to \eqref{eq:scattpass}. Finally, if assumption 3 holds, then $\int_0^\tau \|u(t)\|\ud t=0$ for all classical trajectories with $x(0)=0$, so \eqref{eq:unifscatt} holds with $T:=\tau$ and $N_T:=0$.
\end{proof}

By Proposition \ref{prop:DzeroTdecreases1}, it is enough to check the conditions in Proposition \ref{prop:scattpass} for small $T$. The condition \eqref{eq:unifstate} implies that the input-to-state map
$u\mapsto x(T)$, $x(0)=0$, is injective. This condition seems quite
rare; it does not hold for for any finite-dimensional system, since
the input-to-state map maps the dense subspace $\Uscr_0^T$ of
$L^2([0,T];U)$ into the finite-dimensional state space. The condition
\eqref{eq:unifstate} does, however, hold with $\varepsilon=1$ if $A$
generates the outgoing shift on the right half-line with input $u$ at the
boundary $\xi=0$.

\begin{proposition}
Let $A_{ext}$ and $\SmallSysNode$ be as in Proposition \ref{prop:scattpass}, and assume that condition 1 in that proposition holds. Then $A_{ext}$ is in fact a well-posed system node which is in addition \emph{impedance passive}, i.e.,
\begin{equation}\label{eq:imppasv}
	\re\Ipdp zx_{X_1} \leq \re\Ipdp fe_{X_2},\qquad \bbm{x\\e}\in\dom {A_{ext}},~ \bbm{z\\f}=A_{ext}\bbm{x\\e}.
\end{equation}
\end{proposition}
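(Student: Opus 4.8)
The plan is to prove the two assertions separately: that $A_{ext}$ is a well-posed system node (with state space $X_1$ and input and output space $X_2$), and that it satisfies \eqref{eq:imppasv}. The conceptual key is that $A_{ext}$ is the \emph{inverse} external Cayley system transform of the scattering-passive node $\SmallSysNode$ of Theorem \ref{thm:stwe}. Indeed, solving the equivalence \eqref{eq:caylandinv} for the impedance variables gives $e=(u+y)/\sqrt2$ and $f=(y-u)/\sqrt2$, so reconstructing $A_{ext}$ from $\SmallSysNode$ amounts to closing the static output loop $u=-y+\sqrt2\,e$: this is the feedback gain $K=-I$ applied to $\SmallSysNode$, together with the bounded input scaling $v=\sqrt2\,e$ and the bounded output correction $f=\sqrt2\,y-e$. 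Because $A_{ext}$ itself is in general \emph{not} a system node (Example \ref{ex:wavesysnode}), I would never apply feedback theory to $A_{ext}$ directly; instead I would apply it to the genuine passive node $\SmallSysNode$ and transport the conclusion back along these bounded, boundedly invertible transformations.

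First I would establish well-posedness. By Theorem \ref{thm:stwe}, $\SmallSysNode$ is passive, so $\|\Dfrak_0^T\|\le1$; condition 1 of Proposition \ref{prop:scattpass}, combined with the denseness of $\Uscr_0^T$ in $L^2([0,T];X_2)$ exactly as in that proposition's proof, sharpens this to $\|\Dfrak_0^T\|<1$ for some $T>0$. For $K=-I$ this yields $\|K\Dfrak_0^T\|=\|\Dfrak_0^T\|<1$, so Lemma \ref{lem:adm}.2 shows that $K=-I$ is an admissible static output feedback operator for $\SmallSysNode$, admissible even in the well-posed sense invoked in the proof of that lemma. Hence the closed-loop node $\sbm{A^f\&B^f\\C^f\&D^f}$ of Definition \ref{def:feedb} is a well-posed system node, and composing it with the bounded and boundedly invertible static maps $v=\sqrt2\,e$ and $f=\sqrt2\,y-e$ -- which preserve both the axioms of Definition \ref{def:sysnode} and the estimate \eqref{eq:scattpass} -- produces exactly the map $\sbm{x\\e}\mapsto\sbm{z\\f}$, namely $A_{ext}$. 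Therefore $A_{ext}$ is a well-posed system node. (Alternatively, one could reach well-posedness by flow inversion, \cite[Thm 6.3.9]{StafBook}, as noted after Proposition \ref{prop:caylalt}.)

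The step requiring the most care, and the main obstacle, is verifying that this chain of operations reproduces $A_{ext}$ \emph{on the nose}: that the domains match and that the main operator of the closed loop is the $S=I$ operator $A$ of $\SmallSysNode$ from Theorem \ref{thm:stwe}. I would check this by unwinding the defining relation \eqref{eq:closedloop} through \eqref{eq:sysnodecayl} and \eqref{eq:caylandinv}, in the same spirit as the computation \eqref{eq:feedbcalc}--\eqref{eq:sysnodeAK} that identified $A^f=A_S$ in the proof of Theorem \ref{thm:KadmConseq}. The delicate point is precisely that $A_{ext}$ is not a priori a system node, so one must route everything through $\SmallSysNode$ and confirm that the inverse transform lands on a bona fide well-posed node rather than on a merely closed operator.

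Finally, the impedance inequality \eqref{eq:imppasv} is the easy part and requires neither well-posedness nor condition 1. I would substitute the Cayley relations into the scattering-passivity estimate \eqref{eq:scattpassdiff} for $\SmallSysNode$: with $\sbm{z\\f}=A_{ext}\sbm{x\\e}$, $u=(e-f)/\sqrt2$ and $y=(e+f)/\sqrt2$, a direct expansion gives the identity $\|u\|^2-\|y\|^2=-2\re\Ipdp fe$, so that \eqref{eq:scattpassdiff} reduces to $\re\Ipdp zx\le-\re\Ipdp fe$. This is nothing but the dissipativity of $A_{ext}$ written in the transformed port variables, i.e.\ the impedance-passivity estimate \eqref{eq:imppasv}, and it holds for every $\sbm{x\\e}\in\dom{A_{ext}}$ by maximal dissipativity alone.
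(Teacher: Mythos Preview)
Your approach is essentially the paper's: use condition 1 to get $\|\Dfrak_0^T\|<1$, apply Lemma \ref{lem:adm} so that $K=-I$ is well-posed admissible for $\SmallSysNode$, recover $A_{ext}$ from the resulting well-posed closed-loop node via bounded, boundedly invertible input/output rescalings, and obtain the impedance inequality by substituting \eqref{eq:caylandinv} into \eqref{eq:scattpassdiff}. The only difference is presentational: the paper computes the closed-loop node explicitly from \eqref{eq:syscaylopchar} as $\sbm{A_1\\\frac{1}{\sqrt2}(A_2+\sbm{0&I})}\sbm{I&0\\0&\frac{1}{\sqrt2} I}$ and reads off the relation to $A_{ext}=\sbm{A_1\\A_2}$ directly (with the explicit constant $4M_T$ in \eqref{eq:scattpass}), whereas you describe the same transformation conceptually through $v=\sqrt2\,e$ and $f=\sqrt2\,y-e$.
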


\begin{proof}
The operator $\SmallSysNode$ is a well-posed system node by Theorem \ref{thm:stwe}. By Proposition \ref{prop:scattpass} it holds that $\|\Dfrak_0^T\|<1$ and by Lemma \ref{lem:adm}, $-I$ is then a well-posed-admissible static feedback operator \cite[Def.\ 7.1.1]{StafBook} of 
\begin{equation}\label{eq:caylnode}
	\bbm{\AB\\\CD} = \bbm{\sqrt 2\,A_1\\A_2+\bbm{0&I}} \left(\bbm{\sqrt2\,I&0\\0&I}-\bbm{0\\A_2}\right)^{-1};
\end{equation}
see Proposition \ref{prop:caylalt} (here $A_{ext}=\sbm{A_1\\A_2}$). Using Definition \ref{def:feedb}, we calculate the corresponding well-posed closed-loop system node by inserting \eqref{eq:caylnode} into \eqref{eq:Mdef}:
$$
	M = \bbm{\sqrt 2 I&0\\0&2I}\left( \bbm{\sqrt 2 I&0\\0&I}-\bbm{0\\A_2} \right)^{-1}.
$$
Using this and \eqref{eq:caylnode} in \eqref{eq:closedloop}, one then obtains
\begin{equation}\label{eq:closedloop2}
	\bbm{A^f&B^f\\C^f&D^f}=\bbm{A_1\\\displaystyle\frac{1}{\sqrt2}\left(A_2+\bbm{0&I}\right)}\bbm{I&0\\0&\frac{1}{\sqrt2}I}.
\end{equation}

It is now established that $\sbm{A^f&B^f\\C^f&D^f}$ satisfies the conditions in Definition \eqref{def:sysnode} and that for any fixed $T>0$ there exists an $M_T\geq0$, such that \eqref{eq:scattpass} holds for all trajectories of $\sbm{A^f&B^f\\C^f&D^f}$. We leave it for the reader to verify that this implies that $\sbm{A_1\\A_2}$ also satisfies the conditions  in Definition \eqref{def:sysnode} and that for the same $T$ and all trajectories of $\sbm{A_1\\A_2}$, the inequality \eqref{eq:scattpass} holds with $4M_T$ instead of $M_T$.

The inequality \eqref{eq:imppasv} is obtained by substituting $u=(e-f)/\sqrt2$ and $y=(e+f)/\sqrt2$ into \eqref{eq:scattpassdiff}, and this completes the proof.
\end{proof}

The preceding result was kindly pointed out to us by the anonymous referee. It says that Proposition \ref{prop:scattpass} is only applicable to well-posed systems. Here is furthermore an example showing that Proposition \ref{prop:scattpass} fails to cover the (well-posed) wave equation:

\begin{example}\label{ex:schronot}
Unfortunately, the external Cayley system transform \eqref{eq:waveextcayl}--\eqref{eq:waveextcaylDom} of the wave equation \eqref{eq:waveintro} does not satisfy \eqref{eq:unifscatt} for any $N_T<1$, because $\|\Dfrak_0^T\|=1$. 

Indeed, since $\Omega$ is a \emph{bounded} Lipschitz domain, we can choose a non-zero constant input signal $u(\xi,t):=u_0\in \R^n$ for all $t\geq0$ and almost every $\xi\in\Omega$. With this input signal and $x_0=0$ in \eqref{eq:wavetoheat}, we obtain that $\partial x(\xi,t)/\partial t=0$ for every $t\geq0$ and almost every $\xi\in\Omega$, and so the state stays at zero: $x(\cdot,t)=0$ in $L^2(\Omega)$ for all $t\geq0$. Hence the corresponding output is $y(\xi,t)=u(\xi,t)=u_0$ for all $t\geq0$ and almost every $\xi\in\Omega$. This implies that 
$$
  \int_0^T\|y(t)\|_{L^2(\Omega)^n}^2\ud t = \int_0^T\|u(t)\|_{L^2(\Omega)^n}^2\ud t = T\,\mathrm{vol}\,\Omega\,\|u_0\|^2_{\R^n}>0
$$
for all $T>0$, and so $N_T=1$ is the smallest possible choice in \eqref{eq:unifscatt} for all $T>0$.
\end{example}

\section{Wave equations with damping along the spatial domain}\label{sec:plates}

In this section we use the approach outlined in the introduction to show that the wave equation with viscous damping and the wave equation with structural damping, both with the damping along the spatial domain, are also associated to contraction semigroups. We shall make use of the following operators $A_{ext}$.

\begin{proposition}\label{prop:skewadjwave}
For a bounded Lipschitz domain $\Omega\subset\R^n$, the following operators are skew adjoint (and closed) on $L^2(\Omega)^{2n+1}$ and $L^2(\Omega)^{n+2}$, respectively:
\begin{equation}\label{eq:AextS}
\begin{aligned}
  A_{ext,s} :=& \bbm{0&\mathrm{div}\bbm{I&I}\\\bbm{I\\I}\mathrm{grad}&\bbm{0&0\\0&0}}\quad\text{with}\\
    \dom{A_{ext,s}} :=& \set{\bbm{x_1\\x_2\\e}\in\bbm{H^1_0(\Omega)\\L^2(\Omega)^n\\L^2(\Omega)^n}
      \bigmid x_2+e\in H^{\mathrm{div}}(\Omega)},\quad\text{and}
\end{aligned}
\end{equation}
\begin{equation}\label{eq:AextV}
  A_{ext,v} := \bbm{0&\mathrm{div}&I\\\mathrm{grad}&0&0\\-I&0&0}\quad\text{with}\quad
  \dom{A_{ext,v}} := \bbm{H^1_0(\Omega)\\H^{\mathrm{div}}(\Omega)\\L^2(\Omega)}.
\end{equation}
\end{proposition}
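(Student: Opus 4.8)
The plan is to reduce both claims to the already-known skew-adjointness of the undamped wave operator
$$
  W := \bbm{0 & \mathrm{div}\\ \mathrm{grad}&0}, \qquad
  \dom W = \bbm{H^1_0(\Omega)\\ H^{\mathrm{div}}(\Omega)},
$$
on $\sbm{L^2(\Omega)\\ L^2(\Omega)^n}$; this is precisely the operator in \eqref{eq:AextIntro}, whose skew-adjointness is recorded in the introduction (see \cite{parahyp_report,parahyp_divgrad}). Since every skew-adjoint operator equals the negative of its own adjoint, and adjoints are automatically closed, establishing skew-adjointness will immediately yield the closedness asserted in parentheses. Hence only skew-adjointness needs to be proved, and I would treat the two operators by two slightly different mechanisms.

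For $A_{ext,s}$ I would diagonalise the coupling between $x_2$ and $e$. Introduce the new variables $w_+ := (x_2+e)/\sqrt2$ and $w_- := (x_2-e)/\sqrt2$, leaving $x_1$ fixed; the map $\sbm{x_2\\e}\mapsto\sbm{w_+\\w_-}$ is unitary on $\sbm{L^2(\Omega)^n\\ L^2(\Omega)^n}$. A direct computation gives $\mathrm{div}(x_2+e)=\sqrt2\,\mathrm{div}\,w_+$, shows that the two identical gradient entries combine into $\sqrt2\,\mathrm{grad}\,x_1$ in the $w_+$-equation while the $w_-$-equation vanishes identically, and turns the domain constraint $x_2+e\in H^{\mathrm{div}}(\Omega)$ into $w_+\in H^{\mathrm{div}}(\Omega)$ with $w_-$ free in $L^2(\Omega)^n$. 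Thus in the new coordinates $A_{ext,s}$ becomes the orthogonal sum $\sqrt2\,W\oplus 0$, with $0$ the zero operator on $L^2(\Omega)^n$. Because multiplication by the positive scalar $\sqrt2$, orthogonal direct sums, and conjugation by a unitary all preserve skew-adjointness, and the zero operator is trivially skew-adjoint, $A_{ext,s}$ is skew-adjoint.

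For $A_{ext,v}$ I would instead split off a bounded perturbation. Writing out the action gives $A_{ext,v}\sbm{x_1\\x_2\\e}=\sbm{\mathrm{div}\,x_2+e\\ \mathrm{grad}\,x_1\\ -x_1}$, so $A_{ext,v}=(W\oplus 0)+B$, where $W\oplus 0$ acts by $W$ on $\sbm{x_1\\x_2}$ and by zero on $e\in L^2(\Omega)$; this orthogonal sum is skew-adjoint and has exactly $\dom{A_{ext,v}}$ as its domain. The remaining part $B\sbm{x_1\\x_2\\e}:=\sbm{e\\0\\-x_1}$ is bounded and everywhere defined, and its block matrix is skew-Hermitian, so $B$ is a bounded skew-adjoint operator. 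Since $B$ is bounded and everywhere defined, $\bigl((W\oplus 0)+B\bigr)^*=(W\oplus 0)^*+B^*=-(W\oplus 0)-B$ with unchanged domain, whence $A_{ext,v}$ is skew-adjoint as well.

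The only genuinely non-routine point is the bookkeeping of domains: one must verify that the unitary change of variables carries $\dom{A_{ext,s}}$ onto $\dom{W}\oplus L^2(\Omega)^n$, and that adding the bounded operator $B$ neither enlarges nor shrinks $\dom{W\oplus 0}$. Both are immediate once the formulas above are in place, so the entire substance of the proof is already contained in the established skew-adjointness of $W$.
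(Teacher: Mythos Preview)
Your proof is correct. For $A_{ext,v}$ you do exactly what the paper does: split off the bounded skew-adjoint piece $B$ and invoke $(Q+R)^*=Q^*+R^*$ for $R$ bounded. For $A_{ext,s}$, however, you take a genuinely different route. The paper computes the adjoint block by block: noting that $\dom{A_{ext,s}}$ factors as $H^1_0(\Omega)\times\dom{\mathrm{div}\sbm{I&I}}$, it applies the product-adjoint Lemma~\ref{lem:adjoints} to each off-diagonal block, obtaining $\bigl(\sbm{I\\I}\mathrm{grad}|_{H^1_0}\bigr)^*=-\mathrm{div}\sbm{I&I}$ (bounded $Q$ case) and $\bigl(\mathrm{div}\sbm{I&I}\bigr)^*=-\sbm{I\\I}\mathrm{grad}|_{H^1_0}$ (bounded surjective $R$ case). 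Your unitary rotation $w_\pm=(x_2\pm e)/\sqrt2$ sidesteps this lemma entirely by reducing to $\sqrt2\,W\oplus 0$; this is arguably cleaner and makes the structural reason for skew-adjointness transparent, at the cost of being specific to this particular coupling. The paper's approach, by contrast, is the one that generalises immediately to $A_{ext,vs}$ in \eqref{eq:AextVS} and motivates why Lemma~\ref{lem:adjoints} is included in the appendix.
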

\begin{proof}
By Theorem 6.2 in \cite{parahyp_report}, $\mathrm{grad}|_{H^1_0(\Omega)}^*=-\mathrm{div}|_{H^{\mathrm{div}}(\Omega)}$. Combining this with Lemma \ref{lem:adjoints} below, we obtain that
$$
\begin{aligned}
  A_{ext,s}^* &= \bbm{0&\mathrm{div}\bbm{I&I}\\\bbm{I\\I}\mathrm{grad}|_{H^1_0(\Omega)}&\bbm{0&0\\0&0}}^* \\
    &= \bbm{0&\left(\bbm{I\\I}\,\mathrm{grad}|_{H^1_0(\Omega)}\right)^*\\\left(\mathrm{div}\,\bbm{I&I}\right)^*&\bbm{0&0\\0&0}}\\
    &= \bbm{0&-\mathrm{div}\bbm{I&I}\\\bbm{I\\I}(-\mathrm{grad}|_{H^1_0(\Omega)})&\bbm{0&0\\0&0}} = - A_{ext,s},
\end{aligned}
$$ 
where we used that the diagonal blocks are zero operators and that the domain of $A_{ext,s}$ decomposes into the product of $\dom{\sbm{I\\I}\mathrm{grad}|_{H^1_0(\Omega)}}$ and $\dom{\mathrm{div}\bbm{I&I}}$.

We also have that $(Q+R)^*=Q^*+R^*$ if $R$ is bounded and everywhere defined. From this it immediately follows that 
$$
  A_{ext,v}=\bbm{0&\mathrm{div}&0\\\mathrm{grad}|_{H^1_0(\Omega)}&0&0\\0&0&0}
    +\bbm{0&0&I\\0&0&0\\-I&0&0}
$$
is skew-adjoint.
\end{proof}

We remark that \cite[Thm 6.2]{parahyp_report} allows a wide range of boundary conditions in addition to those used above for $A_{ext,v}$ and $A_{ext,s}$.

\subsection{Wave equations with viscous damping}\label{sec:viscous}

We first consider the wave equation with \emph{viscous damping} on a bounded Lipschitz domain $\Omega$:
\begin{equation}\label{eq:Wave1orderVisc}
  \left\{
    \begin{aligned}
        \rho(\xi)\frac{\partial^2 x}{\partial t^2} (\xi,t) &= 
	  \Div\big(T(\xi)\,\Grad x(\xi,t)\big)-k_v(\xi)\frac{\partial x}{\partial t}(\xi,t),
	  \quad \xi\in\Omega,~t\geq0,\\
	x(\xi,0) &= x_0(\xi),\quad \frac{\partial x(\xi,0)}{\partial t} = z_0(\xi),
	  \quad \xi\in\Omega,\\   
	\frac{\partial x(\xi,t)}{\partial t} &= 0,\quad \xi\in\partial\Omega,~t\geq0,
    \end{aligned}
  \right.
\end{equation}
where $x(\xi,t)$ is the deflection at point $\xi$ and time $t$, $\rho(\cdot)$ is the mass density, $T(\cdot)$ is Young's modulus, and $k_v(\cdot)$ is the scalar viscous damping coefficient. For physical reasons $\rho(\cdot),k_v(\cdot)\in L^\infty(\Omega)$ take real values and $T(\cdot)\in L^\infty(\Omega)^{n\times n}$ with $T(\xi)^*=T(\xi)$ for almost all $\xi\in\Omega$. We make the additional assumption that $\rho(\cdot)$, $T(\cdot)$, and $k_v(\cdot)$ are bounded away from zero, i.e., that there exists a $\delta>0$, such that $\rho(\xi)\geq\delta$, $k_v(\xi)\geq\delta$, and $T(\xi)\geq \delta I$ for almost all $\xi\in\Omega$. This implies that the operators of multiplication by $\rho(\cdot)$, $T(\cdot)$, and $k_v(\cdot)$ are self-adjoint, bounded, and uniformly accretive on $L^2(\Omega)$, $L^2(\Omega)^{n\times n}$, and $L^2(\Omega)$, respectively. 

The following multiplication operator is also bounded, everywhere defined, self-adjoint, and uniformly accretive on $X_1:=\sbm{L^2(\Omega)\\L^2(\Omega)^n}$:
\begin{equation}\label{eq:HscrDef}
  \Hscr x := \xi\mapsto\bbm{1/\rho(\xi)&0\\0&T(\xi)}x(\xi), \quad 
    \xi\in\Omega,~x\in X_1.
\end{equation}
This operator defines an alternative, but equivalent, inner product on $X_1$ through $\Ipdp{z_1}{z_2}_{\Hscr}:=\Ipdp{\Hscr z_1}{z_2}$, where $\Ipdp{\cdot}{\cdot}$ is the standard inner product on $\sbm{L^2(\Omega)\\L^2(\Omega)^n}$. We denote $X_1$ equipped with the inner product $\Ipdp{\cdot}{\cdot}_\Hscr$ by $X_\Hscr$, and by $X_1$ we mean $X_1$ equipped with the standard $L^2(\Omega)^{n+1}$-inner product.

We can write \eqref{eq:Wave1orderVisc} in the first-order abstract ODE form
\begin{equation}\label{eq:visc1ord}
\left\{\begin{aligned}
  \DD t\bbm{\rho(\cdot)\frac{\ud x(t)}{\ud t}\\\Grad x(t)} &= \bbm{0&\mathrm{div}\\\mathrm{grad}&0} \Hscr
    \bbm{\rho(\cdot)\frac{\ud x(t)}{\ud t}\\\Grad x(t)} + \bbm{I\\0}e(t),\\ 
    e(t) &= k_v(\cdot)\bbm{-I&0}\Hscr\bbm{\rho(\cdot)\frac{\ud x(t)}{\ud t}\\\Grad x(t)},
      \quad t\geq0,
\end{aligned}\right.
\end{equation}
whose state is $\sbm{\rho(\cdot)\frac{\ud x(t)}{\ud t}\\\Grad x(t)}$. The natural state space is $X_\Hscr:=\sbm{L^2(\Omega)\\L^2(\Omega)^n}$ (with the $\Hscr$-inner product induced by $\Hscr$ in \eqref{eq:HscrDef}).

Following Section 2 in \cite{ParaHyp}, we define $X_2:=L^2(\Omega)$, and and we choose $S_v$ to be the bounded and uniformly accretive multiplication operator 
$$
  S_vx:=\xi\mapsto k_v(\xi)\,x(\xi)\quad\text{on}\quad X_2.
$$
This allows us to rewrite \eqref{eq:visc1ord} as
\begin{equation}\label{eq:visc1ord2}
  \DD t\bbm{\rho(\cdot)\frac{\ud x(t)}{\ud t}\\\Grad x(t)} = A_{S,v}\Hscr
    \bbm{\rho(\cdot)\frac{\ud x(t)}{\ud t}\\\Grad x(t)},\quad t\geq0,
\end{equation}
where, using \eqref{eq:ASDef},
$$
  A_{S,v}=\bbm{-S_v&\Div\\\Grad&0}\quad\text{with}\quad \dom{A_{S,v}}=\bbm{H^1_0(\Omega)\\H^{\mathrm{div}}(\Omega)}.
$$ 
By the following result (see {\cite[Lem.\ 7.2.3]{JaZwBook}}), \eqref{eq:visc1ord2} is associated to a contraction semigroup on $X_\Hscr$ if and only if $A_{S,v}$ is maximal dissipative on $X_1$ (with the standard $L^2(\Omega)^{n+1}$-inner product):

\begin{lemma}\label{lem:energynorm}
Let $\Hscr$ be a bounded, self-adjoint, and uniformly accretive operator on a Hilbert space $X_1$. Then a linear operator $A$ generates a contraction semigroup (a unitary group) on $X_1$ if and only if the operator $A\Hscr$ with domain $\dom{A\Hscr} =\set{x\in X_1\mid \Hscr x\in\dom A}$ generates a contraction semigroup (unitary group) on $X_\Hscr$.
\end{lemma}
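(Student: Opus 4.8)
The plan is to verify the Lumer--Phillips conditions (Theorem \ref{thm:lumphi}) for $A\Hscr$ on $X_\Hscr$ directly, exploiting the fact that, since $\Hscr$ is bounded, self-adjoint and uniformly accretive, it is boundedly invertible and satisfies $\delta I\leq\Hscr\leq\norm{\Hscr}I$ for some $\delta>0$; in particular $\Hscr$ maps $\dom{A\Hscr}$ bijectively onto $\dom A$ and $\Ipdp{\cdot}{\cdot}_\Hscr$ is equivalent to the original inner product. The setting is moreover symmetric: $\Hscr^{-1}$ is bounded, self-adjoint and uniformly accretive when regarded as an operator on $X_\Hscr$, one has $A=(A\Hscr)\Hscr^{-1}$, and $X_1$ is exactly $X_\Hscr$ re-weighted by $\Hscr^{-1}$. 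Hence it suffices to prove one implication: if $A$ generates a contraction semigroup on $X_1$, then $A\Hscr$ generates one on $X_\Hscr$; the reverse implication follows by interchanging the roles of $(X_1,A)$ and $(X_\Hscr,A\Hscr)$.

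First I would establish the equivalence of dissipativity. For $x\in\dom{A\Hscr}$, writing $y:=\Hscr x\in\dom A$ and using the self-adjointness of $\Hscr$,
\begin{equation*}
  \re\Ipdp{A\Hscr x}{x}_\Hscr = \re\Ipdp{\Hscr A\Hscr x}{x} = \re\Ipdp{A\Hscr x}{\Hscr x} = \re\Ipdp{Ay}{y}.
\end{equation*}
Since $\Hscr$ maps $\dom{A\Hscr}$ onto $\dom A$, this shows that $A\Hscr$ is dissipative on $X_\Hscr$ if and only if $A$ is dissipative on $X_1$.

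The main work, and the principal obstacle, is the range condition in assertion 4 of Theorem \ref{thm:lumphi}: I must produce a single $\alpha\in\cplus$ with $\range{\alpha I-A\Hscr}=X_\Hscr$. Substituting $y=\Hscr x$ shows that $\range{\alpha I-A\Hscr}=\range{\alpha\Hscr^{-1}-A}$, so it is enough to prove that $\alpha\Hscr^{-1}-A$ is surjective for some real $\alpha>0$. Here I would treat $\alpha\Hscr^{-1}$ as a bounded, self-adjoint, positive perturbation of $-A$, using that $A$ generates a contraction semigroup, so that every real $\beta>0$ lies in $\res A$ with $\norm{(\beta I-A)^{-1}}\leq 1/\beta$. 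Factoring
\begin{equation*}
  \alpha\Hscr^{-1}-A = (\beta I-A)\bigl[I+(\beta I-A)^{-1}(\alpha\Hscr^{-1}-\beta I)\bigr],
\end{equation*}
I would choose $\beta>\alpha/\delta$; then the spectrum of $\alpha\Hscr^{-1}$ lies in $[\alpha/\norm{\Hscr},\alpha/\delta]$, so $\norm{\alpha\Hscr^{-1}-\beta I}=\beta-\alpha/\norm{\Hscr}$ and hence
\begin{equation*}
  \norm{(\beta I-A)^{-1}(\alpha\Hscr^{-1}-\beta I)} \leq \frac{\beta-\alpha/\norm{\Hscr}}{\beta} < 1.
\end{equation*}
A Neumann series then makes the bracketed factor a bijection of $X_1$ that preserves $\dom A$, while $\beta I-A$ is a bijection from $\dom A$ onto $X_1$; composing, $\alpha\Hscr^{-1}-A$ is a bijection from $\dom A$ onto $X_1$, which is the required range condition. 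With dissipativity and this range condition established, assertion 4 of Theorem \ref{thm:lumphi} yields that $A\Hscr$ generates a contraction semigroup on $X_\Hscr$.

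Finally, for the unitary-group statement I would apply the contraction-semigroup equivalence to both $A$ and $-A$. Since $(-A)\Hscr=-(A\Hscr)$, the operator $A$ generates a unitary group on $X_1$ — equivalently, both $A$ and $-A$ generate contraction semigroups on $X_1$ — if and only if both $A\Hscr$ and $-A\Hscr$ generate contraction semigroups on $X_\Hscr$, that is, if and only if $A\Hscr$ generates a unitary group on $X_\Hscr$.
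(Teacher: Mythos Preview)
Your argument is correct. The dissipativity transfer via $\re\Ipdp{A\Hscr x}{x}_\Hscr=\re\Ipdp{Ay}{y}$ with $y=\Hscr x$ is clean, the Neumann-series factorisation $\alpha\Hscr^{-1}-A=(\beta I-A)\bigl[I+(\beta I-A)^{-1}(\alpha\Hscr^{-1}-\beta I)\bigr]$ with $\beta>\alpha/\delta$ gives the range condition, and the symmetry observation (that $\Hscr^{-1}$ is bounded, self-adjoint and uniformly accretive on $X_\Hscr$, with $(X_\Hscr)_{\Hscr^{-1}}=X_1$) legitimately reduces the converse to the direction you prove. The unitary-group case via $\pm A$ is standard.

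As for comparison: the paper does not actually prove this lemma but simply quotes it from \cite[Lem.\ 7.2.3]{JaZwBook}, so there is no in-paper argument to compare against. Your Lumer--Phillips plus bounded-perturbation approach is a standard and self-contained route; the only minor remark is that the phrase ``preserves $\dom A$'' deserves one line of justification, which you can get from $(I+T)^{-1}w=w-T(I+T)^{-1}w$ together with $\range{T}\subset\dom A$.
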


Since $S_v$ is bounded and uniformly accretive and $A_{ext,v}^*=-A_{ext,v}$ by Proposition \ref{prop:skewadjwave}, $A_{S,v}$ is maximal dissipative on $X_1$; see Theorem \ref{thm:KadmConseq}. Therefore \eqref{eq:Wave1orderVisc} is governed by a contraction semigroup on $X_\Hscr$ in the following sense: The PDE \eqref{eq:Wave1orderVisc}
has a unique solution $x$ for every initial condition, and for this solution the family of mappings
$$
  \bbm{\rho(\cdot)z_0(\cdot)\\\Grad x_0(\cdot)} \mapsto \bbm{\rho(\cdot)\frac{\partial x}{\partial t}(\cdot,t)\\\Grad x(\cdot,t)},\quad t\geq0,
$$
is a contraction semigroup on $X_\Hscr$, cf.\ \eqref{eq:visc1ord2}.

It follows from Proposition \ref{prop:caylalt} that the external Cayley system transform of $A_{ext,v}$ is
\begin{equation}\label{eq:caylvisc}
\begin{aligned}
  \SmallSysNode_v &:= \bbm{\bbm{-I&\mathrm{div}\\\mathrm{grad}&0}\,\&\,\bbm{\sqrt2\,I\\0}\\
    \bbm{-\sqrt2\,I&0}\,\&\,I} \quad\text{with}\\
  \dom{\SmallSysNode_v} &:= \set{\bbm{\sqrt2\,x_1\\\sqrt2\,x_2\\e-\Grad x_1}
    \in\bbm{H^1_0(\Omega)\\H^{\mathrm{div}}(\Omega)\\L^2(\Omega)}\bigmid e\in L^2(\Omega)}.
\end{aligned}
\end{equation}

It is a consequence of the following result that $\|\Dfrak_0^T\|=1$ for the system node \eqref{eq:caylvisc}, and hence Proposition \ref{prop:scattpass} is not applicable to the wave equation with viscous damping:

\begin{proposition}\label{prop:DzeroTdecreases2}
For a well-posed system $\SmallSysNode$ with input space $U$ and output space $Y$, the following claims are true:
\begin{enumerate}
\item Let $D:U\to Y$ be bounded and let $\Lambda_D^T$ denote the bounded operator from $L^2([0,T];U)$ to $L^2([0,T];Y)$ of point-wise multiplication by $D$. If $\lim_{T\to0^+} \|\Lambda_D^T-\Dfrak_0^T\|=0$, where $\|\cdot\|$ denotes the norm of bounded linear operators from $L^2([0,T];U)$ to $L^2([0,T];Y)$, then $\|\Dfrak_0^T\|\geq \|D\|$ for all $T>0$.

\item Denote the state space of $\SmallSysNode$ by $X$ and assume that there exist bounded operators $B:U\to X$, $C:X\to Y$, and $D:U\to Y$, such that $\sbm{\AB\\\CD}=\sbm{A&B\\C&D}\big|_{\dom{\SmallSysNode}}$. Then there for every $T_0>0$ exists a constant $k_0\geq1$, such that $\|\Dfrak_0^T-\Lambda_D^T\|\leq k_0T$ for all $0<T\leq T_0$. In particular, Assertion (1) applies, so that $\|\Dfrak_0^T\|\geq\|D\|$.
\end{enumerate}
\end{proposition}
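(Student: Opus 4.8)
The plan is to establish assertion 1 first and then deduce assertion 2 from it by producing the required norm estimate.

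For assertion 1, the starting point is that $\Lambda_D^T$ has norm exactly $\|D\|$ for every $T>0$: the bound $\|\Lambda_D^T\|\leq\|D\|$ is immediate from $\|\Lambda_D^Tu\|_{L^2}^2=\int_0^T\|Du(t)\|^2\ud t\leq\|D\|^2\|u\|_{L^2}^2$, while testing on constant inputs $u(t)\equiv u_0$ (with $\|Du_0\|/\|u_0\|$ close to $\|D\|$) recovers the reverse inequality. The triangle inequality then gives $\|\Dfrak_0^T\|\geq\|\Lambda_D^T\|-\|\Lambda_D^T-\Dfrak_0^T\|=\|D\|-\|\Lambda_D^T-\Dfrak_0^T\|$, so the hypothesis forces $\liminf_{T\to0^+}\|\Dfrak_0^T\|\geq\|D\|$. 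Finally I invoke Proposition \ref{prop:DzeroTdecreases1}: since $T\mapsto\|\Dfrak_0^T\|$ is non-decreasing, for any fixed $T>0$ and every $0<T'\leq T$ we have $\|\Dfrak_0^T\|\geq\|\Dfrak_0^{T'}\|$, and letting $T'\to0^+$ yields $\|\Dfrak_0^T\|\geq\|D\|$, as claimed.

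For assertion 2, the idea is to identify $\Dfrak_0^T-\Lambda_D^T$ with an explicit Volterra operator and estimate it directly. Because $B$, $C$, $D$ are bounded, a classical trajectory $(u,x,y)$ with $x(0)=0$ satisfies the ordinary state equations $\dot x(t)=Ax(t)+Bu(t)$ and $y(t)=Cx(t)+Du(t)$, where $A$ is the main operator generating the $C_0$-semigroup $e^{At}$. The variation-of-constants formula then gives $x(t)=\int_0^t e^{A(t-s)}Bu(s)\ud s$, whence $(\Dfrak_0^Tu)(t)-(\Lambda_D^Tu)(t)=C\int_0^t e^{A(t-s)}Bu(s)\ud s$ for every $u$ in the dense subspace $\Uscr_0^T$ of \eqref{eq:classinputdense}; since both sides are bounded on $L^2([0,T];U)$, the identity extends to all inputs. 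Setting $M:=\sup_{0\leq t\leq T_0}\|e^{At}\|<\infty$ (finite by strong continuity on the compact interval), I estimate pointwise $\|(\Dfrak_0^Tu-\Lambda_D^Tu)(t)\|\leq\|C\|\,\|B\|\,M\int_0^t\|u(s)\|\ud s$, bound $\int_0^t\|u(s)\|\ud s\leq\sqrt{T}\,\|u\|_{L^2}$ by Cauchy--Schwarz, and square-integrate over $[0,T]$ to obtain $\|\Dfrak_0^T-\Lambda_D^T\|\leq k_0T$ with $k_0:=\max\{1,\|C\|\,\|B\|\,M\}$. This proves the estimate, gives $\lim_{T\to0^+}\|\Dfrak_0^T-\Lambda_D^T\|=0$, and hence assertion 1 yields $\|\Dfrak_0^T\|\geq\|D\|$.

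The computations are all routine; the one point needing genuine care is the representation of $\Dfrak_0^T$ by the variation-of-constants formula. Here I must use that boundedness of $B$ makes the classical solution of $\dot x=Ax+Bu$ coincide with the mild one (so the integral formula is legitimate), and that the resulting identity, valid a priori only on the dense set $\Uscr_0^T$, extends to all of $L^2([0,T];U)$ by boundedness of the operators involved. I expect this to be the main, though minor, obstacle; everything downstream is a direct $L^2$ norm estimate.
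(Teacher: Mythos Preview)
Your proposal is correct and follows exactly the approach the paper indicates: the paper only sketches the proof, saying that assertion 1 comes from the triangle inequality and assertion 2 from a standard convolution estimate on the variation-of-constants formula, and that is precisely what you carry out (including the needed appeal to Proposition~\ref{prop:DzeroTdecreases1} for the monotonicity of $T\mapsto\|\Dfrak_0^T\|$). Your fleshed-out estimates are accurate and the density argument via $\Uscr_0^T$ is the right way to pass from classical trajectories to all of $L^2([0,T];U)$.
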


One uses the triangle inequality to establish the first assertion and the second assertion is proved by using a standard convolution estimate on the variation of constants formula.

\subsection{Structural damping}

Using exactly the same argument as in Section \ref{sec:viscous}, we can prove that the wave equation with \emph{structural damping},
\begin{equation}\label{eq:Wave1orderStruct}
  \left\{
    \begin{aligned}
        \rho(\xi)\frac{\partial^2 x}{\partial t^2} (\xi,t) &= 
	  \Div\big(T(\xi)\,\Grad x(\xi,t)\big)+\Div\Big(k_s(\xi)\,\Grad \frac{\partial x}{\partial t}(\xi,t)\Big),
	\\
	x(\xi,0) &= x_0(\xi),\quad \frac{\partial x(\xi,0)}{\partial t} = z_0(\xi),
	  \quad \xi\in\Omega,\\   
	\frac{\partial x(\xi,t)}{\partial t}&= 0,\quad \xi\in\partial\Omega,~t\geq0,
    \end{aligned}
  \right.
\end{equation}
is also associated to a contraction semigroup on $X_\Hscr$. We make the same assumptions on $\rho(\cdot)$ and $T(\cdot)$ as in \eqref{eq:Wave1orderVisc}, so that $\Hscr$ in \eqref{eq:HscrDef} again defines the inner product of a Hilbert space $X_\Hscr$. Moreover, we assume that $k_s(\cdot)\in L^\infty(\Omega)^{n\times n}$ satisfies $k_s(\xi)+k_s(\xi)^*\geq\delta I$ for some $\delta>0$ and almost every $\xi\in\Omega$, so that the multiplication operator 
$$
  S_sx:=\xi\mapsto k_s(\xi)\,x(\xi)\quad\text{on}\quad X_2:=L^2(\Omega)^n
$$ 
is bounded, everywhere defined, and uniformly accretive. As extended operator we use $A_{ext,s}$ in \eqref{eq:AextS}, and we can use Theorem \ref{thm:KadmConseq} and Lemma \ref{lem:energynorm} to conclude that \eqref{eq:Wave1orderStruct} is governed by a contraction semigroup on $X_\Hscr$. The corresponding operator $A_S$ is
$$
\begin{aligned}
  A_{S,s} &= \bbm{\Div\bbm{S_s\,\Grad&I}\\\bbm{\Grad&0}}, \\
  \dom{A_{S,s}} &= \set{\bbm{x_1\\x_2}\in\bbm{H^1_0(\Omega)\\L^2(\Omega)^n}\bigmid S_s\,\Grad x_1+x_2\in H^{\mathrm{div}}(\Omega)}.
\end{aligned}
$$ 

By Proposition \ref{prop:caylalt}, the external Cayley system transform of $A_{ext,s}$ is 
$$
\begin{aligned}
  \SmallSysNode_s &:=\bbm{\bbm{\Delta&\mathrm{div}\\\mathrm{grad}&0}\,\&\,\bbm{\sqrt2\,\mathrm{div}\\0}\\
    \bbm{\sqrt2\,\mathrm{grad}&0}\,\&\,I}\quad\text{with}\\
  \dom{\SmallSysNode_s} &:=\left\{\bbm{\sqrt2\,x_1\\\sqrt2\,x_2\\e-\Grad x_1}\in\bbm{H^1_0(\Omega)\\L^2(\Omega)\\L^2(\Omega)}\biggmid \right.\\
    &\qquad\qquad\qquad e\in L^2(\Omega),~ x_2+e\in H^{\mathrm{div}}(\Omega)\bigg\}.
\end{aligned}
$$
Hence the main operator $A$ is given by (see \eqref{eq:Adef})
$$
\begin{aligned}
  A\bbm{x_1\\x_2} &= \bbm{\mathrm{div}(\Grad x_1+x_2)\\\Grad x_1},\\
  \dom A &= \set{\bbm{x_1\\x_2}\in\bbm{H^1_0(\Omega)\\L^2(\Omega)}\bigmid \Grad x_1+x_2\in H^{\mathrm{div}}(\Omega)}.
\end{aligned}
$$
Here the control and observation operators are unbounded, so Proposition \ref{prop:DzeroTdecreases2} is not applicable. However, the technique in Example \ref{ex:schronot} can easily be adapted to show that $\|\Dfrak_0^T\|=1$ also in this case, so application of Proposition \ref{prop:scattpass} is excluded.

One can also treat wave equations with both viscous and structural damping. Indeed, from the proof of Proposition \ref{prop:skewadjwave} it follows that the operator
\begin{equation}\label{eq:AextVS}
\begin{aligned}
  A_{ext,vs} :=& \bbm{0&\mathrm{div}\bbm{I&I}&I\\\bbm{I\\I}\mathrm{grad}&\bbm{0&0\\0&0}&\bbm{0\\0}\\-I&\bbm{0&0}&0}, \\
    \dom{A_{ext,vs}} :=& \set{\bbm{x_1\\x_2\\e_1\\e_2}\in
      \bbm{H^1_0(\Omega)\\L^2(\Omega)^n\\L^2(\Omega)^n\\L^2(\Omega)}
      \bigmid x_2+e_1\in H^{\mathrm{div}}(\Omega)},
\end{aligned}
\end{equation}
is skew-adjoint (in particular closed) on $L^2(\Omega)^{2n+2}$. This operator can be associated to a wave equation with both viscous and structural damping by defining $S_{vs}$ to be the operator of multiplication by $\sbm{k_s(\cdot)&0\\0&k_v(\cdot)}$ on $\sbm{L^2(\Omega)^n\\L^2(\Omega)}$. From here we can, however, not immediately deduce that the PDEs \eqref{eq:Wave1orderVisc} and \eqref{eq:Wave1orderStruct} are associated to contraction semigroups by setting $k_v(\cdot):=0$ or $k_s(\cdot):=0$, because $S_{vs}$ is no longer uniformly accretive in that case.

\section{Degenerate parabolic equations}\label{sec:degenerate}

In \cite{ParaHypConf} it is shown how well-posedness of the heat equation (\ref{eq:heatND}) can be obtained from the well-posedness of the associated wave equation
(\ref{eq:waveintro}) by means of Theorem \ref{thm:parahypconf}. In
this section we show that Theorem \ref{thm:KadmConseq} allows this
same approach to be extended to degenerate parabolic PDEs, see e.g.\
\cite{CaMePa98,EnNa00,PazyBook}. In a degenerate parabolic equation
the physical parameter, such as $\alpha$ in equation (\ref{eq:heatND}), may
become zero at the boundary of the spatial domain.

Let $H^{\mathrm{div}}_0(\Omega)$ denote the closure in
  $H^{\mathrm{div}}(\Omega)$ of the set of all functions in $C^\infty(\Omega)^n$
  with support contained in the open set
  $\Omega$. This equals the set of all functions in
  $H^{\mathrm{div}}(\Omega)$ for which the normal trace map is
  zero; see \cite[Thm 5.4.2]{parahyp_report} or \cite[Thm I.2.6]{GiRaBook}. Let $K$ be a linear operator which maps
  $H^{\mathrm{div}}_0(\Omega)$ boundedly into $U$, where $U$ is any Hilbert space. In addition assume that the operator $\sbm{\mathrm{div}\\-K}$ with domain $H^{\mathrm{div}}_0(\Omega)$ is closed as an unbounded operator $L^2(\Omega)^n\to\sbm{L^2(\Omega)\\U}$.

Now set $H:=L^2(\Omega)$, $E:=L^2(\Omega)^n$, $E_0:=H^{\mathrm{div}}_0(\Omega)$, $L:=-\mathrm{div}\big|_{E_0}$, $G:=0$. Denoting the dual of $E_0$ with pivot space $L^2(\Omega)^n$ by $E_0'$, we obtain that $L^*=\mathrm{grad}: L^2(\Omega)\to E_0'$ is bounded. It follows from \cite[Thm 1.1]{StWe12b} and Definition \ref{def:sysnode} that the following operator generates a contraction semigroup on $\sbm{L^2(\Omega)\\L^2(\Omega)^n}$:
\begin{equation}
    \label{eq:9}
\begin{aligned}
    A_{ext} &= \left[ \begin{matrix} 0 & \mathrm{div} \\  \mathrm{grad} &-K^*K \end{matrix}\right]
\qquad\text{with domain} \\
  \dom{A_{ext}} &= \left\{ \left[ \begin{matrix} x_1\\ x_2 \end{matrix} \right] \in \left[ \begin{matrix} L^2(\Omega)\\ H^{\mathrm{div}}_0(\Omega) \end{matrix} \right] \bigmid \mathrm{grad}\ x_1 -K^*K \, x_2 \in L^2(\Omega)^n\right\}.
\end{aligned}
\end{equation}

Next we apply Theorem \ref{thm:KadmConseq} with $S$ bounded on $E=L^2(\Omega)^n$ (satisfying the conditions of item 3) to $A_{ext}$. We find that $A_S$ generates a contraction semigroup on $L^2(\Omega)$, where $A_S$ is the mapping from $x_1$ to $z_1$ in
\begin{align}
  \label{eq:9.6}
  \left\{\begin{aligned} z_1&=\Div x_2 \\ z_2&=\Grad x_1-K^*Kx_2 \\ x_2&=S z_2 \end{aligned}\right.\quad\iff\quad
  \left\{\begin{aligned} z_1&=\Div x_2 \\ (S^{-1} +K^*K)x_2&=\Grad x_1 \\ z_2&=S^{-1}x_2 \end{aligned}\right..
\end{align}
Since $E_0'$ is the dual of $E_0$ with pivot space $E$, we can regard $S^{-1}$ as a bounded mapping from $E_0$ into $E_0'$ in \eqref{eq:9.6}.  Furthermore, for $x_2\in E$ we have by item 3 of Theorem \ref{thm:KadmConseq} that, with $\tilde{x}_2=S^{-1}x_2$,
$$
	\re\Ipdp{S^{-1}x_2}{x_2}_{E} = \re\Ipdp{ \tilde{x}_2}{S\tilde{x}_2}_{E}  \geq \delta\,\|\tilde{x}_2\|^2_E \geq  \tilde\delta\,\|x_2\|^2_E.
$$
Thus in particular, the operator $S^{-1} +K^*K$ is injective. Hence, \eqref{eq:9.6} is solvable, i.e., $x_1\in\dom{A_S}$ and $z_1=A_Sx_1$, if and only if 
$$
\begin{aligned}
  x_1\in L^2(\Omega),\quad \Grad x_1\in (S^{-1}+ K^*K)\,H^{\mathrm{div}}_0(\Omega),\quad \text{and}\\
  z_1=\mathrm{div} \left( \left( S^{-1}+ K^*K \right)^{-1} \mathrm{grad} \, x_1 \right);
\end{aligned}
$$
indeed then also $x_1$ and 
$$
	x_2=\left( S^{-1}+ K^*K \right)^{-1} \mathrm{grad} \, x_1\in H_0^{\mathrm{div}}(\Omega)
$$ 
satisfy $\sbm{x_1\\x_2}\in\dom{A_{ext}}$, since
$$
	\Grad x_1-K^*Kx_2=z_2=S^{-1}x_2\in S^{-1} H^{\mathrm{div}}_0(\Omega)\subset L^2(\Omega).
$$
We conclude by Theorem \ref{thm:KadmConseq} that 
\begin{equation}
  \label{eq:2}
  A_S =  \mathrm{div} \left( S^{-1}+ K^*K \right)^{-1} \mathrm{grad} 
\end{equation}
with domain
\begin{equation}
  \label{eq:3}
  \dom {A_S} = \left\{ x \in L^2(\Omega) \mid \mathrm{grad}\, x \in
    (S^{-1} + K^*K)\,H^{\mathrm{div}}_0(\Omega) \right\}
\end{equation}
generates a contraction semigroup on $L^2(\Omega)$. Here
  the multiplication by $\alpha$ in (\ref{eq:heatND}) has been replaced
  by the operator $(S^{-1}+ K^*K)^{-1}$. This makes it possible to treat
  the degenerate case, as we make explicit in the next example.

The boundary condition on the operator $A_S$ in equation
  (\ref{eq:2}) and (\ref{eq:3}) is that the normal trace of $\left( S^{-1}+ K^*K
  \right)^{-1} \mathrm{grad}\,x$ should be zero along all of the boundary, and this case is technically rather simple to deal with. To illustrate how more challenging boundary conditions (where different parts of the boundary are coupled) can be handled, we take a one-dimensional spatial domain.
  
We set $\beta(\xi) :=
\xi^{-\alpha}$, $\xi\in(0,1)$, with $\alpha \in (0,1)$. Then the corresponding multiplication operator $K=M_\beta$ maps $E_0:=\set{x\in H^1(0,1)\mid x(0)=0}$ with the $H^1(0,1)$ norm into $L^2(0,1)$, because
$$
  \left|\beta(\xi)x(\xi)\right|=\beta(\xi)\left|\int_0^\xi 1\cdot x'(\tau)\ud\tau\right|\leq \beta(\xi) \sqrt\xi \, \|x'\|_{L^2(0,1)}
  \leq \beta(\xi) \sqrt\xi \, \|x\|_{E_0}
$$
by Cauchy-Schwartz, and $\int_0^1\big(\beta(\xi)
\sqrt\xi\big)^2\ud\xi=\frac{1}{2-2\alpha}$. Hence the norm of
$M_\beta$ is bounded by $\frac{1}{\sqrt{2-2\alpha}}$, and $M_\beta^*$
is multiplication by $\overline\beta=\beta$, mapping $L^2(0,1)$
continuously into the dual $E_0'$ of $E_0$ with pivot space
$L^2(0,1)$.

Take $\kappa>0$ arbitrarily and observe that $x_1'+\beta e\in L^2(0,1)$ and $\beta\big|_{(\frac12,1)}$ bounded implies that $x_1'=(z-\beta e)\big|_{(\frac12,1)}\in L^2(\frac12,1)$. Hence $x_1\big|_{(\frac12,1)}\in H^1(\frac12,1)$ and $x_1(1)$ is well-defined. We leave it to the reader to verify that the (unbounded) adjoint of the operator
\begin{equation}
  \label{eq:1}
   A_{ext,0}= \left[ \begin{matrix} 0 & \frac{\partial}{\partial \xi}  &0 \\ \frac{\partial}{\partial \xi} &0& M_\beta^* \\ 0 &-M_\beta &0 \end{matrix} \right]
\end{equation}
with domain
$$
\begin{aligned}
  \dom{A_{ext,0}}= \left\{ \left[ \begin{matrix} x_1\\ x_2\\e \end{matrix} \right] \in L^2(0,1)^3 \right. \bigmid &\  x_2 \in H^1(0,1),~x'_1 + \beta e \in L^2(0,1), \\
     &   x_2(0)=0,~ x_1(1) =-\kappa x_2(1) \bigg\}
\end{aligned}
$$
$$
  \text{is}\qquad
   A_{ext,0}^*= -\left[ \begin{matrix} 0 & \frac{\partial}{\partial \xi}  &0 \\   \frac{\partial}{\partial \xi} &0& M_\beta^* \\ 0 &-M_\beta &0 \end{matrix} \right]\qquad\text{with domain}
$$
$$
\begin{aligned}
  \dom{A_{ext,0}^*}= \left\{ \left[ \begin{matrix} x_1\\ x_2\\e \end{matrix} \right] \in L^2(0,1)^3 \right. \bigmid &\   x_2 \in
      H^1(0,1),~ x'_1 + \beta e \in L^2(0,1),\\
     & \quad x_2(0)=0, ~ x_1(1) =\kappa x_2(1) \bigg\}.
\end{aligned}
$$
A main step in this verification is showing that $z_1\big|_{[a,1]}\in H^1(a,1)$ for all $a\in(0,1)$ whenever $(z_1,z_2,h)\in\dom{A_{ext,0}^*}$, which again follows from the boundedness of $\beta$ on every interval $[a,1]$, $a\in(0,1)$. Since both
  $A_{ext,0}$ and $A_{ext,0}^*$ are closed and dissipative, $A_{ext,0}$ is the generator of a contraction semigroup on $L^2(\Omega)^3$.

Applying Theorem \ref{thm:parahypconf} to the operator in \eqref{eq:1}
with $S=I$, we obtain
\begin{equation}
  \label{eq:4}
  A_{S,0} = \left[ \begin{matrix} 0 & \frac{\partial}{\partial \xi}  &0 \\ \frac{\partial}{\partial \xi} &0& M_\beta^* \end{matrix} \right]
  \left[ \begin{matrix} I&0 \\ 0&I \\ 0 & -M_\beta \end{matrix} \right]
  =\left[ \begin{matrix} 0 & \frac{\partial}{\partial \xi} \\ \frac{\partial}{\partial \xi} & -M_\beta^* M_\beta \end{matrix} \right],
\end{equation}
with domain
$$
\begin{aligned}
   \dom{A_{S,0}}= \left\{ \left[ \begin{matrix} x_1\\ x_2 \end{matrix} \right] \in  \left[ \begin{matrix} L^2(0,1) \\ H^1(0,1) \end{matrix} \right] \bigmid\right. & 
   x'_1 - \beta^2 x_2 \in L^2(0,1), \\ &\quad x_2(0)=0,~ x_1(1) =-\kappa x_2(1)\bigg\}.
\end{aligned}
$$
Note that the operator (\ref{eq:4}) is of a similar form as the
  operator in \eqref{eq:9}, but now the boundary conditions on $x_1$ and $x_2$ are coupled at $\xi=1$. By Theorem \ref{thm:parahypconf}, $A_{S,0}$ generates a contraction semigroup on $L^2(\Omega)^2$. 

We next apply Theorem \ref{thm:KadmConseq} to the operator $A_{ext}:=A_{S,0}$ with $S=M_{s}$, i.e., multiplication by the function $s$. The calculations here are the same as in the $n$-D case above, and the result is
\begin{equation}\label{eq:AS1beta}
\begin{aligned}
  A_{S,1}x =& \frac{\partial}{\partial \xi} \left(
    \frac{1}{s^{-1}(\xi)+\beta(\xi)^2} \frac{\partial x}{\partial \xi} \right)\quad\text{with domain} \\
  \dom{A_{S,1}}= & \bigg\{ x \in L^2(0,1) \bigmid  \frac{1}{s^{-1}+\beta^2} x' \in H^1(0,1), \\
  &\quad \left.  \left(\frac{1}{s^{-1}+\beta^2} x'\right)(0)=0,~ 
    x(1) = -\kappa \left(\frac{1}{s^{-1}+\beta^2} x'\right)(1)\right\}.
\end{aligned}
\end{equation}
Using the expression $\beta(\xi)=\xi^{-\alpha}$, this becomes
$$
\begin{aligned}
  A_{S,1}x &= \frac{\partial }{\partial \xi} \left(
   \frac{s(\xi) \xi^{2\alpha}}{1+ s(\xi)\xi^{2\alpha}}\, x'(\xi) \right)
  \qquad\text{with domain}\\
  \dom{A_{S,1}}&= \left\{ x \in L^2(0,1) \bigmid\ \frac{s(\xi) \xi^{2\alpha}}{1+ s(\xi) \xi^{2\alpha}}\,  x'(\xi) \in H^1(0,1),\right.\\
	  &\qquad\qquad\qquad \left.\left( s(\xi) \xi^{2\alpha}\, x'(\xi)\right)(0)=0,~x(1) = -\kappa\frac{s(1)}{1+s(1)} x'(1)\right\}.
\end{aligned}
$$
Here the thermal diffusivity $s(\xi) \xi^{2\alpha}\,(1+ s(\xi)\xi^{2\alpha})^{-1}$ becomes zero at $\xi=0$.

This way any thermal diffusivity that can be written as $\tilde{s}\beta^{-2}$ with $\tilde{s}$ positive, bounded and bounded away from zero can be captured. We leave it for future work to extend the situation with mixed boundary conditions to the $n$-D case.

\section{Acknowledgements}

The authors gratefully acknowledge that the anonymous referee has been most helpful with improving the manuscript.

\def\cprime{$'$} \def\cprime{$'$}
\providecommand{\bysame}{\leavevmode\hbox to3em{\hrulefill}\thinspace}
\providecommand{\MR}{}
\providecommand{\MRhref}[2]{}{}

\providecommand{\href}[2]{#2}

\appendix

\section{A lemma on unbounded adjoints}\label{app:adjoints}

The following result must be well-known in the literature, but we could not find a suitably formulated reference:

\begin{lemma}\label{lem:adjoints}
Let $H$, $K$, and $L$ be Hilbert spaces, and let $Q:K\to L$ and $R:H\to K$ be possibly unbounded operators. If $Q$ is bounded, or if $R$ is bounded and surjective, then $(QR)^*=R^*Q^*$.
\end{lemma}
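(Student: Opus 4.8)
The plan is to establish the two inclusions $R^*Q^* \subseteq (QR)^*$ and $(QR)^* \subseteq R^*Q^*$ separately. First I would record that in both situations all three operators $Q$, $R$, and $QR$ are densely defined, so that the adjoints are meaningful: in the first case $\dom{QR}=\dom R$, while in the second, surjectivity of the bounded map $R$ together with the open mapping theorem shows that the preimage $R^{-1}(\dom Q)$ of the dense set $\dom Q$ is dense in $H$. The inclusion $R^*Q^* \subseteq (QR)^*$ then holds with no further hypothesis: for $y\in\dom{R^*Q^*}$ (so $y\in\dom{Q^*}$ and $Q^*y\in\dom{R^*}$) and any $h\in\dom{QR}$ one moves first $Q$ and then $R$ across the inner product,
$$
  \Ipdp{QRh}{y} = \Ipdp{Rh}{Q^*y} = \Ipdp{h}{R^*Q^*y},
$$
which shows that $y\in\dom{(QR)^*}$ with $(QR)^*y = R^*Q^*y$. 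It remains to prove the reverse inclusion under either hypothesis.

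When $Q$ is bounded the reverse inclusion is immediate. Since $\dom Q = K$ we have $\dom{QR}=\dom R$, and $Q^*$ is everywhere defined; for any $y$ and any $h\in\dom R$ boundedness of $Q$ gives $\Ipdp{QRh}{y}=\Ipdp{Rh}{Q^*y}$. Hence $y\in\dom{(QR)^*}$ with $(QR)^*y=w$ if and only if $\Ipdp{Rh}{Q^*y}=\Ipdp{h}{w}$ for all $h\in\dom R$, which is exactly the assertion that $Q^*y\in\dom{R^*}$ and $R^*Q^*y=w$. Thus $\dom{(QR)^*}=\dom{R^*Q^*}$ and the operators coincide.

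The case where $R$ is bounded and surjective is the substantive one, and the hard part is showing that $(QR)^*$ is no larger than $R^*Q^*$, i.e.\ recovering the action of $Q^*$ from that of $(QR)^*$. Here $R^*$ is bounded and everywhere defined, and surjectivity of $R$ forces $R^*$ to be injective with closed range $\range{R^*}=(\Ker R)^{\perp}$ (closed range theorem). Fix $y\in\dom{(QR)^*}$ and set $w:=(QR)^*y$. First I would check that $w\perp\Ker R$: since $\dom{QR}+\Ker R\subseteq\dom{QR}$, every $n\in\Ker R$ equals $h_1-h_2$ for some $h_1,h_2\in\dom{QR}$ with $Rh_1=Rh_2$, whence $QRh_1=QRh_2$ and
$$
  \Ipdp{n}{w}=\Ipdp{h_1-h_2}{w}=\Ipdp{QRh_1}{y}-\Ipdp{QRh_2}{y}=0.
$$
Therefore $w\in(\Ker R)^{\perp}=\range{R^*}$, so $w=R^*v$ for a unique $v\in K$. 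Finally, for arbitrary $k\in\dom Q$ I would use surjectivity to pick $h$ with $Rh=k$ (then $h\in\dom{QR}$) and compute
$$
  \Ipdp{Qk}{y}=\Ipdp{QRh}{y}=\Ipdp{h}{w}=\Ipdp{h}{R^*v}=\Ipdp{Rh}{v}=\Ipdp{k}{v}.
$$
Since $Q$ is densely defined this yields $y\in\dom{Q^*}$ with $Q^*y=v\in K=\dom{R^*}$, and hence $R^*Q^*y=R^*v=w=(QR)^*y$, which gives the reverse inclusion and completes the proof. The only genuinely delicate point is the use of the closed range theorem to solve $w=R^*v$; everything else is bookkeeping with the defining identities of the adjoints.
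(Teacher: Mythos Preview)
Your proof is correct and follows essentially the same route as the paper: establish the easy inclusion $R^*Q^*\subseteq(QR)^*$, then for the case of bounded surjective $R$ use the closed range theorem to write $w=(QR)^*y$ as $R^*v$ and recover $v=Q^*y$ via surjectivity. Your version is in fact more careful than the paper's, since you explicitly address the density of $\dom{QR}$ needed to make the adjoints well defined; the paper silently assumes this. Your argument that $w\perp\Ker R$ via $n=h_1-h_2$ is a slight elaboration of the paper's (which simply notes that $\Ker R\subset\dom{QR}$ and $QRn=0$ for $n\in\Ker R$), but the content is identical.
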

\begin{proof}
The proof for the case where $Q$ is bounded is trivial. Moreover, the inclusion $R^*Q^*\subset (QR)^*$ always holds for linear operators $Q$ and $R$, as one easily shows. We finish the proof by showing that if $R$ is bounded and surjective, then the converse inclusion also holds.

Assume that there exists a $w$ such that $\Ipdp{QRx}{z}=\Ipdp{x}{w}$ for all $x\in\dom{QR}$. Then in particular $0=\Ipdp{x}{w}$ for all $x\in\Ker{R}$, so that $w\in\Ker R^\perp=\range{R^*}$, since $R^*$ has closed range by the Closed Range Theorem. Writing $w=R^*v$, we thus obtain that $\Ipdp{QRx}{z}=\Ipdp{x}{R^*v}=\Ipdp{Rx}{v}$ for all $Rx\in\dom{Q}$, again using the boundedness and surjectivity of $R$. Therefore $z\in\dom{Q^*}$ and $Q^*z=v$.

Hence $z\in\dom{(QR)^*}$ and $w=(QR)^*z$ imply $z\in\dom{R^*Q^*}$ and $w=R^*Q^*z$, i.e., that $(QR)^*\subset R^*Q^*$. 
\end{proof}

\end{document}